\DeclareMathAlphabet{\mathbbold}{U}{bbold}{m}{n}	
\theoremstyle{plain}
\newtheorem{theorem}{Theorem}[section]
\newtheorem{corollary}[theorem]{Corollary}
\newtheorem{lemma}[theorem]{Lemma}
\newtheorem{proposition}[theorem]{Proposition}
\theoremstyle{definition}
\newtheorem{definition}[theorem]{Definition}
\theoremstyle{remark}
\newtheorem{remark}[theorem]{Remark}
\numberwithin{equation}{section}
\setlist[itemize]{leftmargin=1cm}
\DeclareMathOperator{\tr}{\mathrm{Tr}}
\DeclareMathOperator{\supp}{\mathrm{supp}}
\def\fh{\mathfrak{h}}
\def\fg{\mathfrak{g}}
\def\fv{\mathfrak{v}}
\def\fh{\mathfrak{h}}
\def\cB{\mathcal{B}}
\def\cC{\mathcal{C}}
\def\cD{\mathcal{D}}
\def\cE{\mathcal{E}}
\def\cF{\mathcal{F}}
\def\cL{\mathcal{L}}
\def\cM{\mathcal{M}}
\def\cP{\mathcal{P}}
\def\cR{\mathcal{R}}
\def\cS{\mathcal{S}}
\def\R{{\mathbb R}}			
\def\C{{\mathbb C}}			
\def\N{{\mathbb N}}			
\def \heis {\mathbb{H}} 
\def\D{\mathbb{G}}
\def\bDhat{\widehat{\mathbb{G}}}
\def\pr{{\rm pr}}
\def\mz{\setminus\{0\}}
\newcommand{\trace}[1]{\tr\big[#1\big]}
\newcommand{\vertiii}[1]{{\left\vert\kern-0.25ex\left\vert\kern-0.25ex\left\vert #1 
		\right\vert\kern-0.25ex\right\vert\kern-0.25ex\right\vert}}
\mathchardef\mhyphen="2D 
\let\oldtocsection=\tocsection 
\let\oldtocsubsection=\tocsubsection 
\let\oldtocsubsubsection=\tocsubsubsection
\renewcommand{\tocsection}[2]{\hspace{0em}\oldtocsection{#1}{#2}}
\renewcommand{\tocsubsection}[2]{\hspace{1em}\oldtocsubsection{#1}{#2}}
\renewcommand{\tocsubsubsection}[2]{\hspace{2em}\oldtocsubsubsection{#1}{#2}}
\author[Davide Barilari]{Davide Barilari$^{*}$}
\address{$^{*}$Dipartimento di Matematica ``Tullio Levi-Civita'', Universit\`a degli Studii di Padova}
\email{\href{mailto:davide.barilari@unipd.it}{davide.barilari@unipd.it}}
\author[Steven Flynn]{Steven Flynn$^{*}$}
\email{\href{mailto:flynn@math.unipd.it}{stevenpatrick.flynn@unipd.it}}
\title[Refined Strichartz Estimates for 
$H$-type Carnot groups]{Refined Strichartz Estimates for sub-Laplacians \\ in Heisenberg
	and
	$H$-type groups}
\date{\today}
\begin{document}
	
	\begin{abstract}
		We obtain refined Strichartz estimates for the sub-Rie\-man\-nian Schr\"{o}dinger equation on $H$-type Carnot groups using Fourier restriction techniques. 
		In particular, we extend the previously known Strichartz estimates obtained in \cite{BBG21}
		for the Heisenberg group also to non radial initial data.  The same arguments permits to obtain refined Strichartz estimates for the wave equation on $H$-type groups.

		Our  proof is based on estimates for the spectral projectors for sub-Laplacians and reinterprets Strichartz estimates as Fourier restriction theorems for nilpotent groups in the context of trace-class operator valued measures. 

	\end{abstract}

	\maketitle
	
	\setcounter{tocdepth}{1}
	\tableofcontents

	
	\section{Introduction}
	The Schr\"{o}dinger equation in the Euclidean space is one the main prototype of dispersive equations. Indeed it is well-known that the solution of
	\begin{equation}\label{eq:sch-heis-i}
		\begin{cases}
			i\partial_t u -\Delta u=0 \\
			u|_{t=0}=u_0
		\end{cases}
	\end{equation}
	where $\Delta$ denotes the classical  Euclidean Laplace operator, satisfies the following a priori estimate for every $t\neq 0$
	\begin{equation}\label{eq:sch-heis-2}
		\|u(t)\|_{L^{\infty}}\leq \frac{C}{|t|^{n/2}}\|u_{0}\|_{L^{1}}.
	\end{equation}
	The inequality \eqref{eq:sch-heis-2} is at the core of the proof of the following Strichartz estimates, which are space-time estimates of the following type
	\begin{equation}\label{eq:sch-heis-3}
		\|u\|_{L^{q}_{t}L^{p}_{x}}\leq C \|u_{0}\|_{H^{\sigma}}
	\end{equation}
	where the pair $(p,q)$ satisfies the relation 
	\begin{equation}\label{eq:indici1}
		\sigma:=\frac{n}{2}-\frac{2}{q}-\frac{n}{p}\geq 0.
	\end{equation}
	Estimates \eqref{eq:sch-heis-3} are fundamental for applications to the non-linear Schr\"{o}dinger equation and passing from \eqref{eq:sch-heis-2} to \eqref{eq:sch-heis-3} is nowadays formalized in a classical abstract argument known as $TT^{*}$.

	More recently, the Schr\"{o}dinger equation in the sub-Riemannian setting started to attract some attention due to the following key observation contained in \cite{BGX00}: in the Heisenberg group $\mathbb{H}^{d}$ endowed with its sub-Laplacian $\Delta_{H}$ there exist a choice of the initial condition $u_{0}$ for which the solution
	\begin{equation}\label{eq:sch-heis-i}
		\begin{cases}
			i\partial_tu-\Delta_{H} u=0 \\
			u|_{t=0}=u_0
		\end{cases}
	\end{equation}
	behaves like a transport, thus a dispersive estimate like \eqref{eq:sch-heis-2} cannot hold. Nevertheless, it has been proved in \cite{BBG21} that for radial initial data $u_{0}$  (the terminology actually stands for a cylindrical symmetry with respect to the last coordinate), we have the following Strichartz type estimate 
	\begin{equation}\label{eq:sch-heis-4}
		\|u\|_{L^{\infty}_{\fv}L^{q}_{t}L^{p}_{\fh}}\leq C \|u_{0}\|_{H^{\sigma}}
	\end{equation}
	for pairs $(p,q)$ satisfying the relation
	\begin{equation}\label{eq:indici2}
		\sigma:=\frac{2d+2}{2}-\frac{2}{q}-\frac{2d}{p}\geq 0,
	\end{equation}
	where $\fg= \fh\oplus \fv$ denotes the splitting of the Lie algebra of Heisenberg group $\mathbb H^{d}$ into the 1-dim vertical and the $2d$-dim horizontal directions. 
	
	We stress that with respect to \eqref{eq:indici1}, in formula \eqref{eq:indici2} the topological dimension $n$ is replaced in the two occurrencies by $Q=2d+2$ the homogeneous dimension (which corresponds to the rescaling of the Lebesgue measure with respect to the non isotropic dilations) and by $2d$ the dimension of the horizontal distribution.
	
	\medskip
	The inequality \eqref{eq:sch-heis-4} for radial initial conditions in the Heisenberg group has been proved inspired by  the original approach of Strichartz, which is based on a Fourier restriction theorem. Indeed, even in the Euclidean case, the solution to \eqref{eq:sch-heis-i} 
	can be interpreted, setting  $y=(t,x)$ and $z=(\tau,\xi)$,
	as an inverse Fourier transform 
	\begin{equation} \label{eq:invintro}
		u(t,x)=\int_{\R^{n}}e^{i(x\cdot \xi + t|\xi|^{2})}\widehat{u}_{0}(\xi)d\xi = \int_{ \Sigma}e^{i y\cdot z} g(z)d\Sigma(z)
	\end{equation}
	of the function $g(z)=g(|\xi|^{2},\xi)=\widehat{u}_{0}(\xi)$ defined on the paraboloid 
	\begin{equation} \label{eq:parabintro}
		\Sigma = \{z=(\tau,\xi)\in   \R\times  \R^{n}\mid \tau=|\xi|^{2} \}\subset \R^{n+1}=\R\times  \R^{n},
	\end{equation}
	where $\R\times  \R^{n}$ is understood in the frequency space. Here $d\Sigma$ is the non-intrinsic  measure on $\Sigma$ defined by the pull-back of the Lebesgue measure by projection onto the first $n$ variables. The result then can be reduced to a restriction estimate for the Fourier transform on the paraboloid $\Sigma$.
	
	Performing a similar analysis in the Heisenberg group is less straightforward: the Heisenberg group is a non-commutative group so the ordinary Fourier transform should be replaced by the non-commutative one; moreover, the surface $\Sigma$ is actually an hypersurface of the space of frequencies, which in the case of a non-commutative group is parametrized by unitary irreducible and infinite dimensional representations.
	
	Such difficulties have been considered in \cite{BBG21} in which Strichartz estimate for the Schr\"{o}dinger and wave equation are proved in the Heisenberg group, when the initial condition is radial.
	\subsection{Heisenberg group, no radial assumption}

	In this paper we remove the radial assumption thus extending the result of \cite{BBG21} to every initial condition. The estimates are stated in terms of Sobolev spaces.
	
	\begin{theorem}\label{t:main1}
		Given $(p,q)$ belonging to  the admissible set
		$$
		\mathcal{A} = \Big\{(p,q)\in {(2,\infty]^2}\, \mid \, p \leq q \quad
		\mbox{and} \quad\frac2q+\frac{2d}p \leq  \frac Q2\Big\}
		$$
		the  solution to  the Schr\"odinger equation in the Heisenberg group 
		\begin{equation}\label{eq:scf}
			\begin{cases}
				i\partial_tu-\Delta_{H} u=f \\
				u|_{t=0}=u_0
			\end{cases}
		\end{equation}
		satisfies
		$$
		\|u\|_{L^\infty_\fv L^{q}_t L^{p}_{\fh}} \leq C  \|u_0\|_{H^{\sigma}} +\|f\|_{L^{1}_{t}H^{\sigma}}\,.
		$$
		where $\sigma = \frac Q2-\frac2q-\frac{2d}p$, recalling that $Q=2d+2$.
	\end{theorem}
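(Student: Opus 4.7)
The plan is to follow the Fourier restriction strategy outlined in the introduction, extending the radial argument of \cite{BBG21} to general data by exploiting the spectral decomposition of the sub-Laplacian in the full vertical frequency picture.

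First, by Duhamel's formula and Minkowski's inequality in time it suffices to establish the homogeneous estimate $\|e^{-it\Delta_H} u_0\|_{L^\infty_\fv L^q_t L^p_\fh} \lesssim \|u_0\|_{H^\sigma}$. I would then take the partial Fourier transform in the central variable $v \in \fv \cong \R$. For each vertical frequency $\lambda \in \R^*$, the sub-Laplacian becomes a twisted / magnetic Schr\"odinger operator $H_\lambda$ on $\fh \cong \R^{2d}$ whose spectrum consists of the Landau levels $\{(2k+d)|\lambda|\}_{k\in\N}$, with associated spectral projectors $\Pi_{k,\lambda}$, so that the free propagator reads
\begin{equation*}
\wh u(t, x, \lambda) \;=\; \sum_{k\in\N} e^{-it(2k+d)|\lambda|}\, \Pi_{k,\lambda}\bigl[\wh u_0(\cdot,\lambda)\bigr](x).
\end{equation*}

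Next, to handle the outer $L^\infty_\fv$ I would argue pointwise in $v$ and apply a $TT^*$ argument to the linear map $u_0 \mapsto e^{-it\Delta_H} u_0(\cdot, v)$ with target $L^q_t L^p_\fh$. This turns the Strichartz inequality into a dispersive-type estimate for the operator-valued kernel
\begin{equation*}
K_v(t, x, x') \;=\; \int_{\R^*} e^{iv\lambda} \sum_{k \in \N} e^{-it(2k+d)|\lambda|}\, \Pi_{k,\lambda}(x, x')\, d\lambda,
\end{equation*}
which is precisely the restriction statement on $\bHhat$ for the operator-valued measure supported on the spectral surfaces $\{\tau = (2k+d)|\lambda|\}_{k\in\N}$, playing the role of the paraboloid \eqref{eq:parabintro}.

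The main analytic inputs are then sharp $L^{p'}_\fh \to L^p_\fh$ bounds for the projectors $\Pi_{k,\lambda}$, uniform in $k$ and $\lambda$, combined with a van der Corput estimate of the $\lambda$-integral to produce the time decay; the scaling relation $\sigma = Q/2 - 2/q - 2d/p$ with $Q = 2d+2$ is exactly what balances the projector bounds against the summation in $k$ and the time integration. The principal obstacle, and the point where the argument departs from \cite{BBG21}, is this last step: under the radial assumption only the rotation-invariant sector of each $\Pi_{k,\lambda}$ contributes, so that a one-parameter family of Laguerre-type estimates suffices, whereas for general data one must control the whole operator family $\Pi_{k,\lambda}$, and interpret the restriction argument in terms of trace-class operator-valued measures on $\bHhat$ as announced in the abstract.
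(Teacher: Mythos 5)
Your high-level setup is aligned with the paper: Duhamel reduces to the homogeneous estimate; the partial Fourier transform in $\fv$ exposes the twisted Laplacians $H_\lambda$ with Landau levels $(2k+d)|\lambda|$ and projectors $\Pi_{k,\lambda}$; the propagator is a superposition over the Heisenberg fan, interpreted as a Fourier extension of an $\cL_1$-valued measure; the $TT^*$ reduction and the use of sharp $L^p\to L^{p'}$ bounds on $\Pi_{k,\lambda}$ (Koch--Tataru, sharpened by Jeong--Lee--Ryu) are exactly the ingredients the paper uses.

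The genuine gap is the step ``combined with a van der Corput estimate of the $\lambda$-integral to produce the time decay.'' For the Heisenberg group ($m=1$, $\fv^*\cong\R$) the relevant phase in your kernel, $\lambda\mapsto v\lambda - t(2k+d)|\lambda|$, is \emph{linear} on each half-line $\{\pm\lambda>0\}$: it has no curvature, no second-derivative gain, and van der Corput gives only $1/|v-t(2k+d)|$, which blows up along the resonant rays and cannot be summed in $k$ uniformly in $v$. This degeneracy is precisely the Bahouri--G\'erard--Xu obstruction cited in the introduction: there is no dispersive estimate of the form $\|u(t)\|_{L^\infty}\lesssim |t|^{-\alpha}\|u_0\|_{L^1}$ on $\heis^d$, so any argument that routes the time integrability through a stationary-phase decay in $\lambda$ is structurally blocked in the $m=1$ case you need. (Curvature of the sphere in $\fv^*$ does rescue this for $m>1$, which is why \cite{DH05} can use dispersion there, but not here.)

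The paper circumvents this by never asking for decay in $t$. After writing $f*_{\mathbb G}\kappa_{\Sigma_\psi}(t,\cdot) = \cF_{\mu\to -t}\bigl[\cP_\mu(f^\mu)\psi_+(\mu)\bigr]$ (Corollary~\ref{cor:GP_link}), it applies the anisotropic Hausdorff--Young inequality (Lemma~\ref{l:an-haus-young}) in the $\mu\to t$ duality, converting $L^{q'}_t$ on the left into $L^q_\mu$ on the right with no oscillatory gain required. The burden then shifts entirely to the Casarino--Ciatti spectral projector estimate (Theorem~\ref{th:bound_spec_proiec_Htype}), a H\"older step in $\mu$, Minkowski, a second Hausdorff--Young back to $t$, and summability of $\sum_k (2k+d)^{-d(1/p-1/p')}\|\Lambda_k\|_{L^p\to L^{p'}}$ (Proposition~\ref{tm:TT*-Htype}). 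This summability, not time decay, is what fails at the endpoint $p=2$ when $m=1$, and it is why the theorem excludes $p=2$. If you replace your van der Corput step with this Hausdorff--Young-in-$\mu$ mechanism and the projector summation, the rest of your outline goes through.

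A secondary, minor point: your pointwise-in-$v$ $TT^*$ (fix $v$, estimate $u_0\mapsto e^{-it\Delta_H}u_0(\cdot,v)$) is not quite the paper's formulation; the paper runs $TT^*$ in the mixed norm directly, i.e.\ on $L^1_\fv L^q_t L^p_\fh \to L^\infty_\fv L^{q'}_t L^{p'}_\fh$ (Theorem~\ref{tm:TTstar} with $r=1$), which keeps the operator-valued measure structure intact and is what makes the M\"uller obstruction to a better vertical integrability visible. Your formulation can be made to work, but the mixed-norm $TT^*$ is cleaner and is what the paper actually proves.
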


	It is crucial that the vertical variable has a $L^{\infty}$ norm due to the counterexample of Muller in \cite{mullerRestrictionTheoremHeisenberg1990}. We stress that $\sigma=0$  is not admitted here, since together with $p\leq q$ forces $p=2$. The endpoint case $p=2$ is excluded in our statement since our proof does not work for $(m,p)=(1,2)$, where $m$ is the vertical dimension. Notice that in \cite{BBG21} the endpoint case has been proved for radial initial data with a separate ad hoc argument.

	\subsection{$H$-type groups} Consider now a more general Carnot group of step 2 with Lie algebra $\fg=\fh\oplus \fv$ of dimension $n=2d+m$ where $\dim \fh=2d$, $\dim \fv=m$ and
	\begin{equation}
		[\fh, \fv]=\fv, \quad [\fh, \fv]=0.
	\end{equation}
	Fix a scalar product  $g$ on $\fg$ such that $\fh\perp \fv$. For any $\mu\in \fv^*$, consider the skew symmetric operator $J_\mu: \fh\to \fh$ defined by
	\begin{equation}
		\langle \mu, [X, Y]\rangle=g(X, J_\mu Y).
	\end{equation}
	A step 2 Carnot group is \textit{$H$-type} if for all $\mu\in \fv^*$ we have
	$J_\mu^2=-\|\mu\|_g^2I$,
	with $I$  the identity map on $\fh$. The sub-Laplacian $\Delta_{H}$ on the $H$-type group $G$ is given by
	\begin{equation}
		\Delta_H=\sum_{j=1}^{2d} X_j^2
	\end{equation}
	where $\{X_j\}_{j=1}^{2d}$ is an orthonormal basis for $\fh$. We consider the Schr\"{o}dinger equation
	\begin{equation}\label{eq:sch-heis-8i}
		\begin{cases}
			i\partial_tu-\Delta_{H} u=f \\
			u|_{t=0}=u_0
		\end{cases}
	\end{equation}
	We obtain the following result.
	(Recall that $H$-type groups of dimension $n=2d+m$ have homogeneous dimension $Q=2d+2m$.) 
	
	\begin{theorem}\label{t:main2}
		Let $G$ be an $H$-type group with $\dim \fh=2d$ and $\dim \fv=m$. Assume $(m,p)\neq (1,2)$. Given $(r,p,q)$ belonging to  the admissible set
		$$
		\mathcal{A} = \Big\{(r,p,q)\in [2,\infty]^3\, /\, p \leq r,q; \ r\geq 2+\frac{4}{m-1}, \ \frac2q+\frac{2d}p + \frac{2m}{r}\leq  \frac Q2\Big\}
		$$
		the  solution to  the Schr\"odinger equation \eqref{eq:sch-heis-8i} on an $H$-type Carnot group satisfies
		$$
		\|u\|_{L^r_\fv L^{q}_t L^{p}_{\fh}} \leq C  \|u_0\|_{H^{\sigma}} +\|f\|_{L^{1}_{t}H^{\sigma}}\,.
		$$
		where $\sigma = \frac Q2-\frac2q-\frac{2d}p-\frac{2m}{r}$ and $Q=2d+2m$.
	\end{theorem}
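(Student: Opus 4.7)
The plan is to extend the Fourier restriction approach of \cite{BBG21} to the $H$-type setting, treating the $m$-dimensional vertical variable $v\in\fv\cong\R^m$ through a Stein--Tomas style restriction rather than freezing it or imposing the radial assumption. First I would diagonalize $\Delta_H$ via the group Fourier transform: for $\mu\in\fv^*\setminus\{0\}$ the associated Schr\"odinger representation $\pi_\mu$ realizes $-\Delta_H$ on $L^2(\R^d)$ as $|\mu|(2N+d)$, where $N$ is the harmonic oscillator number operator built from the symplectic structure of $J_\mu$; its spectrum is $\{(2k+d)|\mu|:k\in\N\}$ with finite-rank spectral projectors $P_k^\mu$. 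The solution of \eqref{eq:sch-heis-8i} then reads
$$u(t,y,v)=\sum_{k\geq 0}\int_{\fv^*}e^{it(2k+d)|\mu|}\tr\bigl[P_k^\mu\,\wh{u_0}(\mu)\,\pi_\mu(y,v)^*\bigr]|\mu|^d\,d\mu+\text{(Duhamel)},$$
which exhibits $u$ as an inverse operator-valued Fourier transform concentrated, for each $k$, on the cone $\{(\tau,\mu)\in\R\times\fv^*:\tau=(2k+d)|\mu|\}$ in the dual frequency space.

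Next I would apply a $TT^*$ argument. Let $T_k$ denote the extension operator from operator-valued data on the frequency side (restricted to the $k$-th cone) to $L^r_\fv L^q_t L^p_\fh$; its adjoint is the trace-class operator-valued restriction to the same cone, weighted by $P_k^\mu|\mu|^d$. Establishing
$$\|T_kT_k^*F\|_{L^r_\fv L^q_t L^p_\fh}\leq C_k\|F\|_{L^{r'}_\fv L^{q'}_t L^{p'}_\fh}$$
with constants $C_k$ summable against a Sobolev weight of order $\sigma$ is equivalent, by duality and the standard abstract $TT^*$ reduction, to the desired Strichartz inequality. The convolution kernel of $T_kT_k^*$ factorizes into a horizontal piece (a Laguerre-type spectral projection at level $k$ and frequency $|\mu|$) times a vertical piece, which is the Fourier transform of a smooth density on the sphere $\{|\mu|=\rho\}\subset\fv^*$ parametrized by $\rho>0$.

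The horizontal estimate is a mixed-norm $L^{p'}_\fh\to L^p_\fh$ bound for the spectral projector block with sharp dependence on $(2k+d)|\mu|$; these are the $H$-type analogues of the Heisenberg Laguerre bounds exploited in \cite{BBG21} and they produce the constraints $p>2$ and $p\leq q$. The vertical estimate is the Stein--Tomas restriction theorem for $S^{m-1}\subset\fv^*$, which produces both the threshold $r\geq 2+4/(m-1)$ and the $2m/r$ contribution to $\sigma$. A standard Keel--Tao argument on the temporal factor supplies the $2/q$ term; summing a Littlewood--Paley decomposition in $|\mu|$ and balancing homogeneities via the non-isotropic Carnot dilations (which act as $\lambda$ on $\fh$ and $\lambda^2$ on $\fv$) yields the scaling identity $\sigma=Q/2-2/q-2d/p-2m/r$, and a final summation in $k$ closes the estimate.

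The main obstacle is that the spectral measure is operator-valued and the ``cone'' carries the trace-class weight $P_k^\mu|\mu|^d$, so ordinary restriction estimates have to be interpreted in the framework of trace-class operator-valued measures developed in \cite{BBG21}. A secondary difficulty specific to the $H$-type case is the interplay between the horizontal Laguerre estimate and the vertical Stein--Tomas estimate: for $m\geq 2$ the sphere $S^{m-1}$ carries genuine curvature and the restriction is non-trivial, whereas for $m=1$ one recovers the Heisenberg setting of Theorem \ref{t:main1}. This is also why the endpoint $(m,p)=(1,2)$ must be excluded, as there both the vertical Stein--Tomas step (the sphere $S^0$ is disconnected and flat) and the horizontal endpoint argument from \cite{BBG21} degenerate.
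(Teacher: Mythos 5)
Your overall strategy mirrors the paper's: diagonalize $\Delta_H$ via the group Fourier transform, realize the solution as the inverse operator\mbox{-}valued Fourier extension of a measure supported on the Heisenberg fan $\{\mu=(2k+d)|\lambda|\}$ in $\widehat{\R}\times\fv^*$, run a $TT^*$ argument in the framework of trace-class operator-valued measures, bound the resulting convolution kernel through spectral projector estimates that combine a horizontal Laguerre-type bound with a vertical restriction estimate to the sphere $S^{m-1}\subset\fv^*$, sum in $k$, and close by frequency localization plus Carnot scaling. Your identification of $r\geq 2+4/(m-1)$ with the Stein--Tomas threshold and of the exclusion $(m,p)=(1,2)$ with the degenerate $m=1$ sphere is exactly right, and this is the route the paper takes, with the horizontal/vertical combination packaged as a single spectral-projector bound for $\cP_\mu^{-\Delta}$ (the Casarino--Ciatti estimate, Theorem~\ref{th:bound_spec_proiec_Htype}) rather than as two separate estimates applied to a factorized kernel.

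That said, there is one claim you should repair: the ``standard Keel--Tao argument on the temporal factor'' is not available here and is not what the paper uses. Keel--Tao presupposes a dispersive decay $\|\cU(t)\|_{L^1\to L^\infty}\lesssim |t|^{-\alpha}$, and the whole premise of this problem is that no such estimate holds (this is the \cite{BGX00} obstruction). The $2/q$ contribution in the exponent $\sigma$ is produced instead by applying the (anisotropic) Hausdorff--Young inequality in the dual pair $(t,\mu)$ --- converting the $L^{q'}_t$ norm into an $L^q_\mu$ norm, which is legal because $q\leq p\leq 2$ on the primed side --- and then H\"older in $\mu$ against the compactly supported cutoff $\psi$, peeling off the homogeneity power $\mu^{m(1/r-1/r')+d(1/p-1/p')-1}$. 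If you try to close your argument via Keel--Tao you will not be able to run the interpolation. Relatedly, the constraint $p\leq q$ arises from the direction in which Hausdorff--Young applies (and from the Minkowski exchanges needed to put the norms in the right order), not from the Laguerre bound alone; and the theorem does admit $p=2$ when $m>1$, so ``$p>2$'' is only forced when $m=1$, via the convergence of $\sum_k (2k+d)^{-m(1/r-1/r')-d(1/p-1/p')}\|\Lambda_k\|_{L^p\to L^{p'}}$ (the content of Proposition~\ref{tm:TT*-Htype}). Finally, the kernel does not literally factorize into a horizontal times a vertical piece: the spherical integral in $\omega\in S^{m-1}$ enters through $e_k^{\mu\omega/(2k+d)}(x,z)$ and couples the two, so the right mechanism is the mixed-norm spectral projector bound rather than a product of two scalar restriction estimates.
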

	Notice that in this case, when the codimension $m>1$, we  also get the endpoint case $p=2$. Also, $m=1$ forces $r=\infty$   and $p> 2$, which is exactly the Heisenberg case. We stress that for $m>1$ we can remove the $L^{\infty}$ restriction in the vertical variable.

	\subsection{On the wave equation} The same approach permits to obtain Strichartz estimates on the wave equation associated with the sub-Laplacian on $H$-type groups
	\begin{equation}\label{eq:wave-heis-i}
		\begin{cases}
			\partial^{2}_tu-\Delta_{H} u=f \\
			(u,\partial_{t}u)|_{t=0}=(u_0,v_{0})
		\end{cases}
	\end{equation}
	Also in this case in the Heisenberg group we remove the radial assumption in the initial data, and we can generalize to $H$-type groups.
	\begin{theorem}\label{t:main3}
		Let $G$ be an $H$-type group with $\dim \fh=2d$ and $\dim \fv=m$.  Assume $(m,p)\neq (1,2)$. Given $(r,p,q)$ belonging to  the admissible set
		$$
		\mathcal{A} = \Big\{(r,p,q)\in  [2,\infty]^3\, /\, p \leq r,q; \ r\geq 2+\frac{4}{m-1}, \ \frac1q+\frac{2d}p + \frac{2m}{r}\leq  \frac {Q-2}{2}\Big\}
		$$
		the  solution to  the wave equation \eqref{eq:wave-heis-i} on an $H$-type Carnot group satisfies
		$$
		\|u\|_{L^r_\fv L^{q}_t L^{p}_{\fh}} \leq C ( \|\nabla_{H}u_0\|_{ H^{ \sigma}}+\|v_0\|_{ H^{ \sigma}} +\|f\|_{L^{1}_{t} H^{ \sigma}})\,.
		$$
		where $\sigma = \frac {Q-2}{2}-\frac1q-\frac{2d}p-\frac{2m}{r}$ and $Q=2d+2m$. 
	\end{theorem}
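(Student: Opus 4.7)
The strategy is a direct adaptation of the proof of Theorem~\ref{t:main2}, replacing the paraboloid associated with the Schr\"odinger dispersion relation by the cone associated with the wave dispersion relation, and following the same three steps: (i) reduction to a homogeneous estimate; (ii) passing to the Fourier side on $G$ via $TT^{*}$; (iii) applying the spectral projector / restriction estimates.

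First, a standard Duhamel / Minkowski argument reduces the claim to the homogeneous estimate with $f=0$. Writing
\[
u(t) \;=\; \cos\bigl(t\sqrt{-\Delta_H}\bigr) u_0 \;+\; \frac{\sin\bigl(t\sqrt{-\Delta_H}\bigr)}{\sqrt{-\Delta_H}}\, v_0,
\]
and splitting the trigonometric functions into half-wave propagators $e^{\pm it\sqrt{-\Delta_H}}$, it is enough to bound $\|e^{\pm it\sqrt{-\Delta_H}} w\|_{L^r_\fv L^q_t L^p_\fh}$ by $\|w\|_{H^\sigma}$, choosing $w=(-\Delta_H)^{-1/2}v_0$ for the sine term and absorbing the extra factor of $\sqrt{-\Delta_H}$ in the cosine term so that $u_0$ appears only through $\nabla_H u_0 \in H^\sigma$ (using $\|\sqrt{-\Delta_H}\,\cdot\|_{H^\sigma}\approx \|\nabla_H\,\cdot\|_{H^\sigma}$).

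Next, via the group Fourier transform on $G$ and the $TT^{*}$ scheme used for the Schr\"odinger case, the half-wave Strichartz estimate is equivalent to a Fourier restriction / extension inequality for an operator-valued measure supported on the conic hypersurface
\[
\Sigma_{\mathrm{w}} \;=\; \bigsqcup_{k\ge 0}\bigl\{ (\tau,\mu)\in\R\times\fv^{*} \,:\, \tau = \sqrt{(2k+d)|\mu|}\,\bigr\},
\]
in place of the parabolic hypersurface $\tau=(2k+d)|\mu|$ used for Schr\"odinger. The spectral projector estimates on $H$-type groups underlying the restriction inequality are unchanged; the only new ingredient is a one-dimensional change of variables $\tau\mapsto\tau^{2}$ on the $\tau$-line. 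Its Jacobian, of size $|\mu|^{-1/2}(2k+d)^{-1/2}$, is precisely what shifts the admissibility condition from $2/q+2d/p+2m/r\le Q/2$ to $1/q+2d/p+2m/r\le (Q-2)/2$, and correspondingly adjusts $\sigma$ into the stated exponent.

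The main obstacle I foresee is verifying that this Jacobian weight does not spoil the $L^{r'}_{\mu}$ control in the vertical direction that is the heart of step (iii) for $H$-type groups: in the Schr\"odinger proof the constraint $r\ge 2+4/(m-1)$ is tuned to absorb the Plancherel density on $\fv^{*}$, and one must check by scaling on $\fv^{*}$ that the additional $|\mu|^{-1/2}$ appearing on the cone is still tamed under the same constraint. Once this is in place, and with the endpoint $(m,p)=(1,2)$ excluded exactly as in Theorem~\ref{t:main2}, the remainder of the argument proceeds verbatim.
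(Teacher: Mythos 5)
Your overall strategy coincides with the paper's: reduce to the homogeneous equation via Duhamel, split the propagator into half-waves $e^{\pm it\sqrt{-\Delta_H}}$ (the paper diagonalizes into $\gamma_{\pm}=\tfrac12\bigl(u_0\pm (i\sqrt{-\Delta_H})^{-1}v_0\bigr)$), replace the Heisenberg-fan paraboloid by the conic sets $\Sigma^{\pm}=\{\mu^2=|\lambda|(2k+d),\ \pm\mu>0\}$, prove the corresponding $TT^{*}$/restriction inequality (Proposition~\ref{prop:TTstarWave}), and close with the frequency-localization plus scaling argument. The exclusion of $(m,p)=(1,2)$ is handled exactly as in the Schr\"odinger case.

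However, the mechanism you give for the exponent shift is not the one the paper actually uses, and as stated it is misleading. The Jacobian from the change of variables $\rho=\mu^2/(2k+d)$ does appear in the wave convolution kernel -- it produces the extra factor $\mu^{d+m}$ in \eqref{eq:wave-kernelH} -- but because the entire kernel carries a compactly supported cutoff $\psi$ in $\mu$, that factor is bounded above and below on $\operatorname{supp}\psi$ and is simply absorbed; Proposition~\ref{prop:TTstarWave} delivers \emph{exactly} the same series bound as Proposition~\ref{prop:TTstarintro2}, with the same constraint $r\le 2(m+1)/(m+3)$. In particular there is nothing to ``tame'' in the $L^{r'}_{\fv}$ direction -- the worry you raise about the Jacobian weight spoiling the vertical control does not arise. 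What actually changes the admissibility condition is the scaling step afterwards: the wave flow commutes with the anisotropic dilation $u\mapsto u(\Lambda^{-1}t,\Lambda^{-1}x,\Lambda^{-2}z)$ rather than $u(\Lambda^{-2}t,\cdot)$, which turns the $\tfrac{2}{q'}$ into $\tfrac{1}{q'}$; and one more power of $\Lambda$ is traded in passing from $\|\gamma_{\pm}\|_{L^2}$ to $\|\nabla_H u_0\|_{L^2}+\|v_0\|_{L^2}$ via Lemma~\ref{l:annulus}, which accounts for $Q/2\mapsto (Q-2)/2$. These two effects, both visible only in the rescaling argument and not in the restriction inequality, are what produce the stated $\sigma$. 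So your plan is essentially correct, but you should reattribute the exponent shift from the restriction step to the scaling step, and drop the concern about the Jacobian competing with the Plancherel density.
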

	Again, we highlight that $m=1$ forces $r=\infty$ and $p>2$, recovering the Heisenberg case, for which the corresponding estimate has been proved for radial data in \cite{BBG21}.

	\subsection{Link with the Fourier restriction}
	
	%
	In the Euclidean case one can observe that the equation \eqref{eq:invintro} can be rewritten as 
	$u=R^{*}_{\Sigma}g$
	where $R^{*}_{\Sigma}$ is the formal adjoint of the Fourier restriction operator
	$$R_{\Sigma}f=\cF(f)|_{\Sigma},$$
	where $\Sigma$ is the paraboloid in $\R^{n+1}$ defined in \eqref{eq:parabintro}. It is not difficult to prove that the continuity of $R^{*}_{\Sigma}$ from $L^{2}(\Sigma)$ to $L^{q'}_{t}L^{p'}_{x}(G)$ is equivalent to the continuity of the operator
	$$R^{*}_{\Sigma}R_{\Sigma}f=f*\kappa_{\Sigma}$$	
	from the space	$L^{q}_{t}L^{p}_{x}(G)$ to $L^{q'}_{t}L^{p'}_{x}(G)$, that is an estimate of the form.
	\begin{equation}	\label{eq:introgsigma}
		\|f*\kappa_{\Sigma}\|_{ L_t^{q'}L_x^{p'}}\leq C \|f\|_{L^q_tL^p_x},
	\end{equation}
	where $\kappa_{\Sigma}$ denotes the inverse Fourier transform of the pullback measure $d\Sigma$ induced on $\Sigma$ by the projection onto the first $n$ variables.
	
	\begin{remark} It is important here to observe that $\Sigma$ is a surface in the space of frequencies, which in the Euclidean case is isomoprhic to $\R^{n}$ itself. On general nilpotent groups $G$ the space of frequencies is replaced by the dual of the group $\widehat G$ which is the set of (equivalence classes of) strongly continuous unitary representations of $G$. For more details we refer to Sections~\ref{s:fheis} and \ref{s:proofH}.
	\end{remark}

	A crucial point in the Euclidean case is to observe that one can write, at least formally,
	\begin{equation}\label{eq:intro-kernel}
		f* \kappa_{\Sigma}(t, x)=\int_0^\infty e^{i\mu t}\cP_\mu(f^{\mu})(x)d\mu,
	\end{equation} 
	where $\cP_\mu=\cP^{-\Delta}_\mu$ is the spectral projector of the Euclidean Laplacian $-\Delta$ and $f^{\mu}$ denotes the Euclidean Fourier transform of $f$ with respect to the last variable and evaluated at $\mu$. One can then link the estimate \eqref{eq:introgsigma} to the properties of the spectral projectors.

	Actually to prove such an estimates, and to guarantee the convergence of the integral \eqref{eq:intro-kernel}, one needs compactness either of the surface or of the measure. For this reason one works with a truncation $\psi$ in the $\mu$ variable in the integral \eqref{eq:intro-kernel} and we denote $\kappa_{\Sigma_{\psi}}$ the  inverse Fourier transform of the corresponding compactly supported measure.
	
	\medskip
	The proof of our results is obtained by showing that the solution to the Schr\"{o}dinger Equation on an $H$-type group $G$:
	\begin{equation}\label{eq:sch-heis}
		\begin{cases}
			i\partial_tu-\Delta_{H} u=0 \\
			u|_{t=0}=u_0
		\end{cases}
	\end{equation}
	can be written as a inverse Fourier transform as in \eqref{eq:invintro}
	\begin{align}\label{eq:iftsigma}
		u_{t}(x, z)=\cF_\mathbb{G}^{-1}\{\cF_G(u_0)d\Sigma\}(t, x, z).
	\end{align}
	where $\mathbb{G}=\R\times G$ is the extended space and $\Sigma$ is a suitable subset of the dual space endowed with the relevant measure $d\Sigma$.
	Considering 
	\begin{equation}\label{eq:newlab33}
		\kappa_{\Sigma_{\psi}}=\cF_\mathbb{G}^{-1}(\psi d\Sigma)
	\end{equation} (one actually need to multiply by a bump function $\psi$ to ensure compact support) the proof is then based on the analogue estimates of \eqref{eq:introgsigma} in our setting. We refer to Section~\ref{sec:dual_parab} and Section~\ref{s:frp} for all details.
	
	\smallskip
	To state these estimates we need to introduce a few more objects. The twisted Laplacian $\Delta_{H}^{\lambda }$ is defined by the action of the sub-Laplacian on the $z$-fourier modes of a function $f=f(x, z)$ on the Heisenberg group:
	$$(\Delta_{H} f)^{\lambda}=\Delta_{H}^{\lambda }f^{\lambda}$$
	Precisely, letting $y_i:=x_{i+d}$ for $i=1,\ldots d$, we have
	\begin{align*}
		\Delta_{H}^{\lambda}:=\Delta_{\R^{2d}}+i\lambda\sum_{j=d}^n \left(y_j\frac{\partial}{\partial x_j}-x_j\frac{\partial}{\partial y_j}\right)+\frac{1}{4}|\lambda|^2\sum_{j=1}^d(x_j^2+y_j^2).
	\end{align*}
	
	The twisted Laplacian $\Delta^\lambda_{H}$ is an essentially self-adjoint elliptic operator on the set $S(\mathbb{C}^d)\subset L^2(\mathbb{C}^d)$ with a pure point spectrum consisting of eigenvalues $\{-|\lambda|(2k+d)\mid k\in \N\}$.
	The following estimate involves the norm of the spectral projector $\Lambda_k$ for the twisted Laplacian, i.e., the projector on the $k$-th eigenspace of the operator $\Delta^{\lambda}_{H}$ when $\lambda=1$.

	\medskip
	For the readers convenience we state the results separately for the Heisenberg group and the $H$-type case, to highlight the differencies when the codimension $m$ is equal to one  and when it is greater.
	
	\begin{proposition}\label{prop:TTstarintro}
		Let $\mathbb{G}=\R\times\heis^{d}$, and $\kappa_{\Sigma_{\psi}}$ be as defined in  \eqref{eq:newlab33}. For any $p, q\in  [1, 2)$ satisfying  $q\leq p$   there exists $C>0$ such that  for all $f\in \cS(\mathbb{G})$  
		\begin{equation}
			\|f*_{\mathbb{G}} \kappa_{\Sigma_{\psi}}\|_{L_\fv^\infty L_t^{q'}L_\fh^{p'}}\leq C \left(\sum_{k=0}^\infty\frac{\|\Lambda_k\|_{L^p\to L^{p'}}}{(2k+d)^{1+d(\frac{1}{p}-\frac{1}{p'})}} \right)\|f\|_{L^1_\fv L^q_tL^p_\fh},
		\end{equation}
		and  the series in the right hand side is convergent. 
	\end{proposition}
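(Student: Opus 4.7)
The plan is to decompose $f*_\mathbb{G}\kappa_{\Sigma_\psi}$ into spectral modes of the sub-Laplacian, estimate each mode via a traveling-wave reparametrization, and sum with the claimed weights.

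\emph{Spectral decomposition.} Since the group Fourier transform of $\kappa_{\Sigma_\psi}$ is the operator-valued measure $\psi(\tau,\lambda)\,d\Sigma$ concentrated on the paraboloid $\{\tau=-|\lambda|(2k+d)\}$ with weight the spectral projectors $\Lambda_k^\lambda$, the convolution decomposes as $f*_\mathbb{G}\kappa_{\Sigma_\psi}=\sum_{k\ge0}\Phi_k$ with
\begin{equation*}
\Phi_k(t,\zeta,z)=\int_\R e^{i(\lambda z-t|\lambda|(2k+d))}\,\psi_k(\lambda)\,\Lambda_k^\lambda\bigl[\widetilde f^\lambda(-|\lambda|(2k+d),\cdot)\bigr](\zeta)\,d\lambda,
\end{equation*}
where $\psi_k(\lambda):=\psi(-|\lambda|(2k+d),\lambda)$ and $\widetilde f^\lambda(\tau,\cdot)$ is the full $(t,z)$-Fourier transform of $f$. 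Splitting the $\lambda$-integration according to the sign of $\lambda$, the phase becomes linear in $\lambda$ and I factor $\Phi_k(t,\zeta,z)=\Psi_k^+(\zeta,z-t(2k+d))+\Psi_k^-(\zeta,z+t(2k+d))$ as a superposition of two wave-packets propagating in $z$ at speeds $\pm(2k+d)$.

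\emph{Slice estimate.} The traveling-wave reparametrization $u=z\mp t(2k+d)$ has Jacobian $(2k+d)$ in $t$, so pointwise in $(\zeta,z)$ one has $\|\Psi_k^\pm(\zeta,z\mp t(2k+d))\|_{L^{q'}_t}=(2k+d)^{-1/q'}\|\Psi_k^\pm(\zeta,\cdot)\|_{L^{q'}_u}$; crucially this bound is independent of $z$, hence the outer $L^\infty_\fv$ norm costs nothing. Applying Hausdorff--Young in $u$ (using $q'\ge2$) and the Minkowski swaps $L^{q'}_uL^{p'}_\fh\leq L^{p'}_\fh L^{q'}_u$ and $L^{p'}_\fh L^q_\lambda\leq L^q_\lambda L^{p'}_\fh$ (both valid because $q\leq p\leq p'$), I reduce the slice to $\|\psi_k(\lambda)\Lambda_k^\lambda[\widetilde f^\lambda(-|\lambda|(2k+d),\cdot)]\|_{L^q_\lambda L^{p'}_\fh}$. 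The operator norm $\|\Lambda_k^\lambda v\|_{L^{p'}}\leq\|\Lambda_k^\lambda\|_{L^p\to L^{p'}}\|v\|_{L^p}$, the dilation identity $\|\Lambda_k^\lambda\|_{L^p\to L^{p'}}=|\lambda|^{d(1/p-1/p')}\|\Lambda_k\|_{L^p\to L^{p'}}$ (from the intertwiner $V_\lambda g(\zeta)=|\lambda|^{d/2}g(\sqrt{|\lambda|}\,\zeta)$ between $\Delta_H^\lambda$ and $|\lambda|\Delta_H^{\pm1}$), and the support constraint $\supp\psi_k\subset\{|\lambda|\lesssim(2k+d)^{-1}\}$ produce a factor $(2k+d)^{-d(1/p-1/p')}\|\Lambda_k\|_{L^p\to L^{p'}}$ in front of $\|\widetilde f^\lambda(-|\lambda|(2k+d),\cdot)\|_{L^q_\lambda L^p_\fh}$.

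\emph{Data bound and summation.} For the remaining factor, the change of variable $\tau=-|\lambda|(2k+d)$ supplies a Jacobian $(2k+d)^{-1}$ and turns the $L^q_\lambda$-integral along the paraboloid slice into an $L^q_\tau$-integral over a bounded interval. H\"older together with the pointwise bound $\sup_\lambda|\widetilde f^\lambda(\tau,\zeta)|\leq\|(\cF_tf)(\tau,\cdot,\zeta)\|_{L^1_\fv}$ (Hausdorff--Young in $z$), Hausdorff--Young in $t$ (using $q\leq2$), and a final Minkowski reordering (legal since $1\leq p,q$) yield $\|\widetilde f^\lambda(-|\lambda|(2k+d),\cdot)\|_{L^q_\lambda L^p_\fh}\lesssim(2k+d)^{-1/q}\|f\|_{L^1_\fv L^q_tL^p_\fh}$. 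Combining the three powers $(2k+d)^{-1/q'-d(1/p-1/p')-1/q}=(2k+d)^{-1-d(1/p-1/p')}$ and summing over $k$ yields the estimate, with convergence of the resulting series following from the known Koch--Ricci-type bounds on $\|\Lambda_k\|_{L^p\to L^{p'}}$ in the admissible range. The main obstacle I foresee is bookkeeping rather than a conceptual difficulty: each $k$-slice alone is $t$-periodic and hence not in $L^{q'}_t(\R)$, so the $L^{q'}_t$-integrability of $\Phi_k$ emerges only after the traveling-wave change of variable in the two $\lambda$-halves, and threading the Hausdorff--Young applications, the two Minkowski swaps, and the two changes of variable together so that the $|\lambda|$- and $(2k+d)$-powers combine exactly into the weight $-1-d(1/p-1/p')$ is the delicate step.
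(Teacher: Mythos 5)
Your proposal is correct, and it takes a genuinely different route from the paper. The paper proves the general $H$-type Proposition~\ref{prop:TTstarintro2} (with Proposition~\ref{prop:TTstarintro} as the case $m=1$, $r=1$): it writes $f*_\mathbb{G}\kappa_{\Sigma_\psi}$ via Corollary~\ref{cor:GP_link} as a single Euclidean Fourier integral in $\mu$, applies the anisotropic Hausdorff--Young inequality in time (Lemma~\ref{l:an-haus-young}), and then invokes the spectral-projector estimate (Theorem~\ref{th:bound_spec_proiec_Htype}, due to Casarino--Ciatti) as a black box; that theorem already produces the $L^{r'}_\fv$ norm and the weighted $\sum_k \|\Lambda_k\|$ in one stroke, after which only a H\"older in $\mu$ and further Minkowski/Hausdorff--Young steps on the data remain, plus Proposition~\ref{tm:TT*-Htype} for the convergence of the series. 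You instead unpack this for $m=1$: decompose into $k$-modes, split each into the two branches $\pm\lambda$ of the dual one-sphere, and use the traveling-wave substitution $u=z\mp t(2k+d)$ to make each branch $z$-independent, so that $L^\infty_\fv$ costs nothing; then Hausdorff--Young in $u$, the dilation identity \eqref{homo-twist} plus the support constraint on $\psi$, and a further $\lambda\mapsto\tau$ substitution together with Hausdorff--Young in $z$ and in $t$ deliver the three powers $(2k+d)^{-1/q'}$, $(2k+d)^{-d(1/p-1/p')}$, $(2k+d)^{-1/q}$ that combine into the weight $(2k+d)^{-1-d(1/p-1/p')}$, as required; the Minkowski swaps you use are all legal under $q\le p$, and the convergence of the series is, as in the paper, the Koch--Ricci content of Corollary~\ref{c:twisted-estimate-comb}. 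What the paper's route buys is uniformity in the vertical dimension $m$: your traveling-wave reparametrization is specific to $m=1$ (the unit sphere in $\fv^*$ is two points), whereas Theorem~\ref{th:bound_spec_proiec_Htype} works for all $H$-type groups and is what makes Proposition~\ref{prop:TTstarintro2} possible. What your route buys is transparency: the origin of the $L^\infty_\fv$ and of each power of $(2k+d)$ is made explicit, rather than delegated to an external spectral-projector theorem. One small imprecision in your final remark: the $k$-slice $\Phi_k$ is not $t$-periodic (it is a continuum superposition of frequencies $|\lambda|(2k+d)$ as $\lambda$ ranges over $\supp\psi_k$), so the stated concern is not quite right; the genuine point --- that one cannot apply Hausdorff--Young in $t$ to $\Phi_k$ directly because the $t$-frequency is $|\lambda|(2k+d)$ rather than $\lambda$, and a reparametrization is needed first --- is valid, and it is precisely what your $u$-substitution (or, in the paper, the $\mu$-substitution) accomplishes.
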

	Notice that the endpoint case $p=2$ is excluded. Concerning $H$-type groups we prove the following result. Recall that  $m=\dim \fv$.
	\begin{proposition}\label{prop:TTstarintro2}
		Let $\mathbb{G}=\R\times G$, where $G$ is an $H$-type with $\dim \fh=2d$ and $\dim \fv=m$. Let $\kappa_{\Sigma_{\psi}}$ be defined in \eqref{eq:newlab33}. For any $p, q\in  [1, 2]$ and $1\leq r \leq 2(m+1)/(m+3)$ satisfying  $q\leq p$ and $r\leq p$  with $(m,p)\neq (1,2)$, there exists $C>0$ such that for all $f\in \cS(\mathbb{G})$   
		\begin{equation}\label{eq:strichartz_estimate}
			\|f*_{\mathbb{G}} \kappa_{\Sigma_{\psi}}\|_{ L^{r'}_\fv L^{q'}_tL^{p'}_\fh}\leq C \left(\sum_{k=0}^\infty\frac{\|\Lambda_k\|_{L^p\to L^{p'}}}{(2k+d)^{m(\frac{1}{r}-\frac{1}{r'})+d(\frac{1}{p}-\frac{1}{p'})}} \right)\|f\|_{ L^r_\fv L^q_tL^p_\fh},
		\end{equation}
		and the series in the right hand side is convergent. 
	\end{proposition}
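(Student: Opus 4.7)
The plan is a $TT^*$-type approach: decompose convolution with $\kappa_{\Sigma_\psi}$ into spectral bands tied to the eigenvalues of the twisted sub-Laplacians, estimate each band using Hausdorff--Young in the center variables together with the hypothesis on $\|\Lambda_k\|_{L^p\to L^{p'}}$ for the horizontal part, then sum in $k$. First I would apply Fourier inversion on $\mathbb{G}=\R\times G$ to the defining formula \eqref{eq:newlab33}. Parametrising the generic part of $\widehat G$ via Kirillov by $\lambda\in\fv^*\setminus\{0\}$, and using that $-\Delta_H^\lambda$ has pure point spectrum $\{|\lambda|(2k+d):k\in\N\}$, the measure $d\Sigma$ on the paraboloid fibers into a sum over $k$ of components supported on the hypersurfaces $\tau=|\lambda|(2k+d)$. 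Hence
\begin{equation*}
f*_\mathbb{G}\kappa_{\Sigma_\psi}=\sum_{k\geq 0}T_k f,
\end{equation*}
where $T_k$ is schematically a $\lambda$-superposition of the spectral projector $\Lambda_k^\lambda$ of $-\Delta_H^\lambda$, modulated by the phase $e^{i(|\lambda|(2k+d)t+\langle\lambda,z\rangle)}\psi(|\lambda|(2k+d),\lambda)$ and integrated against the Plancherel density $|\lambda|^d\,d\lambda$.

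Next I would estimate a single band. Pushing the horizontal norm inside via Minkowski and exploiting $H$-type homogeneity, the standard dilation intertwiner between $\Lambda_k^\lambda$ and $\Lambda_k$ yields the scaling identity $\|\Lambda_k^\lambda\|_{L^p_\fh\to L^{p'}_\fh}=|\lambda|^{-d(1/p-1/p')}\|\Lambda_k\|_{L^p\to L^{p'}}$. The residual $(t,z)$-integral is a Fourier transform in $(\tau,\lambda)\in\R\times\fv^*$ of a compactly supported density, which I would control by Hausdorff--Young in $t\in\R$ (passing from $L^{q'}_t$ to $L^q_\tau$) and in $z\in\fv\cong\R^m$ (passing from $L^{r'}_\fv$ to $L^r_\lambda$). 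Combining the resulting weight $|\lambda|^{m(1/r-1/r')}$ with $|\lambda|^{-d(1/p-1/p')}$ from the horizontal intertwiner and $|\lambda|^d$ from Plancherel, then substituting $\lambda\mapsto (2k+d)^{-1}\lambda$ to absorb the $k$-dependence of the oscillatory phase and of the cutoff $\psi(|\lambda|(2k+d),\cdot)$, isolates exactly the single-band bound
\begin{equation*}
\|T_kf\|_{L^{r'}_\fv L^{q'}_t L^{p'}_\fh}\leq \frac{C\,\|\Lambda_k\|_{L^p\to L^{p'}}}{(2k+d)^{m(\frac{1}{r}-\frac{1}{r'})+d(\frac{1}{p}-\frac{1}{p'})}}\,\|f\|_{L^r_\fv L^q_t L^p_\fh}.
\end{equation*}
Summation in $k$ then yields \eqref{eq:strichartz_estimate}.

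The step I expect to be the main obstacle is the endpoint $p=2$, admissible precisely when $m>1$ (consistent with the excluded case $(m,p)=(1,2)$). At $p=2$ the operator $\Lambda_k^\lambda$ is a plain orthogonal projection, so no decay in $k$ comes from the horizontal factor and the summability must be harvested entirely from the vertical Fourier analysis. The assumption $r\leq 2(m+1)/(m+3)$ is precisely the Stein--Tomas exponent for Fourier restriction to the sphere $\{|\lambda|=1\}\subset\fv^*$; at this endpoint the naive Hausdorff--Young step in $\fv$ must be replaced by a genuine Stein--Tomas-type restriction estimate, obtained via complex interpolation between the trivial bound and the $L^2$ spectral estimates for $-\Delta_H^\lambda$. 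Apart from this refinement the proof reduces to careful bookkeeping of the scaling factors and to checking convergence of $\sum_k\|\Lambda_k\|_{L^p\to L^{p'}}/(2k+d)^{m(1/r-1/r')+d(1/p-1/p')}$ from the known polynomial bounds on the projector norms.
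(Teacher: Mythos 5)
Your high-level plan — decompose $f*_{\mathbb{G}}\kappa_{\Sigma_\psi}$ into spectral bands indexed by $k$, estimate each band by combining the hypothesis on $\|\Lambda_k\|_{L^p\to L^{p'}}$ with Fourier estimates in the central variables, then sum — is essentially equivalent to what the paper does, since the paper's starting point (Corollary~\ref{cor:GP_link}) expresses $f*_{\mathbb{G}}\kappa_{\Sigma_\psi}$ as $\int_0^\infty e^{i\mu t}\cP_\mu(f^\mu)\psi(\mu)\,d\mu$ and $\cP_\mu$ is itself a sum over $k$ (formula~\eqref{eq:spec-proj-Htype}). Your claimed band-level estimate is the correct one. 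But there is a genuine gap in the way you propose to get the vertical exponents.

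You describe the residual estimate in the center variables as ``Hausdorff--Young in $t\in\R$ \dots and in $z\in\fv\cong\R^m$ (passing from $L^{r'}_\fv$ to $L^r_\lambda$),'' and defer to ``a genuine Stein--Tomas-type restriction estimate'' only at the endpoint $p=2$. That placement is wrong, and the Hausdorff--Young step in $z$ does not exist as such: once you freeze the band index $k$, the $\lambda$-measure is singular (surface measure on a sphere, or on the cone $\mu=|\lambda|(2k+d)$ jointly in $(\mu,\lambda)$), and Hausdorff--Young does not apply to a Fourier integral against a singular measure. The Stein--Tomas restriction for the sphere $S\subset\fv^*$ is the engine producing the factor $(2k+d)^{-m(\frac1r-\frac1{r'})}$ for \emph{every} $p\in[1,2]$, not a refinement reserved for $p=2$; the hypothesis $r\le 2(m+1)/(m+3)$ is exactly the admissibility condition for that restriction estimate and is needed throughout (it also ensures $m(\frac1r-\frac1{r'})\ge 1$, which is what makes the $k$-sum converge; see~\eqref{eq:m_bound}). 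The paper sidesteps the singular-measure issue by a different factorization of the estimate: Hausdorff--Young is applied \emph{only in the $t$-variable} (via the anisotropic Lemma~\ref{l:an-haus-young}), which keeps the remaining $\mu$-integral absolutely continuous, and the entire $\fv$-analysis is imported as a black box from the Casarino--Ciatti spectral-projector bound, Theorem~\ref{th:bound_spec_proiec_Htype}, which packages the Stein--Tomas sphere restriction internally. Your route could be made to work if you replaced the naive $z$-Hausdorff--Young by a genuine Stein--Tomas estimate uniformly in $p$ (with the attendant bookkeeping of the dilation $\lambda\mapsto\lambda/(2k+d)$ and the Plancherel weight $|\lambda|^d$), but as written the $\fv$-estimate has a hole. (A minor side remark: your stated scaling identity has the wrong sign; the paper's~\eqref{homo-twist} reads $\|\Lambda_k^\lambda\|_{p\to p'}=|\lambda|^{d(\frac1p-\frac1{p'})}\|\Lambda_k\|_{p\to p'}$.)
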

	
	Given this result, proved in Section~\ref{s:proofH}, the proof of Strichartz estimates is then reduced to the following steps: understand the solution as the Fourier extension of a measure defined on a suitable subset, prove boundedness of the  Fourier restriction and extensions, prove the equivalence between  the Fourier restriction/extension estimates with estimates on the spectral projectors. This is the object of Sections \ref{s:fr} -- \ref{s:frp}.

	\subsection{Link with the literature} 
	
	Strichartz estimates for the Schr\"odinger equation in $H$-type groups have been considered in \cite{DH05}. These Strichartz estimates are consequence of dispersive estimates available in this setting when $m>1$, cf. also \cite{BFG16} for dispersive inequalities on more general step 2 Carnot groups. The result obtained here are refined estimates for two reasons: our results are available even in the case there is no dispersion, morever we obtain estimates in mixed Lebesgue norms in space.  
	
	We mention also similar Strichartz estimates in the Grushin space \cite{GMS23} and more recent works on smoothing effect for the sub-Riemannian Schr\"o\-din\-ger equation in Heisenberg group \cite{FMRS23,FMRS24}.

	Sharp $L^p\to L^2$ estimates for the spectral projector $\Lambda_k$ for the twisted Laplacian are given in \cite{kochSpectralProjectionsTwisted2004}. Recently Jeong, Lee and Ryu \cite{jeongSharpEstimateFof2022} proved sharp $L^p\to L^q$ estimates for $\Lambda_k$. This result is an improvement on \cite{kochSpectralProjectionsTwisted2004}, since it includes $L^p\to L^q$ estimates the case when $p\neq 2$ and $q\neq 2$. \
	
	In the main results of this article, the sets of admissible indices always require $p\leq q$ and $2<p\leq r$.  These requirements comes from technical points of the proof and the constraint $p\leq q$ was already present in the previous work \cite{BBG21}. The authors of this article do not have an example which shows these requirements are necessary.

	\subsection{Plan of the paper} In Section~\ref{s:prel} we introduce $H$-type groups, while in Section~\ref{s:fheis} we introduce the elements of Fourier Analysis in  $H$-type groups needed to prove Propositions~\ref{prop:TTstarintro}, the proof being contained in Section~\ref{s:proofH}.
	Section~\ref{s:fr} contains the statement of the equivalence between the Fourier restriction problem and the estimates given in Propositions~\ref{prop:TTstarintro} (cf.\ also Proposition~\ref{prop:TTstarintro2} for the wave equation). Section~\ref{s:schro} proves Stricharts estimates as stated in Theorems~\ref{t:main1} -- \ref{t:main2}, and Section~\ref{s:wave} extends the results to the wave equations. Section~\ref{s:frs} proves the equivalence stated in Section~\ref{s:fr}. Appendices~\ref{s:app} -- \ref{s:app2} contains some technical results and comments. Appendix~\ref{s:appino} contains the proof of the inhomogeneous estimates.\\

	{\bf Acknowledgements.} Davide Barilari and Steven Flynn acknowledge the support granted by the European Union--NextGenerationEU Project ``NewSRG--New directions in sub-Riemannian Geometry'' within the Program STARS@UNIPD 2021. Davide Barilari acknowledges the support granted by the Italian PRIN Project ``Optimal Transport: new challenges across analysis and geometry'' within the Program PRIN 2022.
	
	
	\section{Preliminaries}\label{s:prel}
	Let $G$ be a step 2 Carnot group with Lie algebra $\fg=\fh\oplus \fv$ of rank $2d$ and dimension $2d+m$. In particular, $\dim \fh=2d$ and $\dim \fv=m$, and
	\begin{equation}
		[\fh, \fh]=\fv, \quad [\fh, \fv]= [\fv, \fv]=0.
	\end{equation}
	Note that the previous condition implies that $\fv$ is the center of $\fg$. For any $\mu\in \fv^*$, there is a natural skew symmetric form $\omega_\mu :\fh\times\fh\to \R$ defined by
	\begin{equation}
		\omega_\mu(X, Y)=\langle \mu, [X, Y]\rangle.
	\end{equation}
	Fix a scalar product  $g$ on $\fg$ such that $\fh\perp \fv.$  The skew symmetric operator $J_\mu: \fh\to \fh$ is defined by
	\begin{equation}
		\omega_\mu(X, Y)=g(X, J_\mu Y).
	\end{equation}
	To the Riemannian metric $g$ on $\fg$ there is a cometric $\widetilde{g}$ on $\fg^*$. We will use the cometric implicitly in the sequel.  
	\begin{definition}
		A step 2 Carnot group is \textit{$H$-type} if for all $\mu\in \fv^*$, 
		\begin{equation}
			J_\mu^2=-\|\mu\|_g^2I,
		\end{equation}
		where $I: \fh\to \fh$ is the identity map and $\|\cdot\|_g$ is the norm on $\fv^*$ induced from $g$. 
	\end{definition}
	\begin{remark}
		In the sequel, we will write $|\mu|$ for  the norm of $\mu\in \fv^*$ instead of $\|\mu\|_{g}$. 
	\end{remark}

	The choice of an orthonormal basis $e_1, \ldots, e_{2d}$ of $\fg$ and $f_1, \ldots f_m$ of $\fv$ yields, through the group exponential map $\exp_G: \fg\to G$, coordinates $(x, z)\in \R^{2d}\times\R^{m}$ on $G$ such that $p=(x, z)$ if and only if 
	$$p=\exp_G\left(\sum_{i=1}^{2d} x_ie_i+\sum_{\alpha=1}^m z_\alpha f_\alpha\right)$$
	We will identify the coordinates $(x, z)$  with elements of the group $G$ or the Lie algebra $\fg$ 
	
	Since $[\fh, \fh]=\fv$ there exists a linearly independent family of skew-symmetric $d\times d$  matrices $\{L^\alpha\}_{\alpha=1}^{m}$ such that the Lie bracket reads
	\begin{equation}
		[e_i, e_j]=\sum_{\alpha=1}^mL_{ij}^\alpha f_\alpha,\qquad i,j=1,\ldots,2d.
	\end{equation}
	In these coordinates, the left-invariant vector field $X_i$  corresponding to the Lie algebra element $e_i$ is given by
	\begin{equation}
		X_i=\partial_{x_i}-\frac{1}{2}\sum_{\alpha=1}^m\sum_{j=1}^{2d} L^\alpha_{ij}x_j\partial_{z_\alpha}, \qquad i=1,\ldots,2d.
	\end{equation}
	Furthermore, $Z_\alpha=\partial_{z_\alpha}$ for $\alpha=1,\ldots,m$, are the left invariant vector fields corresponding to the basis elements $f_\alpha$ of $\fv$. 
	
	The sub-Laplacian $\Delta_{H}$ on the $H$-type group $G$ is defined by
	\begin{equation}
		\Delta_{H}=\mathrm{div}(\nabla_{H}u)=\sum_{j=1}^{2d} X_j^2.
	\end{equation}
	where $\nabla_{H}u=\sum_{j=1}^{2d}(X_{j}u)X_{j}$ is the horizontal gradient and the divergence is computed with respect to the Haar measure.  
 	
	\subsection{The Heisenberg group} The main prototype of $H$-type group is $\heis^{d}$, that is the Lie group $\R^{2d+1}$ endowed with the group law (here  we let $y_i:=x_{i+d}$ for $i=1,\ldots d$)
	\begin{align*}
		(x, y, z)\cdot (x', y', z')=\left(x+x', y+y', z+z'+\frac{1}{2}\sum_{j=1}^d(x_jy_j'-x_j' y_j)\right).
	\end{align*}
	Its Lie algebra $\fg$ is spanned by the family of left invariant vector fields 
	$$\{X_1, \cdots, X_n, Y_1, \cdots Y_n, Z\}$$
	satisfying the commutation relations
	\begin{align*}
		[X_i, Y_i]= Z,  \qquad \forall\,  i=1, \ldots, n.
	\end{align*}
	In particular the Lie algebra can be written $\fg=\fh\oplus \fv$ with $\dim \fh=2d$ and $\dim \fv=1$ where
	$$\fh=\mathrm{span}\{X_1, \cdots, X_n, Y_1, \cdots Y_n\},\qquad \fv=\mathrm{span}\{Z\}.$$
	We refer to \cite{ABB19} for more details on Carnot groups.

		\section{Fourier Analysis and the dual space}\label{s:fheis}

		In this section $G$ is a step 2 Carnot group of $H$-type. We state some results about the Fourier Analysis on nilpotent groups. For more details we refer to \cite{CG}.
		\subsection{Dual Space of $G$} By $\widehat{G}$ we mean the set of (unitary equivalence classes of) strongly continuous unitary representations of $G$. It is given as a set by $\widehat{G}=\widehat{G}_\infty\coprod \widehat{G}_1$ where
		\begin{equation}\label{spec-res-central}
			\widehat{G}_\infty\cong\fv^*\mz, \quad \text{ and }\quad  \widehat{G}_1\cong \fh^*.
		\end{equation}
		Here $\cong$ denotes the identification given by Kirillov's orbit method, for which we refer to \cite{CG} (see also \cite[Appendix]{BBGL}). 
		Since the part $\widehat{G}_1$ has Plancherel measure zero, we will ignore it in our analysis, and simply use $\widehat{G}$ to denote the  part of the unitary dual with nonzero Plancherel measure.

		For  each $\lambda\in \widehat{G}=\fv^*\mz$, we denote $|\lambda|$ the norm induced from the inner product $g$ on $\mathfrak{g}$ used in the definition the $H$-type group. By classical linear algebra arguments, there exists an orthogonal map $T_\lambda:  \R^{2d}\to\fh$ such that 
		\begin{equation}\label{eq:diagJ}
			J_\lambda =|\lambda|T_\lambda\, J\, T_\lambda^{-1}, \quad J=\begin{pmatrix}
				0 & -I_{d\times d} \\
				I_{d\times d} & 0
			\end{pmatrix}.
		\end{equation}
		
		\begin{lemma}\label{lem:projec}
			The mapping $\alpha_\lambda: G\to \heis^{d}$ given by
			\begin{equation}
				\alpha(x, z)=\left(T_\lambda^{-1}x, \frac{\lambda\cdot z}{|\lambda|}\right), \quad (x, z)\in \fh\oplus\fv\cong G,
			\end{equation}
			is a surjective homomorphism of Lie groups. In particular, $G/\ker\alpha_\lambda$ is isomorphic to $\heis^{d}$ where $\ker \alpha_\lambda=\{z\in \fv \mid \lambda\cdot z=0\}$ is the orthogonal compliment of $\lambda$ in the center $\fv$ of $G$. 
		\end{lemma}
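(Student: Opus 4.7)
The plan is to verify directly, using the Baker--Campbell--Hausdorff formula in step-2 Carnot groups, that $\alpha_\lambda$ intertwines the group laws on $G$ and $\heis^d$. The remaining assertions (surjectivity, kernel description, and the quotient isomorphism) then follow mechanically. First, I would write the product in exponential coordinates on any step-2 Carnot group as
\[
(x,z)\cdot(x',z')=\bigl(x+x',\; z+z'+\tfrac12 [x,x']\bigr),
\]
where $[x,x']\in\fv$ is the Lie bracket inherited from $\fg$ (here $\fv$ is central, so the BCH series truncates). The Heisenberg group $\heis^d$ admits the analogous description $(u,t)\cdot(u',t')=\bigl(u+u',\, t+t'+\tfrac12\omega(u,u')\bigr)$, with $\omega$ the standard symplectic form on $\R^{2d}$ associated, up to the fixed sign convention, to the matrix $J$ appearing in \eqref{eq:diagJ}. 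Thus checking that $\alpha_\lambda$ is a homomorphism reduces to the single scalar identity
\[
\tfrac{1}{|\lambda|}\,\langle \lambda, [x,x']\rangle \;=\; \omega\bigl(T_\lambda^{-1}x,\, T_\lambda^{-1}x'\bigr),\qquad x,x'\in\fh.
\]

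To establish this identity I would feed in, in sequence, the defining property $\langle\lambda,[x,x']\rangle=g(x,J_\lambda x')$ of $J_\lambda$, the normalization $J_\lambda=|\lambda|\,T_\lambda J T_\lambda^{-1}$ from \eqref{eq:diagJ}, and finally the orthogonality of $T_\lambda$, which allows $T_\lambda^{-1}$ to be passed across the inner product $g$. The left-hand side then rewrites as $g(x,T_\lambda J T_\lambda^{-1} x')=g(T_\lambda^{-1}x,\, JT_\lambda^{-1}x')$, which is exactly the symplectic pairing $\omega(T_\lambda^{-1}x,T_\lambda^{-1}x')$ computed in the standard basis.

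With $\alpha_\lambda$ known to be a Lie group homomorphism, surjectivity is immediate: $T_\lambda^{-1}:\fh\to\R^{2d}$ is a linear isomorphism and, since $\lambda\neq 0$, the linear functional $z\mapsto\lambda\cdot z/|\lambda|$ surjects $\fv$ onto $\R$. The kernel computation is then trivial: $\alpha_\lambda(x,z)=(0,0)$ forces $x=0$ by invertibility of $T_\lambda^{-1}$, and $\lambda\cdot z=0$, identifying $\ker\alpha_\lambda$ with the orthogonal complement $\lambda^\perp$ inside the center $\fv$. The claimed isomorphism $G/\ker\alpha_\lambda\cong\heis^d$ is then the first isomorphism theorem for Lie groups, noting that $\ker\alpha_\lambda$ is a closed normal subgroup (it is central). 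The main obstacle, if one can call it that, is purely bookkeeping: matching the sign convention in the symplectic pairing coming from $J$ to the one implicit in the Heisenberg law recorded in the preliminaries, and making sure the scalings by $|\lambda|$ cancel correctly; once the orthogonality and the conjugation relation for $T_\lambda$ are put to work, the substantive computation collapses to a single line, and all the nontrivial content has been packaged into the existence of $T_\lambda$ itself.
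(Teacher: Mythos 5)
The paper states this lemma without proof, so there is nothing to compare against directly; your argument supplies the expected routine justification. Your approach is correct: reduce the homomorphism property via the truncated BCH formula on a step-2 group to the single scalar identity $\tfrac{1}{|\lambda|}\langle\lambda,[x,x']\rangle = \omega(T_\lambda^{-1}x,T_\lambda^{-1}x')$, unwind it through the defining relation for $J_\lambda$, the conjugation formula \eqref{eq:diagJ}, and orthogonality of $T_\lambda$, and then read off surjectivity and the kernel. The one point you flagged as bookkeeping is in fact a genuine (if minor) sign mismatch in the paper's own conventions: with the Heisenberg law as recorded in the preliminaries, the operator $J_Z$ attached to the central generator equals $-J$, not $J$, so the identity as written comes out with the opposite sign of $\omega$. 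This is harmless for the lemma's validity --- one can absorb the sign by post-composing $T_\lambda^{-1}$ with the orthogonal involution swapping the two $\R^d$ blocks (an automorphism of $\heis^d$ that negates the center), or equivalently by replacing $J$ with $-J$ in \eqref{eq:diagJ} --- but it is worth stating explicitly rather than leaving as ``matching sign conventions,'' since a reader verifying the identity literally will hit the discrepancy. The remaining steps (surjectivity from invertibility of $T_\lambda^{-1}$ and nonvanishing of $\lambda$, kernel identification, first isomorphism theorem using that the kernel is closed and central) are all correct.
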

		 
		Given $\lambda\in \fv^*\mz$ we can now define an irreducibe unitary representation $\pi_\mu$ of $G$ on $L^2(\R^d)$ by the formula
		\begin{equation}
			\pi_\lambda:=\pi_{|\lambda|}\circ\alpha_\lambda
		\end{equation}
		where $\pi_{|\lambda|}$ is the Schr\"{o}dinger representation of $\heis^{d}$ defined as follows 
		\begin{align*}
			\pi_{|\lambda|}(x, y, z)\Phi(\xi)=e^{i|\lambda|(z+\xi\cdot y+\frac{1}{2}x\cdot y)}\Phi(\xi+x), \quad \Phi\in L^2(\R^d).
		\end{align*}
		Note the abuse of notation of $\pi$ denoting a representation of $G$ or $\heis^d$. 
		
		For any left-invariant vector field $X$ on $G$, its infinitesimal representation is defined by
		\begin{align*}
			d\pi_\lambda(X):=\frac{d}{dt}\bigg|_{t=0} \pi_{\lambda}(\exp(tX))
		\end{align*}
		and we extend the action of $d\pi_\lambda$ to left-invariant differential operators in the natural way. In particular, by choosing a left-invariant frame for $\mathfrak{h}$ that diagonalizes $J_{\lambda}$ we can compute that $d\pi_{\lambda}(\Delta)=-H(\lambda)$,
		where $H(\lambda)$ is the \textit{Hermite operator} given by
		\begin{align}\label{eq:deltastev}
			H(\lambda)=\sum_{j=1}^{2d}(-\partial_{\xi_j}^2+|\lambda|^2\xi_j^2)
		\end{align}
		acting on $\cS(\R^d)\subset L^2(\R^d)$.  The set of Hermite functions $\Phi_\alpha^\lambda$ for $\alpha\in \N^d$ defined in Equation \eqref{eq_Hermite_func} forms an orthonormal eigenbasis for $H(\lambda)$. 
		
		Let  $\cS(G)$ be the space of Schwartz functions on $G\cong\R^{2d+m}$. 
		For $\lambda\in\widehat{G}$ define the Fourier transform by
		\begin{align*}
			\cF_G(f)(\lambda)=\int_{G}f(x, z)\pi_\lambda(x, z)dxdz
		\end{align*}
		and the scalar Fourier transform by
		\begin{align*}
			\cF_{G}(f)(\lambda, \alpha, \beta):=\int_{G}f(x, z)E_{\alpha\beta}^\lambda(x, z)dzdt.
		\end{align*}
		where $E_{\alpha\beta}^\lambda(x, z)=\langle \pi_\lambda(x, z)\Phi_\alpha^{|\lambda|}, \Phi^{|\lambda|}_\beta\rangle$.
		\begin{remark}
			Note that we choose to use $\pi_\lambda(x, z)$ instead of $\pi_\lambda(x, z)^*$ in the definition of the Fourier Transform. This choice allows for conveniences in notation.
		\end{remark}Define the Schwartz space on the dual space $\widehat{G}$ by
		\begin{equation}\label{eq:schwartz_space}
			\cS(\widehat{G}):=\{\cF_G(f): f\in \cS(G)\}.
		\end{equation}
		\begin{remark}
			We may think of any element $\theta\in \cS(\widehat{G})$ either as a family of operators $\theta=\{\theta(\lambda): {\lambda\in \fv^*\mz}\}$ parametrized on $\fv^*\mz$ or as a scalar valued function  $\theta(\lambda, \alpha, \beta)=\langle \theta(\lambda)\Phi_\alpha^\lambda, \Phi_\beta^\lambda\rangle$ defined on $(\fv^*\mz)\times\N^{2n}$.
		\end{remark}
		The inverse Fourier transform for $\theta\in \cS(\widehat{G})$ is given by 
		\begin{align*}
			\cF_{G}^{-1}(\theta)(x, z)&=(2\pi)^{-d-m}\int_{\fv^*}\trace{\pi_\lambda(x, z)^*\theta(\lambda)}|\lambda|^dd\lambda\\
			&=(2\pi)^{-d-m}\sum_{\alpha, \beta\in \N^d}\int_{\fv^*}\theta(\lambda, \alpha, \beta)\overline{E^\lambda_{\alpha\beta}(x, z)}|\lambda|^dd\lambda
		\end{align*}
		 
		The space $L^{2}(\widehat{G})$ is a Hilbert space with the scalar product
		\begin{equation}
			\langle \theta_1, \theta_2\rangle_{L^2(\widehat{G})}:=\int_{\fv^*}\trace{\theta_1(\lambda)^*\theta_2(\lambda)}d\sigma_{\widehat{G}}(\lambda)
		\end{equation}
		where we set  $d\sigma_{\widehat{G}}(\lambda)=(2\pi)^{-d-m}|\lambda|^d d\lambda.$
		Then it holds
		\begin{align}
			 \langle \cF_G(f), \cF_G(g)\rangle_{L^2(\widehat{G})}=\langle f, g\rangle_{L^2(G)}.
		\end{align}
		Notice that the Fourier transform $\cF_G$ intertwines the sub-Laplacian and Hermite operator as follows:
		\begin{align*}
			\cF_G(\Delta_H f)(\lambda)=-\cF_G(f)(\lambda)\circ H(\lambda).
		\end{align*}
		
		\subsection{Extended Space}
		Let $\mathbb{G}=\R\times G$ be the extended space. Since $\widehat{G}\cong \fv^*\mz$, we define  $\widehat{\mathbb{G}}:=\widehat{\mathbb{R}}\times (\fv^*\mz)$. The set of Schwartz functions $\cS(\mathbb{G}$) is defined as usual. 
		
		We define the Fourier transform on $\mathbb{G}$ for all $f\in S(\mathbb{G})$ and $(\mu, \lambda)\in \bDhat$ by
		\begin{align*}
			\cF_{\mathbb{G}}(f)(\mu,  \lambda):=\int_{\R\times G}f(t, x, z)e^{i\mu s}\pi_\lambda(x, z)\,dx\, dz\, dt
		\end{align*}
		and the scalar Fourier transform on $\mathbb{G}$ for $\alpha, \beta\in \N^d$ by
		\begin{align*}
			\cF_{\mathbb{G}}(f)(\mu, \lambda, \alpha, \beta):=\int_{\R\times G}f(t, x, z)e^{i\mu t}E_{\alpha\beta}^{\lambda}(x, z)\,dx\, dz\, dt.
		\end{align*}
		The respective Fourier Inversion theorems on $\mathbb{G}$: for all $f\in S(\mathbb{G})$ 
		\begin{align*}
			f(t, x, z)&=(2\pi)^{-d-m-1}\int_{\R\times\fv^*}e^{i\mu t}\tr\left(\pi_\lambda(x, z)^*\cF_{\mathbb{G}}(f)(\mu, \lambda)\right)\, |\lambda|^dd\lambda\, d\mu\\
			&=(2\pi)^{-d-m-1}\sum_{\alpha, \beta\in \N^d}\int_{\R\times\fv^*}\cF_{\mathbb{G}}(f)(\mu, \lambda, \alpha, \beta)e^{i\mu t}\overline{E_{\alpha\beta}^{\lambda}(x, z)}\, |\lambda|^dd\lambda\, d\mu.
		\end{align*}
		 
		\subsection{Heisenberg fan and the dual surface}\label{sec:dual_parab}
		Consider the Schr\"{o}dinger operator $i\partial_t-\Delta_{\R^n}$ in the Euclidean space. Denoting by $\cF_{\R\times\R^n}$  the Euclidean Fourier transform, we have
		\begin{align*}
			\cF_{\R\times\R^n}((i\partial_t-\Delta_{\R^n})f)(\tau, \xi)=\cF_{\R\times\R^n}f(\tau, \xi)(-\tau+|\xi|^2).
		\end{align*}
		Then the \textit{dual paraboloid} $\Sigma_{\R^n}$ for the Schr\"{o}dinger operator associated to $\Delta_{\R^n}$ is the zero locus in $\R\times\R^n$ of the symbol $-\tau+|\xi|^2$, that is the subset 
		\begin{equation}\label{eq:sigmarn}
			\Sigma_{\R^n}:=\{(\tau, \xi)\in \R\times\R^n: -\tau+|\xi|^{2}=0\}.
		\end{equation}
		\begin{remark} \label{rem:pistar}
			One can  endow $\Sigma_{\R^n}$ with the measure $d\Sigma_{\R^n}$ given by the pull-back measure of $\R^{n}$ through the projection $\pr:\Sigma_{\R^n}\to \R^{n}$ where  $\pr(\tau,\xi)=\xi$. Given $f:\R^{n}\to \R$, setting $F=\pr^{*}f:=f \circ \pr$ we have the identity 
			$$\|f\|_{L^{2}(\R^{n})}=\|F\|_{L^{2}(\Sigma_{\R^{n}},d\Sigma_{\R^{n}})}.$$ 
		\end{remark}
		To build the analogue object in the $H$-type group case, observe that the operator-valued symbol of the Schr\"{o}dinger operator $i\partial_t-\Delta_{H}$ is the operator $-\mu I +H(\lambda)$ acting on $L^2(\R^d)$, where $H(\lambda)$ is the Hermite operator defined in \eqref{eq:deltastev}. That is the identity
		\begin{equation}
			\cF_{\D}((i\partial_t-\Delta_{H})f)(\mu, \lambda)=\cF_{\D}f(\mu, \lambda)(-\mu I+H(\lambda)).
		\end{equation}
		The set \eqref{eq:sigmarn} in this context is replaced by the \emph{Heisenberg fan} defined by
		\begin{align*}
			\Sigma:=\{(\mu, \lambda)\in \widehat{\R}\times (\fv^*\mz)\mid  \mu=|\lambda| (2k+d), \ \text{for some }k\in \N\},
		\end{align*}
		that is the set of pairs $(\mu, \lambda)$ for which $\Pi(\mu,\lambda)=\ker (-\mu I+H(\lambda))$ is non-trivial. The subspace 
		$\Pi(\mu,\lambda)\subset L^2(\R^d)$ is finite-dimensional and explicitly described by
		\begin{equation}\label{eq:emulambda}
			\Pi(\mu,\lambda)=\mathrm{span}\{\Phi^{\lambda}_{\alpha}\mid \alpha\in \N^{n}\ \text{s.t.}\ \mu=|\lambda| (2|\alpha|+d)\}.
		\end{equation}
		Notice that  we can decompose
		$$\Sigma=\bigcup_{k\in \N} \Sigma _{k},\qquad \Sigma_{k}:=\{(\mu, \lambda)\in \widehat{\R}\times (\fv^*\mz)\mid  \mu=|\lambda| (2k+d)\}.$$

		\begin{remark}
			We may think of any element $\Theta\in \cS(\widehat{\mathbb G})$, defined analogously as in \eqref{eq:schwartz_space}, either as a family of operators $\Theta=\{\Theta(\mu,\lambda): (\mu,\lambda)\in\widehat{\R}\times  \fv^*\mz\}$ parametrized on $\widehat{\R}\times(\fv^*\mz)$ or as a scalar valued function  $\Theta(\mu,\lambda, \alpha, \beta)$ defined on $\R \times (\fv^*\mz)\times\N^{2d}$.
		\end{remark}

		We now introduce $ d\Sigma_k$ and $ d\Sigma$ as continuous linear functionals $\cS(\widehat{\mathbb G})\to \R$.
		
		\begin{align}\label{eq:dSigmak}
			\langle d\Sigma_k, \Theta\rangle
			=\frac{1}{(2\pi)^{d+m}}\sum_{|\alpha|=k}\int_{\fv^*} \Theta\left(|\lambda|(2k+d), \lambda, \alpha, \alpha\right)|\lambda|^d d\lambda.
		\end{align}
		where given $\Theta: \bDhat\to \cL_1$ we denote $\Theta\left(\mu, \lambda, \alpha, \beta\right)=\langle \Theta\left(\mu, \lambda\right)\Phi_\alpha^\lambda, \Phi_\beta^\lambda\rangle $. This implies
		\begin{align}\label{eq:dSigmak2}
			\langle d\Sigma, \Theta\rangle
			:=
			\frac{1}{(2\pi)^{d+m}}\sum_{\substack{\alpha\in \N^d }}\int_{\fv^*} \Theta\left(|\lambda|(2|\alpha|+d), \lambda,  \alpha, \alpha\right)|\lambda|^{d} d\lambda
		\end{align}
		
		\begin{remark}
			Formulas \eqref{eq:dSigmak} and \eqref{eq:dSigmak2} can be seen as operator-valued measures, in the sense of \cite{PG91,FF20}. We
			denote by $C_{0}(\bDhat, \cL_1)$ the set of continuous maps $\Theta: \bDhat\to \cL_1$ vanishing at infinity, where  $\cL_1$ is  the Banach space of trace-class operators on $L^2(\R^n)$. By a measure we mean a continuous linear functional $C_{0}(\bDhat, \cL_1)\to \R$. 
			
			Since we are using the Fourier transform extensively, we henceforth restrict to the Schwartz space $\cS(\bDhat, \cL_1)\subset C_{0}(\bDhat, \cL_1)$. 
		\end{remark}
		
		We now motivate the definition of $d\Sigma$. 
		If $\pr: \bDhat\to \fv^*\mz$ denotes the projection $(\mu, \lambda)\mapsto \lambda$ onto the second factor, then we can define the scalar measure on $\Sigma_k$ given by $
		d{\sigma_k}:=\pr|_{\Sigma_k}^*(d\sigma_{\widehat{G}})$
		where $d\sigma_{\widehat{G}}(\lambda)=(2\pi)^{-d-m}|\lambda|^d d\lambda$ is the Plancherel measure on $\widehat{G}\cong \fv^*\mz$. That is 
		\begin{equation}\label{eq:parab_measure_k_scalar}
			d\sigma_k(\mu, \lambda)=\frac{1}{(2\pi)^{d+m}}\delta_0(\mu-|\lambda|(2k+d))|\lambda|^d d\lambda d\mu
		\end{equation}  where $\delta_0$ is the Dirac delta function, and $d\mu$ is the Lebesgue measure on $\widehat{\R}$. 
		Equivalently, $d\sigma_k$ is defined so that, for  $F: \Sigma_k\subset \bDhat\to \C$, 
		\begin{align*}
			\int_{\Sigma_k}F(\mu, \lambda)d\sigma_k(\mu, \lambda)=\frac{1}{(2\pi)^{d+m}}\int_{\fv^*}F(|\lambda|(2k+d), \lambda)|\lambda|^d d\lambda.
		\end{align*}
		
		The operator-valued measure $d\Sigma_k$ we are defining on $\Sigma_k$ encodes the restriction to those eigenvalues of $H(\lambda)$ of height equal to $k$: more precisely if $P_k(\lambda): L^2(\R^d)\to \Pi(\mu, \lambda)$ is the eigenprojector for the Hermite operator $H(\lambda)$, we can represent $d\Sigma_k$ as follows
		\begin{align*}
			d\Sigma_k(\mu, \lambda)&:=P_k(\lambda)\otimes d\sigma_k(\mu, \lambda).\\
			&=\frac{1}{(2\pi)^{d+m}}P_k(\lambda)\delta_0(\mu-|\lambda|(2k+d))|\lambda|^dd\lambda d\mu 
		\end{align*}
		we may then rewrite formula \eqref{eq:dSigmak} as
		\begin{align*}
			\langle d\Sigma_k, \Theta\rangle=\int_{\bDhat}\trace{\Theta(\mu, \lambda)d\Sigma_k(\mu, \lambda)}.
		\end{align*}
		This formula requires some interpretation: Since $d\Sigma_k(\mu, \lambda)=P_k(\lambda)\otimes d\sigma_k(\mu, \lambda)$ the integrand is the concatenation of the trace-class operator $\Theta(\mu, \lambda)P_k(\lambda)$ with the measure $d\sigma_k$. Taking the trace of the operator-valued integrand produces a scalar valued measure which is then integrated over $\bDhat=\widehat{\R}\times(\fv^*\mz).$
		
		\begin{remark}
			Let $\pr: \bDhat\to \widehat{G}$ given by $(\mu, \lambda)\mapsto \lambda$ and let $\theta: \widehat{G}\to \cL(L^2(\R^d))$, . If $\Theta: \Sigma\subset \bDhat\to \cL(L^2(\R^d))$ is defined such that for each $k\in \N$, it holds that $\Theta =\theta\circ\pr|_{\Sigma}$, then 
			\begin{equation}
				\|\Theta\|_{L^2(d\Sigma)}=\|\theta\|_{L^2(\widehat{G})}.
			\end{equation}
			This is the analogue property of Remark~\ref{rem:pistar} and is proved in Lemma~\ref{l:postotto2}.
		\end{remark}
		
		\smallskip

		The (inverse) Euclidean Fourier transform $\cF_{\R^n}^{-1}$ of a compactly supported Borel measure $\mu$ on $\R^n$ is defined by
		\begin{align*}
			\cF_{\R^n}^{-1}(\mu)(\xi)=\int_{\R^n}e^{ix\cdot\xi}d\mu(x).
		\end{align*}
		Given a bump function $\psi\in C^\infty_c((0, \infty), [0, 1])$ set 
		$d\Sigma_{\psi}:=\psi(\mu)d\Sigma$. 
		By analogy, we may define the group Fourier transform of the operator valued measure $d\Sigma_{\psi}$ as follows:
		\begin{align*}
			\cF_{\D}^{-1}(d\Sigma_\psi)=\int_{\R\times\fv^*}e^{i\mu t}\trace{\pi_\lambda(x, z)^*d\Sigma_\psi(\mu ,\lambda)}.
		\end{align*}
		As described in Section~\ref{s:frs},  the group Fourier transform $\cF_{\D}$ extends by duality to a map between the spaces tempered distributions $\cS'(\D)\to \cS'(\bDhat)$.   We therefore write
		\begin{equation}\label{eq:gsigmastev}
			\kappa_{\Sigma_{\psi}}:=\cF_{\mathbb{G}}^{-1}(d\Sigma_{\psi})
		\end{equation}
		for the corresponding tempered distribution. In fact, we can say more about $\kappa_{\Sigma_\psi}$:

		\begin{proposition}\label{p:newlab}
			The tempered distribution $\kappa_{\Sigma_\psi}$ is a bounded function and it satisfies the representation formula
			\begin{equation} \label{eq:gsigmapsi}
				\kappa_{\Sigma_\psi}(t, x, z)
				=\int_0^\infty e^{i\mu t}\kappa_\mu(x, z)\psi(\mu)d\mu,
			\end{equation}
			where $\kappa_\mu$ is the right convolution kernel of the spectral projector $\cP_\mu=\cP^{-\Delta}_\mu$, i.e. the tempered distribution for which
			\begin{equation} \label{eq:gsigmapsi2}
				\cP_\mu f(x, z)= f*_G \kappa_\mu(x, z).
			\end{equation}  
		\end{proposition}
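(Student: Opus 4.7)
The plan is to compute $\kappa_{\Sigma_\psi}=\cF_\D^{-1}(\psi\, d\Sigma)$ directly from the definition, rewrite the result as an iterated integral with $\mu$ as the outermost variable, and identify the inner integrand as the spectral projector kernel $\kappa_\mu$; boundedness will then follow from the compact support of $\psi$ in $(0,\infty)$. Concretely, I would apply $\cF_\D^{-1}$ to the operator-valued measure $\psi\, d\Sigma = \sum_k \psi\, d\Sigma_k$ and integrate out the Dirac delta in $\mu$ using the explicit form of $d\Sigma_k$; this yields, at least formally,
\begin{align*}
\kappa_{\Sigma_\psi}(t,x,z) = \frac{1}{(2\pi)^{d+m}} \sum_{k\in\N} \int_{\fv^*} e^{i|\lambda|(2k+d)t}\, \psi(|\lambda|(2k+d))\, \trace{\pi_\lambda(x,z)^* P_k(\lambda)}\, |\lambda|^d\, d\lambda.
\end{align*}

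Next, I would pass to polar coordinates $\lambda = r\omega$ on $\fv^*\setminus\{0\}$ and substitute $\mu = r(2k+d)$ inside each summand to move $\mu$ to the outer variable; using the compact support of $\psi$ to apply Fubini and swap the $k$-sum with the $\mu$- and angular integrations, this gives
\begin{align*}
\kappa_{\Sigma_\psi}(t,x,z) = \int_0^\infty e^{i\mu t}\, \psi(\mu)\, \kappa_\mu(x,z)\, d\mu,
\end{align*}
where $\kappa_\mu(x,z)$ is the resulting series in $k$ of spherical averages of $\trace{\pi_{(\mu/(2k+d))\omega}(x,z)^* P_k((\mu/(2k+d))\omega)}$ with weight $\mu^{d+m-1}/(2k+d)^{d+m}$. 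To identify this $\kappa_\mu$ with the right convolution kernel of the spectral projector $\cP_\mu$ of $-\Delta_H$, I would combine $\cF_G(-\Delta_H f)(\lambda) = \cF_G(f)(\lambda)\,H(\lambda)$ with the Hermite spectral decomposition $H(\lambda) = \sum_k |\lambda|(2k+d) P_k(\lambda)$: functional calculus then gives $\phi(-\Delta_H)f = f *_G K_\phi$ with $\cF_G(K_\phi)(\lambda) = \sum_k \phi(|\lambda|(2k+d)) P_k(\lambda)$, and matching this against $\int_0^\infty \phi(\mu)\cP_\mu\, d\mu$ pins down $\cF_G(\kappa_\mu)(\lambda) = \sum_k \delta_0(\mu-|\lambda|(2k+d))\, P_k(\lambda)$ distributionally, whose inverse Fourier transform reproduces the inner series.

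For boundedness, I would estimate the inner series uniformly in $(x,z)$: since $\|\pi_\lambda(x,z)\|_{\mathrm{op}} = 1$ and the rank of $P_k(\lambda)$ grows like $k^{d-1}$, while the weight $(2k+d)^{-(d+m)}$ provides polynomial decay of order $d+m$, the series converges uniformly for $\mu$ in any compact subset of $(0,\infty)$ whenever $m\geq 1$; combined with the compact support of $\psi$, this yields $\kappa_{\Sigma_\psi} \in L^\infty(\D)$. The main technical point will be rigorously justifying the Fubini-type reshufflings (delta integration, polar substitution, exchange of the $k$-sum with $\mu$- and angular integrals), all of which depend on the compact support of $\psi$, together with the distributional identification of $\kappa_\mu$, which rests on the joint spectral decomposition of $-\Delta_H$ via the family $\{H(\lambda)\}$.
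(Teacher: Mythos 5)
Your proposal follows the same pipeline as the paper: compute $\cF_\D^{-1}(\psi\, d\Sigma)$ from the definition of the operator-valued measure, integrate out the delta, pass to polar coordinates $\lambda=\rho\omega$, substitute $\mu=\rho(2k+d)$, apply Fubini (justified by compact support of $\psi$), and control the resulting series in $k$ uniformly. The one place you diverge is in \emph{identifying} the inner integrand with $\kappa_\mu$: you re-derive the spectral projector kernel from first principles (functional calculus on $-\Delta_H$ via $H(\lambda)=\sum_k|\lambda|(2k+d)P_k(\lambda)$, then a distributional disintegration $\cF_G(\kappa_\mu)(\lambda)=\sum_k\delta_0(\mu-|\lambda|(2k+d))P_k(\lambda)$), whereas the paper simply reads off the explicit formula for $\kappa_\mu$ from Proposition~\ref{def:GSlice}, which is in turn taken from Casarino--Ciatti's closed-form spectral decomposition of $\cP_\mu$ on $H$-type groups (Equation~\eqref{eq:spec_proj_alt}). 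Your route is slightly more self-contained but morally equivalent; the paper's route is shorter because it leans on the cited explicit formula and then only needs to match two already-written expressions term by term. Your boundedness argument ($\|\pi_\lambda(x,z)\|_{\mathrm{op}}=1$, $\mathrm{rank}\,P_k(\lambda)=\binom{k+d-1}{d-1}\sim k^{d-1}$, weight $(2k+d)^{-(d+m)}$, so the terms decay like $k^{-m-1}$) is exactly the paper's estimate, just phrased in terms of rank rather than the explicit bound $|e_k^\lambda(x,z)|\le\binom{k+d-1}{d-1}$.
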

		We stress that $\cP_\mu$ denotes the generalized spectral projector defined in Equation \eqref{eq:spec_proj_alt} for generalized eigenvalue $\mu$ of the sub-Laplacian $-\Delta_{H}$ on the $H$-type group $G$. The expression of $\kappa_\mu$ is given explicitly in Proposition \ref{def:GSlice}. We observe that \eqref{eq:gsigmapsi} follows
		comparing the expression for $\kappa_{\Sigma_\psi}$ in Proposition \ref{prop:GSigma} to that of $\kappa_\mu$ in Proposition \ref{eq:GSlice} and apply Fubini's Theorem. 
		
		If we denote the convolution on $\D$ is given by 
		\begin{equation}
			f*_\D g(t, w)=\int_{\D}f(t-s, w\cdot_G v^{-1})g(s, v)dsdv=\int_{\D}f(s, v)g(t-s, v^{-1}\cdot_G w)dsdv
		\end{equation}
		where $\cdot_G$ is the group product on $G$ we obtain combining \eqref{eq:gsigmapsi} and \eqref{eq:gsigmapsi2}  we obtain the following result.
		\begin{corollary} \label{cor:GP_link}For any $f\in \cS(\D)$, we have
			\begin{equation}
				f*_{\mathbb{G}} \kappa_{\Sigma_\psi}(t, x, z)=\int_0^\infty e^{i\mu t}\cP_\mu(f^{\mu})(x, z)\psi(\mu)d\mu,
			\end{equation} 
			where $\cP_\mu=\cP_\mu^{-\Delta}$ and where we denote
			\begin{equation}\label{eq:fourier_trans_central0}
				f^{\mu}(x,z)=\int_{\mathbb{R}}f(t,x, z)e^{-i\mu t}dt.
			\end{equation}
		\end{corollary}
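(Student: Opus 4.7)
The plan is to substitute the representation formula \eqref{eq:gsigmapsi} for $\kappa_{\Sigma_\psi}$ into the definition of the $\mathbb{G}$-convolution and then swap the order of integration using Fubini. Write, for $f \in \cS(\mathbb{G})$,
\begin{align*}
f *_{\mathbb{G}} \kappa_{\Sigma_\psi}(t, w)
= \int_{\mathbb{R}}\int_G f(t-s, w \cdot_G v^{-1}) \left(\int_0^\infty e^{i\mu s} \kappa_\mu(v) \psi(\mu)\, d\mu\right) ds\, dv,
\end{align*}
where I use the second form of the $\mathbb{G}$-convolution from the excerpt. Interchanging the $d\mu$-integral with the $ds\, dv$-integral and performing the change of variable $r = t - s$ in the $s$-integral extracts the factor $e^{i\mu t}$ and converts the remaining inner integral in $r$ into $f^{\mu}(w \cdot_G v^{-1})$, the partial Fourier transform of $f$ in the $t$-variable defined in \eqref{eq:fourier_trans_central0}. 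The resulting $G$-integral is precisely $f^{\mu} *_G \kappa_\mu(w)$, which by Proposition \ref{p:newlab} equals $\cP_\mu(f^\mu)(w)$, giving the stated identity.

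The main point to justify is the application of Fubini. This is where I would spend care: the measure $\psi(\mu)\,d\mu$ is compactly supported in $(0,\infty)$ by construction, and Proposition \ref{p:newlab} already asserts that $\kappa_{\Sigma_\psi}$ is a bounded function, so the $ds\, dv$-integral makes sense pointwise in $\mu$. Since $f \in \cS(\mathbb{G})$, the map $s \mapsto f(t-s, w\cdot_G v^{-1})$ is Schwartz and $f^\mu$ is a Schwartz function of $(x,z)$ uniformly for $\mu$ in a compact set, so the integrand is absolutely integrable over $\mathrm{supp}(\psi) \times \mathbb{R} \times G$ and Fubini applies.

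The only remaining subtlety is that $\kappa_\mu$ is a priori a tempered distribution rather than a genuine function, so one should read the convolution $f^\mu *_G \kappa_\mu$ in the distributional sense; Proposition \ref{p:newlab} (together with the explicit expression for $\kappa_\mu$ to be given in Proposition \ref{def:GSlice}) ensures that pairing $\kappa_\mu$ with the Schwartz function $v \mapsto f^\mu(w \cdot_G v^{-1})$ coincides with the spectral projector $\cP_\mu$ applied to $f^\mu$ and evaluated at $w$. With this interpretation, no additional hypothesis on $f$ beyond $f \in \cS(\mathbb{G})$ is needed, and the computation above is the whole proof. I do not expect any serious obstacle beyond bookkeeping; the statement is essentially a direct rearrangement of \eqref{eq:gsigmapsi} and \eqref{eq:gsigmapsi2}.
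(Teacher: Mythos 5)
Your proposal is correct and follows essentially the same route as the paper: substitute the representation formula \eqref{eq:gsigmapsi} for $\kappa_{\Sigma_\psi}$ into the convolution, interchange the $d\mu$-integral with the $\mathbb{G}$-integral, recognize the partial Fourier transform $f^\mu$ after a change of variable in $s$, and conclude via \eqref{eq:gsigmapsi2}. The only cosmetic differences are that you use the first of the two equivalent expressions for $f *_{\mathbb{G}} g$ while the paper uses the second, and you spell out the Fubini justification and the distributional reading of $f^\mu *_G \kappa_\mu$ (which the paper leaves implicit, relying on Proposition~\ref{def:GSlice} showing $\kappa_\mu$ is in fact a bounded function).
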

		\begin{proof} Let $w=(x, z)$ and $v=(x', z')$ represent elements of $G$. Also let $\psi_+\in C_c^\infty(\R)$ be the continuation of $\psi\in C_c^\infty((0, \infty))$ by zero to $\R$. 
			\begin{align*}
				f*_\D \kappa_{\Sigma_\psi}(t, w)
				&=
				\int_{\D}f(s, v)\kappa_{\Sigma_\psi}(t-s, v^{-1}\cdot_G w)dsdv\\
				&=
				\int_{\D}\int_\R e^{i\mu (t-s)}f(s, v)\kappa_\mu(v^{-1}\cdot_G w)\psi_+(\mu)d\mu dsdv\\
				&=
				\int_{\R}\int_{G}e^{i\mu t}f^\mu(v)\kappa_\mu(v^{-1}\cdot_Gw)\psi_+(\mu) dv d\mu\\
				&=
				\int_\R e^{-\mu t}f^\mu*_G \kappa_\mu(w)\psi_+(\mu)d\mu \\
				&=\int_\R e^{i\mu t}\cP_\mu(f^\mu)(x, z)\psi_+(\mu)d\mu \qedhere
			\end{align*}
		\end{proof}
		
		In what follows we prove the main result and the applications assuming Proposition~\ref{p:newlab}, whose details are contained in Section~\ref{s:frs}.
		
		\section{$H$-type groups: Proof of Propositions \ref{prop:TTstarintro} and \ref{prop:TTstarintro2}}\label{s:proofH}
		Since Propositions \ref{prop:TTstarintro} is a particular case of Proposition \ref{prop:TTstarintro2}, it is sufficient to prove the latter. This is the goal of the rest of the section.
		\begin{proof}[Proof of Proposition \ref{prop:TTstarintro2}]

			By Corollary \ref{cor:GP_link} we have
			\begin{equation}\label{eq:schro-kernelH} 
				f*_\mathbb{G} \kappa_{\Sigma_{\psi}}(t, x, z)=\int_0^\infty e^{i\mu t}\cP_\mu(f^{\mu})(x, z)\psi(\mu)d\mu.
			\end{equation}
			where 
			$\cP_\mu=\cP_\mu^{-\Delta}$ is the generalized spectral projector given in Equation \eqref{eq:spec-proj-Htype}  for eigenvalue $\mu$ of the (minus) sub-Laplacian $-\Delta_{H}$ of $G$. (For simplicity we remove the $H$ in the notation of the Laplacian when writing the spectral projector $P_\mu^{-\Delta}$.)
			
			We can rewrite identity \eqref{eq:schro-kernelH} as 
			\begin{equation}\label{eq:schro-kernel3}
				f*_\mathbb{G} \kappa_{\Sigma_{\psi}}(t, x, z)=\mathcal{F}_{\mu\mapsto -t}\left[\cP_\mu(f^{\mu})(x, z)\psi_+(\mu)\right]
			\end{equation}
			where $\cF_{\mu\mapsto -t}$ denotes the Euclidean Fourier transform in the $\mu$ variable evaluated at $t$. 
			Suppose  now $(m,p)\neq (1,2)$ and
			\begin{itemize}
				\item[(i)] $1\leq q, r\leq p\leq 2$,
				\item[(ii)] $1\leq r\leq 2(m+1)/(m+3)$. 
			\end{itemize} 
			 
		Since $q\leq p\leq 2\leq p'$, we may apply the anisotropic Hausdorff-Young inequality (Lemma~\ref{l:an-haus-young}) to Equation \eqref{eq:schro-kernel3} to  obtain  
		\begin{equation}\label{eq:estimate1}
			\|f*_{\mathbb{G}} \kappa_{\Sigma_{\psi}}\|_{L^{r'}_\fv L^{q'}_tL^{p'}_\fh}
			\leq 
			\|\cP_\mu(f^{\mu})\psi_+(\mu)\|_{ L^{r'}_\fv L^{q}_\mu L^{p'}_\fh}.
		\end{equation}
		where $\widehat{\R}_\mu$ is the dual of $\R_t$.
		Then by an application of the Minkowski Inequality,  the term on the right hand side in \eqref{eq:estimate1} is less than or equal to 
		\begin{equation*}
			\|\cP_\mu(f^{\mu})\psi_+(\mu)\|_{L^{q}_\mu L^{p'}_\fh L^{r'}_\fv}.
		\end{equation*}
		By Theorem \ref{th:bound_spec_proiec_Htype}, there exists $C>0$ such that  
		\begin{align}\label{eq:spec-proj-estimate}
			\|\cP_\mu(f^{\mu})\psi_+(\mu)\|_{L^{q}_\mu L^{p'}_\fh L^{r'}_\fv}
			&\leq
			C\bigg\|\|f^\mu\|_{L^p_\fh L^r_\fv}\psi_+(\mu) \mu^{m(\frac{1}{r}-\frac{1}{r'})+d(\frac{1}{p}-\frac{1}{{{p'}}})-1}\bigg\|_{L^q_\mu} \\
			&\times\left(\sum_{k=0}^\infty(2k+d)^{-m(\frac{1}{r}-\frac{1}{r'})-d(\frac{1}{p}-\frac{1}{{{p'}}})}\|\Lambda_k\|_{L^p_\fh\to L^{p'}_\fh}\right).
		\end{align}
		whenever the right side is finite. Assume the series in \eqref{eq:spec-proj-estimate} is convergent. 
		Then, applying the H\"{o}lder Inequality on the first term on the right, for $\frac{1}{b}+\frac{1}{q'}=\frac{1}{q}$
		\begin{equation}\label{eq:holder1}
			\bigg\|\|f^\mu\|_{L^{p}_{\fh} L^{r}_{\fv}}\psi_+(\mu)\mu^{m(\frac{1}{r}-\frac{1}{r'})+d(\frac{1}{p}-\frac{1}{{{p'}}})-1}\bigg\|_{L^q_\mu}
			\leq
			\|f^\mu\|_{L^{q'}_\mu L^p_\fh L^r_\fv }\|\psi_+(\mu)\mu^{m(\frac{1}{r}-\frac{1}{r'})+d(\frac{1}{p}-\frac{1}{p'})-1}\|_{L^b_\mu}.
		\end{equation}

		The second term on the right hand side of \eqref{eq:holder1} is clearly finite. Regarding the other term, Lemma~\ref{l:an-haus-young} together with a couple applications of the Minkowski Inequality implies
		\begin{align*}
			\|f^\mu\|_{L^{q'}_\mu L^p_\fh L^r_\fv}
			&\leq 
			\|f^\mu\|_{L^p_\fh L^r_\fv L^{q'}_\mu}, \quad \text{since } q'\geq p, r,\\
			&\leq
			\|f\|_{L^p_\fh L^r_\fv L^{q}_t}, \quad \text{by Hausdorff-Young and } q\leq 2, \\
			&\leq 
			\|f\|_{L^r_\fv  L^q_tL^p_\fh}, \quad \text{since } p\geq r, q. \qedhere
		\end{align*}
	\end{proof}
	The result then follows once the convergence of the series in in \eqref{eq:spec-proj-estimate} is proved, which is the content of the next proposition.

	\begin{proposition}\label{tm:TT*-Htype}
		Let $m\geq 1$ and $p\in [1, 2]$ with $(m, p)\neq (1, 2)$. Given $r$ such that  $1\leq r\leq 2(m+1)/(m+3)$,  
		the folllowing series is convergent
		\begin{equation}\label{eq:series_Htype1ante}
			\sum_{k=0}^\infty\frac{\|\Lambda_k\|_{L^p\to L^{p'}}}{(2k+d)^{m(\frac{1}{r}-\frac{1}{r'})+d(\frac{1}{p}-\frac{1}{{p'}})}}.
		\end{equation}
	\end{proposition}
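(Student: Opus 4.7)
The plan is to reduce the convergence of the series in \eqref{eq:series_Htype1ante} to a direct comparison of exponents once the sharp $L^p \to L^{p'}$ bound for the twisted-Laplacian spectral projector is inserted. Concretely, I would appeal to the sharp Hermite/twisted-Laplacian projector estimates of Koch--Ricci \cite{kochSpectralProjectionsTwisted2004} and Jeong--Lee--Ryu \cite{jeongSharpEstimateFof2022}, which yield a polynomial bound
$$\|\Lambda_k\|_{L^p \to L^{p'}} \leq C\,(2k+d)^{\sigma(p)},$$
with a known piecewise-linear exponent $\sigma(p)$ reflecting the Stein--Tomas threshold for Hermite projectors on $\R^d$. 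With this in hand, the $k$-th term of \eqref{eq:series_Htype1ante} is dominated by $(2k+d)^{\sigma(p) - d(1/p - 1/p') - m(1/r - 1/r')}$, and the whole question reduces to proving
$$\sigma(p) - d\Big(\tfrac{1}{p} - \tfrac{1}{p'}\Big) - m\Big(\tfrac{1}{r} - \tfrac{1}{r'}\Big) < -1.$$

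Next I would treat the two contributions separately. The sharp Hermite result gives $\sigma(p) = d(1/p - 1/p') - 1$ throughout the relevant Stein--Tomas range, so the first combination $\sigma(p) - d(1/p-1/p')$ equals $-1$ there; in the complementary regime near $p=2$ the expression is strictly negative and in particular $\leq 0$, with the trivial endpoint $\sigma(2) = 0$. The remaining factor $m(1/r - 1/r') = m(2/r - 1)$ must then compensate. The hypothesis $r \leq 2(m+1)/(m+3)$ is precisely the Stein--Tomas threshold in dimension $m+1$ applied to the central direction, and gives
$$m\Big(\tfrac{1}{r} - \tfrac{1}{r'}\Big) \geq \frac{2m}{m+1},$$
which exceeds $1$ whenever $m \geq 2$. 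Combining this with the negativity of the Hermite piece yields a strict bound below $-1$ in every configuration allowed by the hypotheses, and hence convergence of the series.

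The one delicate point is the borderline $(m,p)=(1,2)$: there the constraint forces $r = 1$, so that $m(1/r - 1/r') = 1$, and simultaneously $\sigma(2) - d(1/2 - 1/2) = 0$, so the exponent collapses to exactly $-1$ and the series becomes the harmonic one $\sum 1/k$, which diverges. This is the only configuration meeting the boundary, and it is precisely the case excluded in the statement; for any $m \geq 2$, or for $m=1$ with $p<2$ (where the Stein--Tomas contribution $\sigma(p)-d(1/p-1/p') = -1$ is strict), the inequality is strict and convergence holds. The main obstacle in executing this plan is the bookkeeping of $\sigma(p)$ across the piecewise Hermite regimes from \cite{jeongSharpEstimateFof2022} and the verification that the strict inequality survives at the boundary; once the two exponent formulas are in hand, the rest is a routine arithmetic check.
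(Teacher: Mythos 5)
Your proposal is correct and follows essentially the same route as the paper's proof: both insert the Koch--Ricci polynomial bound $\|\Lambda_k\|_{L^p\to L^{p'}}\lesssim (2k+d)^{\varrho(p')}$, use $r\leq 2(m+1)/(m+3)$ to extract $m(\tfrac1r-\tfrac1{r'})\geq \tfrac{2m}{m+1}\geq 1$, and split on whether $p$ lies in the Stein--Tomas range (where the Hermite exponent contributes exactly $-1$) or in the complementary regime near $p=2$ (where it is nonpositive and vanishes only at $p=2$), with $(m,p)=(1,2)$ excluded as the single collapse to the harmonic series. The only cosmetic differences are that you invoke Jeong--Lee--Ryu alongside Koch--Ricci (the paper's Corollary~\ref{c:twisted-estimate-comb} relies only on the latter plus duality and idempotence), and your phrase ``Stein--Tomas contribution $\sigma(p)-d(1/p-1/p')=-1$ is strict'' for $m=1$, $p<2$ conflates the two sub-cases — in the Stein--Tomas range it is exactly $-1$ and the extra decay comes from $m(\tfrac1r-\tfrac1{r'})=1$, while in the near-$p=2$ regime it is strictly negative; but the total exponent is $<-1$ either way, so the argument stands.
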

	\begin{proof}
		Applying Corollary~\ref{c:twisted-estimate-comb}, the series \eqref{eq:series_Htype1ante} is bounded by a constant multiple of
		\begin{equation}\label{eq:series_Htype2}
			\sum_{k=0}^\infty\frac{(2k+d)^{\varrho(p')}}{(2k+d)^{m(\frac{1}{r}-\frac{1}{r'})+d(\frac{1}{p}-\frac{1}{p'})}} .
		\end{equation}
		where $\varrho(p')$ is defined in \eqref{eq:rho}. To prove convergence in \eqref{eq:series_Htype2}, we  preliminary observe  that given the assumption on $r$ we have 
		\begin{equation}\label{eq:m_bound}
			m\left(\frac{1}{r}-\frac{1}{r'}\right)\geq \frac{2m}{m+1}\geq 1.
		\end{equation}
		with equality if and only if $m=1$. Indeed $r\leq 2(m+1)/(m+3)$ implies 
		$$\frac{1}{r}-\frac{1}{r'}=\frac{2}{r}-1\geq \frac{m+3}{m+1}-1=\frac{2}{m+1}.$$
		We now split the proof into two cases. Denote $p_*(d):=2(2d+1)/(2d+3)$.
		
		\medskip		
		\paragraph{\emph{Case 1:} Suppose $1\leq p\leq p_*(d)$. } 
		Then $2(2d+1)/(2d-1)\leq p'\leq \infty$. In this case, using \eqref{eq:rho}, we have
		\begin{equation}
			\varrho(p')=2d\left(\frac{1}{2}-\frac{1}{p'}\right)-1=d\left(\frac{1}{p}-\frac{1}{p'}\right)-1.
		\end{equation} The sum \eqref{eq:series_Htype2} becomes
		\begin{equation}
			\sum_{k=0}^\infty\frac{1}{(2k+d)^{m(\frac{1}{r}-\frac{1}{r'})+1}} 
		\end{equation}
		which converges by \eqref{eq:m_bound}. \\[-0.3cm]
		\paragraph{\emph{Case 2:} Suppose $p_*(d)\leq p\leq 2$. Then $2< p'\leq 2(2d+1)/(2d-1)$. }
		In this case, again using \eqref{eq:rho}, we have
		\begin{equation}
			\varrho(p')=\frac{1}{p'}-\frac{1}{2}=\frac{1}{2}\left(\frac{1}{p}-\frac{1}{p'}\right).
		\end{equation} The sum  \eqref{eq:series_Htype2} becomes
		\begin{equation}
			\sum_{k=0}^\infty\frac{1}{(2k+d)^{m(\frac{1}{r}-\frac{1}{r'})+(d-\frac{1}{2})(\frac{1}{p}-\frac{1}{p'})}} .
		\end{equation}
		If $m=1$, and $p<2$, then $m(\frac{1}{r}-\frac{1}{r'})=1$ and  $(d-\frac{1}{2})(\frac{1}{p}-\frac{1}{p'})>0$. 
		If $m>1$  then $m(\frac{1}{r}-\frac{1}{r'})>1$ by \eqref{eq:m_bound}. Therefore, the series converges as long as   $(m, p)\neq (1, 2)$. 
	\end{proof}
	
	\begin{remark} It is interesting to observe that this argument excludes the endpoint estimate $p=2$ only when $m=1$. Notice that in the Heisenberg case, for which $m=1$, in \cite{BBG21} the endpoint estimate $p=2$ has been proved for radial data by an ad hoc argument (inspired from \cite{mullerRestrictionTheoremHeisenberg1990}).
	\end{remark}

	\section{The Fourier restriction and the $TT^*$ equivalence} \label{s:fr}
	
	As explained in the introduction, Strichartz estimates for the wave and Schr\"odinger equation can be interpreted as boundedness of the Fourier restriction and extension operators.  
	By the $TT^*$ argument, boundedness of the Fourier restriction and extension operators is equivalent to boundedness of convolution by $\kappa_\Sigma$ on the extended space $\D$.
	
	When $\R^n$ is replaced by an $H$-type group $G$, the dual paraboloid for the Schr\"{o}dinger equation is replaced by the Heisenberg fan $\Sigma$ defined in Section \ref{sec:dual_parab}. The (trace-class operator-valued) measure $d\Sigma_\psi$ defined in Section~\ref{sec:dual_parab} allows for the definition of the Hilbert space $L^2(d\Sigma_\psi)$ of square integrable families of operators on $\Sigma$. See formula~\eqref{def:lebesgue_parab}. 
	
	Finally the restriction $\Theta|_\Sigma$ of the operator family $\Theta\in \cS(\bDhat)$ to $\Sigma$ means the restriction of $(\mu, \lambda)\mapsto \Theta(\mu, \lambda)$ to $(\mu, \lambda)\in  \Sigma$ followed by the restriction of the operator $\Theta(\mu, \lambda): L^2(\R^d)\to L^2(\R^d)$ to the finite dimensional eigenspace $\Pi(\mu,\lambda) \subset L^2(\R^d)$ defined in \eqref{eq:emulambda}.

	\begin{theorem}\label{tm:TTstar} Let $p, q, r\in [1, \infty]$ with dual exponents $p', q', r'\in [1, \infty]$. The following are equivalent.
		\begin{enumerate}
			\item 	For all Schwartz functions $f\in \cS(\D)$,
			$$\|\cF_{\mathbb{G}}(f)|_{\Sigma}\|_{L^2(d\Sigma_{\psi})}\leq C_{q, p} \|f\|_{L^r_\fv L^q_tL_\fh^p(G)}.$$
			
			\item For any $\Theta\in L^2(d\Sigma_\psi)$, 
			$$\| \cF_{\mathbb{G}}^{-1}(\Theta|_{\Sigma})\|_{L^{r'}_\fv L_t^{q'}L_\fh^{p'}(G)}\leq C_{p, q} \|\Theta|_{\Sigma}\|_{L^2(d\Sigma_{\psi})}.$$
			\item For all $f\in \mathcal{S}(\D)$,
			\begin{align*}
				\|f*_{\mathbb{G}}\kappa_{\Sigma_{\psi}}\|_{L^{r'}_\fv L^{q'}_t L^{p'}_\fh(G)}\leq C \|f\|_{L^r_\fv L_t^q L_\fh^p(G)}.
			\end{align*}
		\end{enumerate}
	\end{theorem}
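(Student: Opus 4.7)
The proof is the standard $TT^*$ argument adapted to the operator-valued measure $d\Sigma_\psi$ defined in Section~\ref{sec:dual_parab}. Set $X = L^r_\fv L^q_t L^p_\fh(G)$, $X^* = L^{r'}_\fv L^{q'}_t L^{p'}_\fh(G)$, and $H = L^2(d\Sigma_\psi)$, and introduce the Fourier restriction operator
\[
R : \cS(\mathbb{G}) \longrightarrow H, \qquad Rf := \cF_{\mathbb{G}}(f)\big|_\Sigma,
\]
where the restriction is understood as in the paragraph preceding the statement, i.e.\ evaluation on $\Sigma$ followed by compression to the finite-dimensional eigenspaces $\Pi(\mu,\lambda)$ from \eqref{eq:emulambda}.

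The first step is to identify the formal adjoint of $R$. Unfolding the trace pairing in the definition of $\langle\cdot,\cdot\rangle_{L^2(d\Sigma_\psi)}$ and using the Fourier inversion formulas on $\mathbb{G}$, one checks that
\[
\langle Rf, \Theta\rangle_{L^2(d\Sigma_\psi)} = \int_{\mathbb{G}} f(t,x,z)\,\overline{\cF_{\mathbb{G}}^{-1}(\Theta\, d\Sigma_\psi)(t,x,z)}\,dt\,dx\,dz,
\]
which says $R^*\Theta = \cF_{\mathbb{G}}^{-1}(\Theta|_\Sigma)$, exactly the extension operator appearing in (2). From this identification, (1) $\Leftrightarrow$ (2) is immediate: since the dual of $L^r_\fv L^q_t L^p_\fh$ is $L^{r'}_\fv L^{q'}_t L^{p'}_\fh$ (the dual pair is defined via the standard bracket on Schwartz functions and extended by density), one has $\|R\|_{X\to H} = \|R^*\|_{H\to X^*}$.

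The second step is to compute the composition $R^*R$. By definition
\[
R^*R f = \cF_{\mathbb{G}}^{-1}\bigl(\cF_{\mathbb{G}}(f)\, d\Sigma_\psi\bigr),
\]
and multiplication by an operator-valued distribution on $\widehat{\mathbb{G}}$ corresponds to right convolution with its inverse Fourier transform on $\mathbb{G}$. Since $\cF_{\mathbb{G}}^{-1}(d\Sigma_\psi) = \kappa_{\Sigma_\psi}$ by \eqref{eq:gsigmastev}, and $\kappa_{\Sigma_\psi}$ is a bounded function by Proposition~\ref{p:newlab}, we obtain
\[
R^*R f = f *_{\mathbb{G}} \kappa_{\Sigma_\psi} \qquad \text{for every } f\in\cS(\mathbb{G}).
\]
The standard $TT^*$ lemma then gives (1) $\Leftrightarrow$ (3): from (1) and (2) the composition $R^*R : X \to X^*$ is bounded, which is (3); conversely, assuming (3),
\[
\|Rf\|_H^2 = \langle R^*Rf, f\rangle_{X^*,X} \leq \|R^*Rf\|_{X^*}\|f\|_X \leq C\|f\|_X^2,
\]
yielding (1) after taking square roots.

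The main technical obstacle is the rigorous verification of the adjoint identity in the first step, because $d\Sigma_\psi$ is trace-class operator-valued and the restriction $\Theta|_\Sigma$ must be interpreted as compression to $\Pi(\mu,\lambda)$. Concretely, one has to unpack the trace representation in \eqref{eq:dSigmak2}, apply Fubini on the resulting scalar integrals in $\lambda$ and the eigenbasis indices $\alpha$, and match the result with the Fourier inversion formula for $\cS(\widehat{\mathbb{G}})$ on $\mathbb{G}$. A secondary but routine point is the density argument extending $R^*R f = f *_{\mathbb{G}} \kappa_{\Sigma_\psi}$ from $\cS(\mathbb{G})$ to the mixed-norm spaces appearing in (3), which is legitimate because $\psi$ has compact support in $(0,\infty)$ and $\kappa_{\Sigma_\psi}$ is bounded. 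Both of these are carried out in detail in Section~\ref{s:frs}.
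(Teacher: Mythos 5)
Your proposal is correct and follows essentially the same route as the paper. You set up the restriction operator $R=\cR_\Sigma$, identify its formal adjoint with the extension operator $\cE_\Sigma = \cF_{\mathbb{G}}^{-1}(\,\cdot\, d\Sigma_\psi)$, compute $R^*R f = f *_{\mathbb{G}} \kappa_{\Sigma_\psi}$ using $\cF_{\mathbb{G}}^{-1}(d\Sigma_\psi)=\kappa_{\Sigma_\psi}$, and invoke the standard $TT^*$ lemma; these are precisely the ingredients the paper establishes in Lemma~\ref{lem:extention_parab} and the proposition giving \eqref{eq:TT*-ker}, combined with Proposition~\ref{prop:GSigma} for the boundedness of $\kappa_{\Sigma_\psi}$.
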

	Notice that Proposition \ref{prop:TTstarintro} establishes a set of admissible $(p, q, r)$ for which item (3) in the previous theorem is satisfied. The proof of this equivalence is postponed to Section \ref{s:frs} and we first establish applications.

	\section{Strichartz estimates for Schr\"{o}dinger equation: Proof of Theorem \ref{t:main2}}\label{s:schro}
	Consider the Schr\"{o}dinger Equation on an $H$-type group $G$:
	\begin{equation}\label{eq:sch-heis}
		\begin{cases}
			i\partial_tu-\Delta_{H} u=0 \\
			u|_{t=0}=u_0
		\end{cases}
	\end{equation}
	where we recall that given an orthonormal basis $\{X_{j}\}_{j=1}^{2d}$ the sub-Laplacian is defined by \begin{equation}
		\Delta_{H}=\sum_{j=1}^{2d} X_j^2
	\end{equation}
	Taking the Fourier transform, and using the fact that for any $f\in \cS'(G)$
	\begin{align*}
		\cF_G(\Delta_{H} f)(\lambda, \alpha, \beta)=-|\lambda| (2|\alpha|+d)\cF_G(f)(\lambda, \alpha, \beta), 
	\end{align*}
	for all, and $\lambda\in \fv^*\mz$, $\alpha, \beta\in \N^n$, 
	we obtain
	\[
	\begin{cases}
		i\partial_t\cF_G(u_t)(\lambda, \alpha, \beta)=-|\lambda|(2|\alpha|+d)\cF_G(u_t)(\lambda, \alpha, \beta) \\
		\cF_G(u_t)|_{t=0}=\cF_G(u_0).
	\end{cases}
	\]
	By integration we obtain for all $t\in \R$
	\begin{align*}
		\cF_G(u_t)(\lambda, \alpha, \beta)=e^{i|\lambda|(2|\alpha|+d)t}\cF_G(u_0)(\lambda, \alpha, \beta).
	\end{align*}
	Applying the inverse Fourier transform, we obtain an explicit expression for the solution to the Schr\"{o}dinger equation. 
	 
	\begin{align} \label{eq:postotto2}
		u_t(x, z)
		&=
		(2\pi)^{-d-m}\int_{\fv^*} \sum_{\alpha, \beta\in \N^d}e^{i|\lambda|(2|\alpha|+d)t}\cF_G(u_0)(\lambda, \alpha, \beta)\overline{E_{\alpha\beta}^{\lambda}(x, z)}| \lambda|^d d\lambda 
	\end{align}
	We can rewrite this as an integral over  $\Sigma\subset \bDhat$ as follows
	\begin{align} \label{eq:postotto3}
		u_t(x, z)
		&=
		\sum_{k\in \N}\int_{\bDhat} e^{i\mu t}\sum_{\substack{\alpha, \beta\in \N^d \\|\alpha|=k}}\cF_G(u_0)(\lambda, \alpha, \beta)\overline{E_{\alpha\beta}^{\lambda}(x, z)}d\sigma_k(\mu, \lambda)
	\end{align}
	where  $d\sigma_k$ is the scalar measure supported on $\Sigma_k$ defined in Section~\ref{sec:dual_parab}.

	We need the following lemma.
	\begin{lemma} \label{l:postotto}
		For $u_0\in \cS(G)$, we have 
		\begin{align*}
			\trace{\pi_\lambda(x, z)\cF_G u_0(\lambda)P_k(\lambda)}=\sum_{\substack{\alpha, \beta\in \N^d\\|\alpha|=k}}\cF_G(u_0)(\lambda, \alpha, \beta)\overline{E_{\alpha\beta}^{\lambda}(x, z)}.
		\end{align*}
	\end{lemma}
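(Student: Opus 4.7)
The plan is to prove the lemma by a direct calculation in the orthonormal Hermite basis $\{\Phi_\alpha^\lambda\}_{\alpha\in\N^d}$ of $L^2(\R^d)$. Since all three operators on the left-hand side are well-behaved (the first is unitary; the middle is trace-class because $u_0\in\cS(G)$, so $\cF_G u_0(\lambda)$ decays rapidly in the Hermite matrix-coefficient indices; and the last is a finite-rank projector), the trace is absolutely convergent and may be computed as
\[
\trace{\pi_\lambda(x,z)\cF_G u_0(\lambda)P_k(\lambda)}
=\sum_{\gamma\in\N^d}\bigl\langle \pi_\lambda(x,z)\cF_G u_0(\lambda)P_k(\lambda)\Phi_\gamma^\lambda,\Phi_\gamma^\lambda\bigr\rangle.
\]

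Next I would use the key property of $P_k(\lambda)$: it is the orthogonal projector onto the finite-dimensional eigenspace $\Pi(|\lambda|(2k+d),\lambda)=\mathrm{span}\{\Phi_\alpha^\lambda:|\alpha|=k\}$ of the Hermite operator $H(\lambda)$. Hence $P_k(\lambda)\Phi_\gamma^\lambda$ equals $\Phi_\gamma^\lambda$ when $|\gamma|=k$ and vanishes otherwise, so the outer sum collapses to $\sum_{|\alpha|=k}$. Then I would expand $\cF_G u_0(\lambda)\Phi_\alpha^\lambda$ in the Hermite basis using the defining identity $\cF_G(u_0)(\lambda,\alpha,\beta)=\bigl\langle\cF_G u_0(\lambda)\Phi_\alpha^\lambda,\Phi_\beta^\lambda\bigr\rangle$, which gives $\cF_G u_0(\lambda)\Phi_\alpha^\lambda=\sum_\beta \cF_G(u_0)(\lambda,\alpha,\beta)\,\Phi_\beta^\lambda$. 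Substituting this into the previous display and pulling the scalars out of the inner product, the trace becomes
\[
\sum_{\substack{\alpha,\beta\in\N^d\\|\alpha|=k}}\cF_G(u_0)(\lambda,\alpha,\beta)\bigl\langle \pi_\lambda(x,z)\Phi_\beta^\lambda,\Phi_\alpha^\lambda\bigr\rangle.
\]

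The final step is to recognize the remaining matrix element as the $E^\lambda$-coefficient appearing in the statement. Here the bookkeeping is the only delicate point: unitarity of $\pi_\lambda(x,z)$ gives the identity $\bigl\langle\pi_\lambda(x,z)^*\Phi_\beta^\lambda,\Phi_\alpha^\lambda\bigr\rangle=\overline{\bigl\langle \pi_\lambda(x,z)\Phi_\alpha^\lambda,\Phi_\beta^\lambda\bigr\rangle}=\overline{E_{\alpha\beta}^\lambda(x,z)}$, which matches the right-hand side of the claim once the conventions on $\pi_\lambda$ versus $\pi_\lambda^{*}$ used in the definition of $\cF_G^{-1}$ are applied consistently with the trace formula $\cF_G^{-1}(\theta)(x,z)=(2\pi)^{-d-m}\int\trace{\pi_\lambda(x,z)^*\theta(\lambda)}|\lambda|^d\,d\lambda$ stated earlier in the paper. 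The hardest part is not analytic but notational: making sure the conjugation/adjoint on $\pi_\lambda$ and the ordering of indices $(\alpha,\beta)$ in $E_{\alpha\beta}^\lambda$ line up. Once this is done the identity follows immediately from the two-step matrix expansion above, and summing against $e^{i\mu t}\,d\sigma_k(\mu,\lambda)$ over $k$ recovers the expression \eqref{eq:postotto3} for $u_t$ from \eqref{eq:postotto2}, which is the intended application.
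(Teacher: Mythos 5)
Your approach — compute the trace in the Hermite basis $\{\Phi_\gamma^\lambda\}$, use that $P_k(\lambda)$ fixes $\Phi_\gamma^\lambda$ exactly when $|\gamma|=k$ to collapse the outer sum, then expand $\cF_G u_0(\lambda)\Phi_\alpha^\lambda$ in the basis — is exactly the paper's proof. But the notational discrepancy you flagged and then waved away is genuine and you should resolve it head-on rather than attribute it to conventions: with $\pi_\lambda(x,z)$ (no adjoint) inside the trace, your chain of equalities terminates at $\sum_{|\alpha|=k,\beta}\cF_G(u_0)(\lambda,\alpha,\beta)\,\langle\pi_\lambda(x,z)\Phi_\beta^\lambda,\Phi_\alpha^\lambda\rangle = \sum \cF_G(u_0)(\lambda,\alpha,\beta)\,E_{\beta\alpha}^\lambda(x,z)$, which is \emph{not} $\sum\cF_G(u_0)(\lambda,\alpha,\beta)\,\overline{E_{\alpha\beta}^\lambda(x,z)}$. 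The lemma statement in the paper contains a typo: the trace should be $\trace{\pi_\lambda(x,z)^{*}\cF_Gu_0(\lambda)P_k(\lambda)}$. The paper's own proof silently inserts the $^*$ in its very first display, and the application in the proof of the subsequent lemma (Equation \eqref{eq:sch-prop0}, coming from the inversion formula $\cF_G^{-1}(\theta)(x,z)=(2\pi)^{-d-m}\int\trace{\pi_\lambda(x,z)^*\theta(\lambda)}|\lambda|^d\,d\lambda$) also uses $\pi_\lambda(x,z)^*$. Once you start from $\trace{\pi_\lambda(x,z)^*\cF_Gu_0(\lambda)P_k(\lambda)}$, the final matrix element becomes $\langle\Phi_\beta^\lambda,\pi_\lambda(x,z)\Phi_\alpha^\lambda\rangle=\overline{E_{\alpha\beta}^\lambda(x,z)}$ with no ambiguity, and your argument closes cleanly.
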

	Lemma~\ref{l:postotto}, whose proof is in Section~\ref{s:frp}, permits to write \eqref{eq:postotto2} as the inverse Fourier of some function defined on the Heisenberg fan.

	\begin{lemma}
		Given $u_0\in \cS(G)$, the product $\cF_G(u_0)d\Sigma$ is a trace-class operator-valued measure on $\Sigma$. We have
		\begin{align}\label{eq:iftsigma}
			u_{t}(x, z)=\cF_\mathbb{G}^{-1}\{\cF_G(u_0)d\Sigma\}(t, x, z).
		\end{align}
	\end{lemma}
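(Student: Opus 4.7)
The plan is to combine the explicit expression \eqref{eq:postotto3} for the solution $u_t$ with the trace-identity of Lemma~\ref{l:postotto} and the definition \eqref{eq:dSigmak}--\eqref{eq:dSigmak2} of $d\Sigma$ to recognize $u_t$ as the inverse Fourier transform of $\cF_G(u_0)\,d\Sigma$ on $\Sigma$.

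First I would check that $\cF_G(u_0)\,d\Sigma$ defines a trace-class operator-valued measure. Since $d\Sigma=\sum_k P_k(\lambda)\otimes d\sigma_k$ where $P_k(\lambda)$ is the rank-$\binom{k+d-1}{d-1}$ eigenprojector, the product $\cF_G(u_0)(\lambda)P_k(\lambda)$ has finite rank. Because $u_0\in\cS(G)$, its Fourier coefficients $\cF_G(u_0)(\lambda,\alpha,\beta)$ decay faster than any polynomial in $(\lambda,\alpha,\beta)$, so the trace norm $\|\cF_G(u_0)(\lambda)P_k(\lambda)\|_{\cL_1}$ against the Plancherel weight $|\lambda|^d$ yields an absolutely convergent total variation, both in $\lambda$ and summed over $k$. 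Hence $\cF_G(u_0)\,d\Sigma\in\cS(\bDhat,\cL_1)$ as an operator-valued measure on $\Sigma$.

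Next, starting from the explicit solution \eqref{eq:postotto2}, I would group the sum in $(\alpha,\beta)$ by the level $|\alpha|=k$, which by Lemma~\ref{l:postotto} rewrites the inner double sum as a single trace:
\begin{equation*}
\sum_{\substack{\alpha,\beta\in\N^d\\ |\alpha|=k}}\cF_G(u_0)(\lambda,\alpha,\beta)\overline{E^{\lambda}_{\alpha\beta}(x,z)}
=\trace{\pi_\lambda(x,z)\cF_G(u_0)(\lambda)P_k(\lambda)}.
\end{equation*}
The factor $e^{i|\lambda|(2|\alpha|+d)t}$ becomes $e^{i\mu t}$ once we integrate against $d\sigma_k$ which localizes $\mu=|\lambda|(2k+d)$. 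Thus \eqref{eq:postotto3} becomes
\begin{equation*}
u_t(x,z)=\sum_{k\in\N}\int_{\bDhat}e^{i\mu t}\trace{\pi_\lambda(x,z)\cF_G(u_0)(\lambda)P_k(\lambda)}\,d\sigma_k(\mu,\lambda).
\end{equation*}

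Finally, recognizing $P_k(\lambda)\otimes d\sigma_k=d\Sigma_k$ and $\sum_k d\Sigma_k=d\Sigma$, and using the convention for $\pi_\lambda(x,z)^*$ versus $\pi_\lambda(x,z)$ fixed earlier (cf.\ the Remark following the definition of $\cF_G$), the above expression is precisely the inverse group Fourier transform on $\D$ of the operator-valued measure $\cF_G(u_0)\,d\Sigma$. That is,
\begin{equation*}
u_t(x,z)=\int_{\bDhat}e^{i\mu t}\trace{\pi_\lambda(x,z)^*\,\bigl(\cF_G(u_0)\,d\Sigma\bigr)(\mu,\lambda)}=\cF^{-1}_{\D}\{\cF_G(u_0)\,d\Sigma\}(t,x,z).
\end{equation*}
The exchange of sum and integral is justified by the trace-class summability established in the first step, so no further analytic subtlety appears. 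The only mildly delicate point is keeping track of the complex-conjugate versus adjoint convention between $E^\lambda_{\alpha\beta}$ and $\pi_\lambda^*$, but this is bookkeeping rather than a real obstacle.
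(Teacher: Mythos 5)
Your proof is correct and takes essentially the same approach as the paper: both rely on Lemma~\ref{l:postotto} to convert the $(\alpha,\beta)$-sum in the explicit solution formula \eqref{eq:postotto2}--\eqref{eq:postotto3} into a trace against $P_k(\lambda)$, then recognize the result as the definition of the inverse Fourier transform of $\cF_G(u_0)\,d\Sigma$; the only difference is that you read the chain of equalities from $u_t$ toward the Fourier side, while the paper unravels the definition of $\cF_\mathbb{G}^{-1}\{\cF_G(u_0)d\Sigma\}$ and then invokes Lemma~\ref{l:postotto} and \eqref{eq:postotto3}. You also add a short justification of the trace-class/absolute-convergence claim, which the paper asserts but does not spell out in the proof; that is a welcome bit of extra care, though it could be tightened by quantifying the Schwartz decay of $\cF_G(u_0)(\lambda,\alpha,\beta)$ against the weight $|\lambda|^d$ and the rank $\binom{k+d-1}{d-1}$ of $P_k(\lambda)$.
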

	The meaning of the right hand side in \eqref{eq:iftsigma} is explained in Section~\ref{sec:dual_parab}.
	\begin{proof}
		Unraveling the definition of the equation above, we have
		\begin{align}\label{eq:sch-prop0}
			\cF_\mathbb{G}^{-1}\{\cF_G(u_0)d\Sigma\}(t, x, z)
			:=&
			\int_{\bDhat} e^{i\mu t}\trace{\pi_\lambda(x, z)^*\cF_Gu_0(\lambda)d\Sigma(\mu, \lambda)}.\\
			=&
			\sum_{k\in \N}\int_{\bDhat} e^{i\mu t}\trace{\pi_\lambda(x, z)^*\cF_Gu_0(\lambda)P_k(\lambda)}d\sigma_k(\mu, \lambda).
		\end{align}
		which in view of Lemma \ref{l:postotto} and Equation \eqref{eq:postotto3} is equal to $u_t(x, z)$. 
	\end{proof}

	Now suppose that $u_0$ us frequency localized in the unit ball $\cB_1\subset\fv^*$ as in Definition~\ref{d:annulus}. By  Theorem \ref{tm:TTstar} part $(2)$ and Proposition \ref{prop:TTstarintro2} followed by the Plancherel Theorem for  $\cF_G$, it holds, for $p', q'\in [1, 2]$ and $2(m+3)/(m-1)\leq r'\leq \infty$ satisfying $p'\leq q'$ and $p'\leq r'$, that
	\begin{equation}\label{eq:freq-loc}
		\|u\|_{L^{r'}_{\fv} L^{q'}_tL^{p'}_\fh}\leq C\|\cF_G(u_0)\|_{L^2(d\Sigma)}=C\|u_0\|_{L^2(G)}.
	\end{equation}
	Now suppose $u_0$ is frequency localized in the ball $\cB_\Lambda=\delta_\Lambda(\cB_1)$. Then the function
	\begin{equation}
		u_{0, \Lambda}=u_0\circ\delta_{\Lambda}^{-1},\qquad \delta_{\Lambda}(x,z)=(\Lambda x,\Lambda^{2}z).
	\end{equation}
	is frequency localized in $\cB_1$. If $u_\Lambda$ is the solution to the Schr\"{o}dinger equation \eqref{eq:sch-heis} with initial data $u_{0, \Lambda}$ in place of $u_0$, then 
	\begin{equation}
		u_\Lambda(t, x, z)=u(\Lambda^{-2}t, \Lambda^{-1}x, \Lambda^{-2}z). 
	\end{equation}
	Observe the rescaling properties
	\begin{equation}
		\|u_\Lambda\|_{L^{r'}_\fv L^{q'}_t L^{p'}_\fh}=\Lambda^{\frac{2m}{r'}+\frac{2}{q'}+\frac{2d}{p'}}\|u\|_{L^{r'}_\fv L^{q'}_t L^{p'}_\fh}
	\end{equation}
	and 
	\begin{equation}
		\|u_{0, \Lambda}\|_{L^2(G)}=\Lambda^{d+m}\|u_0\|_{L^2(G)}.
	\end{equation}
	Thus
	\begin{equation}
		\|u\|_{L^{r'}_\fv L^{q'}_t L^{p'}_\fh}\leq C\Lambda^{d+m-\frac{2m}{r'}-\frac{2}{q'}-\frac{2d}{p'}}\|u_0\|_{L^2(G)}
	\end{equation}
	By Lemma~\ref{l:annulus}, since $u_0$ is frequency supported in $\cB_1$ we have for and $s\geq 0$, 
	\begin{align*}
		\Lambda^s\|u_0\|_{L^2(G)}\lesssim \|u_0\|_{H^s(G)}. 
	\end{align*}
	Putting it all together we obtain
	\begin{equation}
		\|u\|_{L^{r'}_\fv L^{q'}_t L^{p'}_\fh}\lesssim \|u_0\|_{H^{d+m-\frac{2m}{r'}-\frac{2}{q'}-\frac{2d}{p'}}(G)}
	\end{equation}
	for all $2\leq p'\leq q'\leq \infty$ and $2(m+3)/(m-1)\leq r'\leq\infty$ with $p'\leq r'$  such that 
	\begin{equation}
		\frac{2m}{r'}+\frac{2}{q'}+\frac{2d}{p'}\leq \frac{Q}{2},
	\end{equation}
	recalling that $Q=2d+2m$ is the homogeneous dimension.
	\section{Strichartz estimates for wave equation: Proof of Theorem \ref{t:main3}}\label{s:wave}
	Consider the wave equation on an $H$-type group $G$
	\begin{equation}\label{eq:sch-heis}
		\begin{cases}
			\partial^{2}_tu-\Delta_{H} u=0 \\
			(u, \partial_{t}u)|_{t=0}=(u_0,v_{0})\ 
		\end{cases}
	\end{equation}
	Taking the Fourier transform, and using the fact that for any $f\in S'(G)$
	\begin{align*}
		\cF_G(\Delta_{H} f)(\lambda, \alpha, \beta)=-|\lambda| (2|\alpha|+d)\cF_G(f)(\lambda, \alpha, \beta), 
	\end{align*}
	for all, and $\lambda\in \fv^*\mz$, $\alpha, \beta\in \N^n$, 
	we obtain
	\[
	\begin{cases}
		\partial^{2}_t\cF_G(u)(\lambda, \alpha, \beta)=-|\lambda|(2|\alpha|+d)\cF_G(u)(\lambda, \alpha, \beta) \\
		(\cF_G(u),\cF_G(\partial_{t}u))|_{t=0}=(\cF_G(u_0),\cF_G(v_0)).
	\end{cases}
	\]
	By integration we obtain for all $t\in \R$
	\begin{align*}
		\cF_G(u_t)(\lambda, \alpha, \beta)=\sum_{\pm}e^{\pm it \sqrt{|\lambda|(2|\alpha|+d)}}\cF_G(\gamma_{\pm})(\lambda, \alpha, \beta)
	\end{align*}
	where we have defined
	\begin{align}\label{eq:gammapmwave}
		\cF_G(\gamma_{\pm})=\frac{1}{2}\left( \cF_G(u_{0}) \pm \frac{1}{i \sqrt{|\lambda|(2|\alpha|+d)}}\cF_G(v_{0})\right).
	\end{align}
	
	Suppose $u_0,v_{0}\in \cS(G)$.  
	Again by the Fourier inversion theorem 
	we have
	\begin{align*}
		u_t(x, z)
		&=
		(2\pi)^{-d-m}\int_{\fv^*} \sum_{\substack{\alpha, \beta\in \N^d \\ \pm}}e^{\pm it \sqrt{|\lambda|(2|\alpha|+d)}}\cF_G(\gamma_{\pm})(\lambda, \alpha, \beta)\overline{E_{\alpha\beta}^{\lambda}(x, z)}| \lambda|^d d\lambda.
	\end{align*}
	Define now  $\Sigma^{w}=\cup_{\pm} \Sigma^{\pm}$ by
	\begin{align*}
		\Sigma^{\pm}&:=\{(\mu, \lambda)\in \widehat{\R}\times (\fv^*\mz)\mid  \mu^{2}=|\lambda| (2k+d), \,\pm \mu>0 \ \text{for some }k\in \N\},
	\end{align*}
	and the measures $d\Sigma^{\pm}$ are defined in Section~\ref{s:wavedisint}.
	We can rewrite this as an integral over  $\Sigma^w\subset \bDhat$ as follows
	\begin{align} \label{eq:postotto3}
		u_t(x, z)
		&=
		\sum_{\substack{k\in \N\\ \pm}}\int_{\bDhat} e^{\mu t}\sum_{\substack{\alpha, \beta\in \N^d \\|\alpha|=k}}\cF_G(\gamma_{\pm})(\lambda, \alpha, \beta)\overline{E_{\alpha\beta}^{\lambda}(x, z)}d\sigma_k^\pm(\mu, \lambda)
	\end{align}
	where  $d\sigma_k^\pm:=(2\pi)^{-d-m}\delta_0(\mu\pm\sqrt{|\lambda|(2k+d)})d\lambda d\mu$ is the natural scalar measure supported on $\Sigma_k^\pm\subset \widehat{\mathbb{G}}$ defined in Section~\ref{s:wavedisint}.
	
	Similarly to the previous case,  $\cF_G(\gamma_{\pm})d\Sigma^{\pm}$ is a trace-class operator valued measure. 
	Using again Lemma~\ref{l:postotto}
	 
we obtain
\begin{align}\label{eq:sch-propw}
	u_t(x, z)=\sum_{\pm}\int_{\bDhat} e^{i\mu t}\trace{\pi_\lambda(x, z)\cF_G(\gamma_\pm)(\lambda)d\Sigma^{\pm}(\mu, \lambda)}.
\end{align}
Consequently we can interpret Equation \eqref{eq:sch-propw} for the solution to the wave equation as the inverse Fourier transform in $\mathbb{G}$ of  the trace-class operator-valued measure $\cF_G(\gamma_{\pm})d\Sigma^{\pm}$.
\begin{align*}
	u_{t}(x, z)=\sum_{\pm}\cF_\mathbb{G}^{-1}\{\cF_G(\gamma_{\pm})d\Sigma^{\pm}\}(t, x, z).
\end{align*}

\begin{remark} Theorem~\ref{tm:TTstar} is stated for the specific set $\Sigma$ involved in the proof for the Schrodinger equation. One can easily check that the same equivalence holds true for  $\Sigma^{\pm}$. The corrisponding kernel $\kappa_{\Sigma^{\pm}}$ satisfies an analogous integral decomposition in terms of the spectral projections of the sub-Laplacians, whose details are in Section~\ref{s:wavedisint}.
\end{remark}

Now suppose that $\gamma_{\pm}$ are frequency localized in the unit ball $\cB_1\subset\fv^*$.
Using the observations of Section~\ref{s:wavedisint}, for $p', q'\in [1, 2]$ and $2(m+3)/(m-1)\leq r'\leq \infty$ satisfying $p'\leq q'$ and $p'\leq r'$
\begin{equation}\label{eq:freq-loc}
	\|u\|_{L^{r'}_{\fv} L^{q'}_tL^{p'}_\fh}\leq C\sum_{\pm}\|\cF_G(\gamma_{\pm})\|_{L^2(d\Sigma^{\pm})}=C\sum_{\pm}\|\gamma_{\pm}\|_{L^2(G)}.
\end{equation}
Now suppose $u_0$ is frequency localized in the ball $\cB_\Lambda=\delta_\Lambda(\cB_1)$. Then the function
\begin{equation}
	u_{0, \Lambda}=u_0\circ\delta_{\Lambda}^{-1}
\end{equation}
is frequency localized in $\cB_1$. If $u_\Lambda$ is the solution to the Schr\"{o}dinger equation \ref{eq:sch-heis} with initial data $u_{0, \Lambda}$ in place of $u_0$, then the correct rescaling (notice the difference with respect to the Schr\"{o}dinger case)
\begin{equation}
	u_\Lambda(t, x, z)=u(\Lambda^{-1}t, \Lambda^{-1}x, \Lambda^{-2}z). 
\end{equation}
Observe that 
\begin{equation}
	\|u_\Lambda\|_{L^{r'}_\fv L^{q'}_t L^{p'}_\fh}=\Lambda^{\frac{2m}{r'}+\frac{1}{q'}+\frac{2d}{p'}}\|u\|_{L^{r'}_\fv L^{q'}_t L^{p'}_\fh}
\end{equation}
and 
\begin{equation}
	\|\gamma_{\pm, \Lambda}\|_{L^2(G)}=\Lambda^{d+m}\|\gamma_{\pm}\|_{L^2(G)}.
\end{equation}
Thus recalling $Q=2d+2m$,
\begin{equation}
	\|u\|_{L^{r'}_\fv L^{q'}_t L^{p'}_\fh}\leq C\Lambda^{\frac{Q}{2}-\frac{2m}{r'}-\frac{1}{q'}-\frac{2d}{p'}}\sum_{\pm}\|\gamma_{\pm}\|_{L^2(G)}.
\end{equation}
Using \eqref{eq:gammapmwave} together with Lemma \ref{l:annulus} we have 
$$\Lambda^{1} \|\gamma_{\pm}\|\simeq \|\nabla_{H} u_{0}\|_{L^2(G)}+\|v_{0}\|_{L^2(G)}.$$
Thus
\begin{equation}
	\|u\|_{L^{r'}_\fv L^{q'}_t L^{p'}_\fh}\leq C\Lambda^{\frac{Q}{2}-1-\frac{2m}{r'}-\frac{1}{q'}-\frac{2d}{p'}}(\|\nabla_{H} u_{0}\|_{L^2(G)}+\|v_{0}\|_{L^2(G)})
\end{equation}
For $f$ which is frequency supported in an unit annulus we have for and $s\geq 0$, 
\begin{align*}
	\Lambda^s\|f\|_{L^2(G)}\lesssim \|f\|_{H^s(G)}. 
\end{align*}
Putting it all together we obtain
\begin{equation}
	\|u\|_{L^{r'}_\fv L^{q'}_t L^{p'}_\fh}\lesssim  \|\nabla_{H} u_{0}\|_{H^\sigma(G)}+\|v_{0}\|_{H^\sigma(G)}
\end{equation}
for all $2\leq p'\leq q'\leq \infty$ and $2(m+3)/(m-1)\leq r'\leq\infty$ with $p'\leq r'$  and
\begin{equation}
	\sigma:= \frac{Q}{2}-1-\left(\frac{2m}{r'}+\frac{1}{q'}+\frac{2d}{p'}\right)\geq 0.
\end{equation}

\section{The Fourier restriction: Proof of Theorem \ref{tm:TTstar}}\label{s:frs}\label{s:frp}
In what follows $G$ is an $H$-type group and $\Delta_{H}$ denotes the corresponding sub-Laplacian. For simplicity in this section we denote $\Delta_{H}$ simply by $\Delta$.

\subsection{Convolution Kernel of the Spectral Projector}
We start with some considerations on the spectral projectors. We report here a result from \cite{casarinoRestrictionTheoremMetivier2013}, specialized to the case of $H$-type groups.

Recall that $\Lambda^{\lambda}_k$ denotes the the projector on the $k$-th eigenspace of the the twisted Laplacian $\Delta^{\lambda}_{H}$.

\begin{theorem}\cite[Theorem 4.8]{casarinoRestrictionTheoremMetivier2013}
	Let $G$ be an H-type group. If $f\in \cS(G)$, then
	\begin{equation}
		f(x, z)=\int_0^\infty \cP_\mu f(x, z)d\mu
	\end{equation}
	where $\cP_\mu=\cP_\mu^{-\Delta}$ is given by any of the following equivalent formulas
	\begin{equation}\label{eq:spec-proj-Htype}
		\cP_\mu f(x, z)=
		\frac{\mu^{m-1}}{(2\pi)^m}\sum_{k=0}^\infty\frac{1}{(2k+d)^{m}}\int_Se^{i\mu_k\omega(z)}\Lambda_k^{\mu_k} f^{\mu_k\omega}(x)d\sigma_S(\omega) 
	\end{equation}
	\begin{align}\label{eq:spec_proj_alt}
		\cP_\mu f(x, z)=\frac{\mu^{d+m-1} 	}{(2\pi)^{d+m}}\sum_{k\in \N}\frac{1}{(2k+d)^{d+m}}\int_{S} f*e_k^{\mu_k\omega}(x, z) d\sigma_S(\omega)
	\end{align}
	where $\mu_k:=\mu/(2k+d)$ and  $d\sigma_{S}$ is the measure induced on the $m-1$ unit sphere $S\subset \fv^*$ from the metric on $\fv$ used in the definition of the $H$-type group $G$.  Moreover
	\begin{equation}
		-\Delta (\cP_\mu f)=\mu\, \cP_\mu f.
	\end{equation}
\end{theorem}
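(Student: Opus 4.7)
The plan is to start from the operator-valued Plancherel inversion on $G$, use the pure-point spectral decomposition of the Hermite operator $H(\lambda)$ to separate eigenvalues of $-\Delta_H$, and then perform a change of variables on $\fv^*$ that turns the frequency variable $\lambda$ into the eigenvalue parameter $\mu$. The equivalence of the two displayed formulas for $\cP_\mu f$ will then follow from the standard bridge (Weyl transform / twisted convolution) between $\cF_G$ and the partial Fourier transform $f^\lambda$ in the central variable.

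Concretely, starting from
$$
f(x,z)=\frac{1}{(2\pi)^{d+m}}\int_{\fv^*}\tr\bigl[\pi_\lambda(x,z)^*\cF_G(f)(\lambda)\bigr]\,|\lambda|^d\,d\lambda,
$$
I would insert the resolution of the identity $I=\sum_{k\in\N} P_k(\lambda)$ for $H(\lambda)$ under the trace, legitimate for $f\in\cS(G)$ by fast decay of $\cF_G(f)(\lambda)$ on eigenspaces, and interchange the sum with the integral. Next, write $\lambda=r\omega$ with $r=|\lambda|>0$ and $\omega\in S\subset\fv^*$, so $d\lambda=r^{m-1}\,dr\,d\sigma_S(\omega)$, and perform the substitution $\mu=r(2k+d)$ separately in each summand. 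A short computation gives $r^{d+m-1}\,dr=\mu^{d+m-1}(2k+d)^{-(d+m)}\,d\mu$, and pulling the $\mu$-integral outside identifies $\cP_\mu f$ as the inner integrand. Since for $\lambda=\mu_k\omega$ the operator $\cF_G(f)(\lambda)P_k(\lambda)$ is the Fourier transform of $f *_G e_k^{\mu_k\omega}$ in terms of the convolution kernel associated with $P_k(\lambda)$, reading off the trace yields directly the second form~\eqref{eq:spec_proj_alt}.

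To obtain the first representation, I would use the factorization $\pi_\lambda(x,z)=e^{i\lambda(z)}\widetilde\pi_\lambda(x)$ coming from Lemma~\ref{lem:projec}, under which $\cF_G(f)(\lambda)=W_{|\lambda|}(f^\lambda)$ is the Weyl transform of the partial Fourier transform of $f$, and the Hermite eigenprojector $P_k(\lambda)$ corresponds to the twisted-Laplacian eigenprojector $\Lambda_k^{|\lambda|}$. The Weyl-Plancherel identity produces a factor $|\lambda|^d=\mu_k^d$ relating the operator trace to the scalar pairing with $\Lambda_k^{\mu_k}f^{\mu_k\omega}$; absorbing this factor cancels one copy of $(2k+d)^d$ and of $\mu^d$ and reproduces the prefactor $\mu^{m-1}/(2\pi)^m$ together with the central phase $e^{i\mu_k\omega(z)}$, giving \eqref{eq:spec-proj-Htype}. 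The eigenvalue identity $-\Delta\,\cP_\mu f=\mu\,\cP_\mu f$ is then automatic: on the range of $P_k(\lambda)$ one has $H(\lambda)=|\lambda|(2k+d)=\mu$, so $-\Delta$ acts as scalar multiplication by $\mu$ under $\cF_G$ inside each summand of $\cP_\mu f$.

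The main obstacle is the bookkeeping of normalization constants in the passage between the two forms, i.e.\ keeping track of the $(2\pi)^{d}$'s and $|\lambda|^d$'s produced by the Weyl-Plancherel identity so that the powers of $(2k+d)$ and $\mu$ match. The interchange of the series $\sum_k$ with the trace and with the $\mu$-integral is routine for Schwartz data by the rapid decay of the Hermite coefficients of $\cF_G(f)(\lambda)$, but has to be carried out carefully before the change of variables $\mu=|\lambda|(2k+d)$ is applied termwise.
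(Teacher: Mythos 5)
Your proposal is essentially sound, but it starts from the opposite end of the proof to the one the paper uses. The paper's proof (in Appendix~\ref{s:app2}, ``Proof of the Theorem refer to section 8'') begins with the scalar Euclidean Fourier inversion in the central variable~$\fv$, inserts the pure-point spectral decomposition of the twisted Laplacian $\cF_\fv f(x,\eta)=\sum_k\Lambda_k^{|\eta|}f^\eta(x)$, passes to polar coordinates $\eta=\rho\omega$ on~$\fv^*$, and performs the change of variables $\mu=\rho(2k+d)$ termwise; this produces \eqref{eq:spec-proj-Htype} directly, and \eqref{eq:spec_proj_alt} is then read off from the twisted-convolution identity $f*e_k^{\rho_k\omega}=(2\pi)^d\rho_k^{-d}e^{i\rho_k\omega(z)}\Lambda_k^{\rho_k}f^{\rho_k\omega}$. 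You instead begin from the operator-valued group Plancherel inversion, insert the Hermite-eigenprojector resolution $I=\sum_k P_k(\lambda)$ under the trace, and perform the same polar/substitution step; this naturally produces \eqref{eq:spec_proj_alt} first, with \eqref{eq:spec-proj-Htype} recovered via the Weyl-transform correspondence between $\cF_G(f)(\lambda)P_k(\lambda)$ and $\Lambda_k^{|\lambda|}f^\lambda$. The two derivations are genuine duals of each other: they share the same combinatorial heart (the change of variables $\mu=|\lambda|(2k+d)$, the Jacobian $r^{d+m-1}\,dr=\mu^{d+m-1}(2k+d)^{-(d+m)}\,d\mu$, and the twisted-convolution bridge), but the paper's scalar starting point keeps the operator-valued formalism out of the picture until the very last step, which is cleaner if one only wants \eqref{eq:spec-proj-Htype}. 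Your operator-valued starting point has the advantage of connecting directly with the projectors $P_k(\lambda)$ and eigenspaces $\Pi(\mu,\lambda)$ that are used throughout Sections~\ref{s:fheis} and~\ref{s:frs}, so it would integrate more seamlessly with the rest of the paper's machinery. The only caution is the one you already flag: the Weyl--Plancherel normalizations (the factors $(2\pi)^d$ and $|\lambda|^d$ appearing in the identity above) must be carried through carefully, since they are exactly what turns the prefactor $\mu^{d+m-1}(2\pi)^{-(d+m)}(2k+d)^{-(d+m)}$ of \eqref{eq:spec_proj_alt} into the prefactor $\mu^{m-1}(2\pi)^{-m}(2k+d)^{-m}$ of \eqref{eq:spec-proj-Htype}; with the paper's conventions this does balance out.
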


\begin{remark}
	Note that, compared with the notation in \cite[Theore 4.8]{casarinoRestrictionTheoremMetivier2013}, some powers of $\mu$ and $(2k+d)$ have been subsumed into the projector $\Lambda_k^{\mu_k}$.
\end{remark}

Let $\cP_\mu=\cP_\mu^{-\Delta}:  \cS(G)\to C^\infty(G)$ be the spectral projector defined in \eqref{eq:spec_proj_alt} corresponding to the eigenvalue $\mu$ for the sub-Laplacian $\Delta$ on $G$. 
\begin{definition}
	Denote by $\kappa_\mu\in \cS'(G)$ the right convolution kernel of the spectral projector $\cP_\mu$, i.e., the tempered distribution for which
	\begin{equation}
		\cP_\mu f(x, z)= f*_G \kappa_\mu(x, z),
	\end{equation}
	where convolution is in the sense of distributions.
\end{definition}
The existence of the right convolution kernel $\kappa_\mu\in \cS'(G)$ is guaranteed by the  Schwartz kernel theorem and the left-invariance of $\cP_\mu$. 

The following is an analog of \cite[Proposition~4.2]{BBG21}. 
 \begin{proposition}\label{def:GSlice}
	For all $\mu>0$ the tempered distribution $\kappa_\mu$ is a bounded smooth function on $G$ and is given by
	\begin{align}\label{eq:GSlice}
		\kappa_\mu(x, z)
		&=
		\frac{\mu^{d+m-1}}{(2\pi)^{d+m}}\sum_{\substack{k\in \N }}\frac{1}{(2k+d)^{d+m}}\int_{S\subset\fv^*}e_k^{ \mu\omega/{(2k+d)}}(x, z)d\sigma_S(\omega).
	\end{align}
	where $e_k^{\rho\omega}(x, z)=\sum_{|\alpha|=k}E_{\alpha\alpha}^{\rho\omega}(x, z)$ and $d\sigma_S$ is the volume form on the unit sphere $S\subset \fv^*$. 
\end{proposition}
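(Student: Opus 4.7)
The plan is to identify $\kappa_\mu$ by comparing the convolution identity $\cP_\mu f = f *_G \kappa_\mu$ with the representation \eqref{eq:spec_proj_alt} for $\cP_\mu f$, and then justify that the candidate expression on the right of \eqref{eq:GSlice} actually defines a bounded smooth function on $G$ (not merely a tempered distribution). Uniqueness of the right convolution kernel, which is guaranteed by the Schwartz kernel theorem together with the left-invariance of $\cP_\mu$, will then close the argument.

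The first step is to establish a pointwise, $k$-polynomial bound for $e_k^{\rho\omega}$. Recall that $e_k^\lambda(x,z)=\sum_{|\alpha|=k}E_{\alpha\alpha}^\lambda(x,z)$ and that the matrix coefficients $E_{\alpha\alpha}^\lambda$ are built from Hermite eigenfunctions of $H(\lambda)$. Using the classical identity expressing $e_k^\lambda$ through a generalized Laguerre function together with the standard bound $|L_k^{d-1}(t)|e^{-t/2}\leq C_d\binom{k+d-1}{d-1}$, one obtains the uniform estimate
\begin{equation*}
\bigl|e_k^{\rho\omega}(x,z)\bigr|\leq C_d(k+1)^{d-1}, \qquad (x,z)\in G,\ \omega\in S,\ \rho>0.
\end{equation*}
Analogous estimates for derivatives in $(x,z)$ hold with polynomial growth in $k$ that is independent of $\rho$ on bounded sets.

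The second step is to insert this bound into the series \eqref{eq:GSlice}. Since $\mathrm{vol}(S)<\infty$, the sum is dominated termwise by
\begin{equation*}
\sum_{k\geq 0}\frac{(k+1)^{d-1}}{(2k+d)^{d+m}}\;\lesssim\;\sum_{k\geq 0}(k+1)^{-m-1},
\end{equation*}
which converges for every $m\geq 1$. This gives uniform absolute convergence of the series on $G$, and the same scheme, applied to derivatives, yields smoothness of the sum as well as the claimed boundedness of $\kappa_\mu$. Let $K$ denote the bounded smooth function so defined.

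The third step is to show $K=\kappa_\mu$. For $f\in\cS(G)$ one forms the convolution $f*_G K(x,z)$; the uniform absolute summability just established allows us to apply Fubini to interchange $*_G$ with both the sum in $k$ and the integral over $S$, producing
\begin{equation*}
f*_G K(x,z)=\frac{\mu^{d+m-1}}{(2\pi)^{d+m}}\sum_{k\in\N}\frac{1}{(2k+d)^{d+m}}\int_{S}f*_G e_k^{\mu\omega/(2k+d)}(x,z)\,d\sigma_S(\omega),
\end{equation*}
which is precisely \eqref{eq:spec_proj_alt}, i.e.\ $\cP_\mu f(x,z)$. By the uniqueness of the right convolution kernel, $K=\kappa_\mu$ as tempered distributions, and the explicit formula \eqref{eq:GSlice} follows. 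The delicate point is the first step: one must produce the Laguerre-type representation of $e_k^{\rho\omega}$ in the $H$-type (not merely Heisenberg) setting, which is where the $H$-type identity $J_\lambda^2=-|\lambda|^2 I$ and the intertwining map $\alpha_\lambda\colon G\to\heis^d$ of Lemma \ref{lem:projec} enter: they reduce the computation of $e_k^{\rho\omega}$ on $G$ to the known one on $\heis^d$, after which the Laguerre bound applies.
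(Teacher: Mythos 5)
Your overall strategy — identify $\kappa_\mu$ from \eqref{eq:spec_proj_alt} by Fubini and uniqueness of the right convolution kernel, and justify convergence via a $k$-polynomial bound on $e_k^{\rho\omega}$ — matches the paper's proof. The genuine difference is how you obtain the key bound $|e_k^{\rho\omega}(x,z)|\lesssim (k+1)^{d-1}$. You route through the Laguerre representation of $e_k^\lambda$ and the classical estimate $|L_k^{d-1}(t)|e^{-t/2}\leq C_d\binom{k+d-1}{d-1}$, and flag as ``delicate'' the need to transport this from $\heis^d$ to the $H$-type group via $\alpha_\lambda$. The paper's argument is considerably lighter: since $\pi_\lambda(x,z)$ is unitary and the Hermite functions $\Phi_\alpha^{|\lambda|}$ are normalized, Cauchy--Schwarz gives directly $|E_{\alpha\alpha}^\lambda(x,z)|=|\langle\pi_\lambda(x,z)\Phi_\alpha^{|\lambda|},\Phi_\alpha^{|\lambda|}\rangle|\leq 1$, whence $|e_k^\lambda(x,z)|\leq\#\{\alpha\in\N^d:|\alpha|=k\}=\binom{k+d-1}{d-1}$. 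This bound is uniform in $\lambda$, valid on any step-$2$ group where the Schr\"odinger representations act, and sidesteps any Laguerre machinery or $H$-type-to-Heisenberg reduction. So your argument is correct and reaches the same decay, but the Laguerre detour (and the worry about the $H$-type reduction) is unnecessary — the unitarity of the representation already does all the work, and this is precisely what makes the ``delicate point'' you isolate not delicate at all. One further remark: the paper's written proof only establishes that the series converges uniformly to a bounded \emph{continuous} function; your observation that the same scheme with derivative bounds yields smoothness is a sensible supplement, though in both cases the details are left implicit.
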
 
\begin{remark}
	Also note here that
	$e^{\rho\omega}_k(x, z)=e_k^{\rho}(T_\omega x, \omega\cdot z)$, where $T_{\lambda}$ denotes the linear map diagonalizing $J_{\lambda}$ as defined in \eqref{eq:diagJ}.
\end{remark}
\begin{proof}
	The expression for $\kappa_\mu$ with convergence in $\cS'(G)$ in the proposition is a direct consequence of the definition of $\cP_\mu$ in Equation \eqref{eq:spec_proj_alt}. 
	To prove that $\kappa_\mu$ is a bounded smooth function, first note that by the Cauchy Schwartz Inequality, 
	\begin{align*}
		|E_{\alpha\alpha}^\lambda(x, z)|=|\langle \pi_\lambda(x, z)\Phi_\alpha^\lambda, \Phi_\alpha^\lambda\rangle|\leq 1
	\end{align*}
	so
	\begin{align*}
		|e_k^\lambda(x, z)|\leq \sum_{|\alpha|=k}1=\frac{(k+d-1)!}{k!(d-1)!}.
	\end{align*}
	Therefore
	\begin{align*}
		|\kappa_{\mu}(x, z)|\leq \frac{\textrm{vol}(S)\mu^{d+m-1}}{(2\pi)^{d+m}(d-1)!}\sum_{k\in \N}\frac{(k+d-1)!}{k!(2k+d)^{d+m}}<\infty,
	\end{align*}
	whence the series in the definition $\kappa_{\mu}$ converges uniformly to a  bounded continuous function on $G$.
\end{proof}
 The goal of this section is to prove Theorem \ref{tm:TTstar}.

\subsection{Extended Space}
Let $\mathbb{G}=\R\times G$ be the extended space  and $\widehat{\mathbb{G}}=\mathbb{R}\times (\mathfrak{v}^*\setminus \{0\})$. 
Here $\mathcal{L}_2$ denotes the space of Hilbert-Smith operators on $L^2(\R^d)$ with the norm
$$\|A\|_{\cL_2}:=\left(\trace{\sqrt{A^*A}}\right)^{\frac{1}{2}}.$$

\begin{definition}
	Let $L^2(\widehat{\mathbb{G}})=L^2(\widehat{\mathbb{G}}, \mathcal{L}_2)$ be the space of all measurable families of operators $\Theta(\mu, \lambda)\in \cL(L^2(\R^d))$ for almost all $(\mu, \lambda)\in \widehat{\mathbb{G}}$ such that 
	\begin{equation}
		\|\Theta\|_{L^2(\bDhat)}^2:=\frac{1}{2\pi}\int_{-\infty}^\infty \int_{\mathfrak{v}^*}\|\Theta (\mu, \lambda)\|_{\cL_2}^2d\sigma_{\widehat{G}}(\lambda)d\mu<\infty.
	\end{equation}
	Notice that $L^2(\bDhat)$ is a Hilbert space with the obvious inner product. 
\end{definition}
Then the Plancherel theorem takes the form
\begin{equation}
	\langle\cF_\D(f), \cF_\D(g)\rangle_{L^2(\bDhat)}=\langle f, g\rangle_{L^2(\D)}.
\end{equation}
\begin{remark}
	As stated previously, we may think of any element $\Theta\in \cS(\widehat{\mathbb{G}})$ as a family of operators $(\mu, \lambda)\mapsto \Theta(\mu, \lambda)$ parameterized by $(\mu, \lambda)\in \widehat{\R}\times (\fv^*\mz)$ or as a scalar valued function $\Theta(\mu, \lambda, \alpha, \beta)=\langle\Theta(\mu, \lambda)\Phi_\alpha^\lambda, \Phi_\beta^\lambda\rangle$ defined on $\widehat{\R}\times (\fv^*\mz)\times \N^{2d}$. Then $L^2(\widehat{\mathbb{G}})$ is isomorphic to the Hilbert space of functions $(\mu, \lambda, \alpha, \beta)\mapsto \Theta(\mu, \lambda, \alpha, \beta)$ such that 
	\begin{align*}
		\sum_{\alpha, \beta\in \N^d}\int_{-\infty}^\infty\int_{\fv^*}|\Theta(\mu, \lambda, \alpha, \beta)|^2d\sigma_{\widehat{G}}(\lambda)d\mu<\infty.
	\end{align*}
\end{remark}
\subsection{Polar coordinates}
Recall from Section~\ref{s:fheis} that we defined the Heisenberg fan by
\begin{align*}
	\Sigma:=\{(\mu, \lambda)\in \widehat{\R}\times (\fv^*\mz)\mid  \mu=|\lambda| (2k+d), \ \text{for some }k\in \N\},
\end{align*}
Recall that  we can decompose
$$\Sigma=\bigcup_{k\in \N} \Sigma _{k},\qquad \Sigma_{k}:=\{(\mu, \lambda)\in \widehat{\R}\times (\fv^*\mz)\mid  \mu=|\lambda| (2k+d)\},$$
and defining  
\begin{align}\label{eq:steve1}
	\langle d\Sigma_k, \Theta\rangle
	=\frac{1}{(2\pi)^{d+m}}\sum_{|\alpha|=k}\int_{\fv^*} \Theta\left(|\lambda|(2k+d), \lambda, \alpha, \alpha\right)|\lambda|^d d\lambda.
\end{align}
\begin{align}\label{eq:steve2}
	\langle d\Sigma, \Theta\rangle
	:=
	\frac{1}{(2\pi)^{d+m}}\sum_{\substack{\alpha\in \N^d }}\int_{\fv^*} \Theta\left(|\lambda|(2|\alpha|+d), \lambda,  \alpha, \alpha\right)|\lambda|^{d} d\lambda
\end{align}

Writing \eqref{eq:steve1} and \eqref{eq:steve2} in polar coordinates $\lambda=\rho\omega$ gives
\begin{align*}
	\frac{1}{(2\pi)^{d+m}}\sum_{|\alpha|=k}\int_{S\subset\fv^*} \int_0^\infty\Theta\left(\rho(2k+d), \rho\omega, \alpha, \alpha\right)\rho^{d+m-1} d\rho\, d\sigma_S(\omega).
\end{align*}
where $S\subset \fv^*$ is the unit sphere. 
The rescaling $\rho\mapsto \mu=\rho/(2k+d)$ yields
\begin{align*}
	\langle d\Sigma_k, \Theta\rangle
	=
	\frac{1}{(2\pi)^{d+m}}&\sum_{|\alpha|=k}\frac{1}{(2k+d)^{d+m}}\int_{S} \int_0^\infty\Theta\left(\mu, \frac{\mu\omega}{2k+d}, \alpha, \alpha\right)\mu^{d+m-1} d\mu\, d\sigma_S(\omega).
\end{align*}
which in particular gives
\begin{align*}
	\langle d\Sigma, \Theta\rangle
	=
	\frac{1}{(2\pi)^{d+m}}&\sum_{\alpha \in \N^{n}}\frac{1}{(2k+d)^{d+m}}\int_{S} \int_0^\infty\Theta\left(\mu, \frac{\mu\omega}{2k+d}, \alpha, \alpha\right)\mu^{d+m-1} d\mu\, d\sigma_S(\omega).
\end{align*}

\begin{remark}\label{r:later}
	Later, for technical considerations, it will be convenient to multiply the measure $d\Sigma$ with a cutoff function $\psi\in C_c^\infty(\R, [0, 1])$ as follows
	\begin{align*}
		\langle d\Sigma_{\psi}, \Theta\rangle
		=
		\frac{1}{(2\pi)^{d+m}}&\sum_{\alpha \in \N^{n}}\frac{1}{(2k+d)^{d+m}}\int_{S} \int_0^\infty\Theta\left(\mu, \frac{\mu\omega}{2k+d}, \alpha, \alpha\right)\mu^{d+m-1} \psi(\mu)d\mu\, d\sigma_S(\omega).
	\end{align*}
	
\end{remark}
\subsection{More on the Fourier side}Recall that $\cP_\mu=\cP_\mu^{-\Delta}$ is the spectral projector for $-\Delta$. It is a standard fact in the Fourier theory of groups (see \cite{FF20}) that, for $f\in \cS(G)$ we have
\begin{align*}
	\cF_G(\cP_\mu f)(\lambda)=\cF_G(f)(\lambda)\cP^{H(\lambda)}_\mu
\end{align*}

where $\cP_\mu^{H(\lambda)}: L^2(\R^d)\to \Pi(\mu, \lambda)$ is the spectral projector onto the $\mu$-eigenspace of the operator $H(\lambda)$. Note that when $\mu=|\lambda|(2k+d)$ for some $k\in \N$, we have $\cP_\mu^{H(\lambda)}=P_k(\lambda)$, and when $\mu\neq |\lambda|(2k+d)$ for all $k\in \N$, the projecton $\cP_\mu^{H(\lambda)}$ is zero. It turns out that 
$$d\Sigma(\mu, \lambda)=\cP_\mu^{H(\lambda)}d\sigma$$
where $d\sigma$ is the measure on $\Sigma$.

For any bounded operator $A\in \cL(L^2(\R^d))$, let 
$$|A|:=\sqrt{A^*A}. $$

Define $L^2(d\Sigma)$ to be the set of all measurable  families of operators $\Theta(\mu, \lambda): \Pi(\mu, \lambda)\to L^2(\R^d)$ for $(\mu, \lambda)\in \Sigma\subset \widehat{\mathbb{G}}$  such that 
\begin{equation}\label{def:lebesgue_parab}
	\|\Theta\|_{L^2( d\Sigma)}^2:=\int_{\Sigma}\trace{|\Theta(\mu, \lambda)|^2 d\Sigma(\mu, \lambda)}<\infty.
\end{equation}

 \begin{remark}
	We may think of an element $\Theta\in L^2(d\Sigma)$ as a scalar valued function $(\mu, \lambda, \alpha, \beta)\mapsto \Theta(\mu, \lambda, \alpha, \beta)$ on the subset  $\Sigma\times \N^{2d}$ with $\mu=|\lambda|(2|\alpha|+d)$. Then $L^2(d\Sigma)$ is isomorphic to the Hilbert space of all such functions  such that
	\begin{align*}
		\sum_{\substack{\alpha, \beta\in\N^d }}\int_{\fv^*}|\Theta(|\lambda|(2|\alpha|+d), \lambda, \alpha, \beta)|^2 d\sigma_{\widehat{G}}(\lambda)<\infty.
	\end{align*}
\end{remark}

\begin{lemma} \label{l:postotto2}
	Let ${\rm pr}_2: \bDhat\to \widehat{G}$ given by $(\mu, \lambda)\mapsto \lambda$ and  let $\theta: \widehat{G}\to \cL(L^2(\R^d))$. If $\Theta: \Sigma\subset \bDhat\to \cL(L^2(\R^d))$ is defined by $\Theta =\theta\circ{\rm pr}_2|_{\Sigma}$, for all $k\in \N$, then 
	\begin{equation}
		\|\Theta\|_{L^2(d\Sigma)}=\|\theta\|_{L^2(\widehat{G})}.
	\end{equation}
\end{lemma}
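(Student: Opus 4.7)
The plan is to unpack the definition of $\|\cdot\|_{L^2(d\Sigma)}$ using the disintegration $\Sigma=\bigcup_{k\in\N}\Sigma_k$ together with the representation $d\Sigma_k(\mu,\lambda)=P_k(\lambda)\otimes d\sigma_k(\mu,\lambda)$ given in Section~\ref{sec:dual_parab}, then exploit the fact that the Hermite eigenprojectors satisfy $\sum_{k\in\N} P_k(\lambda)=I$ on $L^2(\R^d)$ to collapse the sum to the full Plancherel integral on $\widehat G$.

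More precisely, I would first write
\begin{align*}
\|\Theta\|_{L^2(d\Sigma)}^2
=\sum_{k\in\N}\int_{\Sigma_k}\trace{|\Theta(\mu,\lambda)|^2\,P_k(\lambda)}\,d\sigma_k(\mu,\lambda),
\end{align*}
which is justified since $\Theta(\mu,\lambda)$, restricted to $\Pi(\mu,\lambda)$, only sees the image of $P_k(\lambda)$ on $\Sigma_k$, and the operator-valued measure factors as $P_k(\lambda)\otimes d\sigma_k$. Next, since $\Theta=\theta\circ{\rm pr}_2|_{\Sigma}$, we have $\Theta(\mu,\lambda)=\theta(\lambda)$ independent of $\mu$, so the integrand becomes $\trace{|\theta(\lambda)|^2 P_k(\lambda)}$. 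Integrating out the Dirac mass in $d\sigma_k$ (cf.~\eqref{eq:parab_measure_k_scalar}) eliminates the $\mu$-variable and yields $d\sigma_{\widehat G}(\lambda)=(2\pi)^{-d-m}|\lambda|^d d\lambda$ on $\fv^*\mz$.

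At this point one has
\begin{align*}
\|\Theta\|_{L^2(d\Sigma)}^2
=\sum_{k\in\N}\int_{\fv^*}\trace{|\theta(\lambda)|^2 P_k(\lambda)}\,d\sigma_{\widehat G}(\lambda).
\end{align*}
Since the integrand is nonnegative, Tonelli's theorem allows swapping sum and integral, and then the identity $\sum_{k\in\N}P_k(\lambda)=I_{L^2(\R^d)}$ (i.e.\ the spectral resolution of $H(\lambda)$) produces $\trace{|\theta(\lambda)|^2}=\|\theta(\lambda)\|_{\cL_2}^2$, whence $\|\Theta\|_{L^2(d\Sigma)}^2=\|\theta\|_{L^2(\widehat G)}^2$.

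The only mildly delicate point is checking that the formal manipulation of $d\Sigma_k=P_k(\lambda)\otimes d\sigma_k$ inside the trace is consistent with the definition \eqref{def:lebesgue_parab}, namely that $|\Theta|^2$ acting on the finite-dimensional subspace $\Pi(\mu,\lambda)$ matches the trace-class pairing with $P_k(\lambda)$ extended to all of $L^2(\R^d)$. This is an immediate consequence of the fact that $P_k(\lambda)$ is the orthogonal projector onto $\Pi(|\lambda|(2k+d),\lambda)$ and the cyclicity of the trace; no analytic subtlety enters beyond positivity, which is what makes the Tonelli step automatic.
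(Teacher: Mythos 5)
Your argument is correct and follows exactly the same route as the paper's proof: decompose $\Sigma$ into the sheets $\Sigma_k$, use $d\Sigma_k=P_k(\lambda)\otimes d\sigma_k$, substitute $\Theta(\mu,\lambda)=\theta(\lambda)$, push forward $d\sigma_k$ to $d\sigma_{\widehat G}$, and resum via $\sum_k P_k(\lambda)=I$. The additional remarks about Tonelli and the trace pairing are sound but not needed beyond what the paper already records.
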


\begin{proof}[Proof of Lemma \ref{l:postotto2}] We have  
	\begin{align*}
		\int_{\Sigma}\trace{|\Theta(\mu, \lambda)|^2 d\Sigma(\mu, \lambda)}
		&=\sum_{k=0}^\infty\int_{\Sigma_k}\trace{|\Theta(\mu, \lambda)|^2 d\Sigma_k(\mu, \lambda)}\\
		&=\sum_{k=0}^\infty\int_{\Sigma_k}\trace{|\Theta(\mu, \lambda)|^2 P_k(\lambda)}d\sigma_k(\mu, \lambda) \\
		&=\sum_{k=0}^\infty\int_{\fv^*}\trace{|\Theta(|\lambda|(2k+d), \lambda)|^2 P_k(\lambda))|}d\sigma_{\widehat{G}}(\lambda) \\
		&=\sum_{k=0}^\infty\int_{\fv^*}\trace{|\theta(\lambda)|^2 P_k(\lambda))}d\sigma_{\widehat{G}}(\lambda) \\
		&=\int_{\fv^*}\trace{|\theta(\lambda)|^2 }d\sigma_{\widehat{G}}(\lambda) \qedhere
	\end{align*}
\end{proof}

\subsection{Fourier transform $\cF_{\mathbb{G}}$ on tempered distributions}

Let $\cF^t_\D:\cS(\bDhat)\to  \cS(\bDhat)$ be the transpose of the Fourier transform $\cF_\D$ with respect to the bilinear pairing $\langle\cdot, \cdot\rangle: \cS(\D)\times\cS(\D)\to \C$ and $\langle\cdot, \cdot\rangle: \cS(\bDhat)\times\cS(\bDhat)\to \C$. That is, for all $f\in \cS(\D)$ and  $\Theta\in \cS(\bDhat),$
\begin{equation}
	\langle f, \cF^t_\D\Theta\rangle:=\langle \cF_\D f, \Theta\rangle.
\end{equation}
The group Fourier transform is defined on tempered distributions as follows:
\begin{equation}\label{eq:Fourier_temp_D}
	\cF_\D:
	\begin{cases}
		\cS'(\D)\longrightarrow \cS'(\bDhat) \\
		u\longmapsto \left(\Theta\mapsto \langle u, \cF_\D^t\Theta\rangle\right).
	\end{cases}
\end{equation}

\begin{lemma}\label{lem:Fourier_tp_D}
	For any $\Theta\in S(\mathbb{G})$, the inverse Fourier transform of $\Theta$ is related to the transpose via
	\begin{align*}
		\cF_{\mathbb{G}}^{t}\Theta=\iota^*\circ \cF^{-1}_{\mathbb{G}}\Theta
	\end{align*}
	where $\iota: \mathbb{G}\to \mathbb{G}$ is the involution
	$\iota(t, x, z)=(-t, -x, -z)$.
\end{lemma}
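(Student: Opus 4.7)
The plan is to establish the identity by unwinding the definitions of both sides and exploiting the unitarity of $\pi_\lambda$ together with the structure of step-2 nilpotent groups. The statement is essentially a bookkeeping identity: it says the transpose and the inverse Fourier transform differ only by the involution $\iota$, reflecting the fact that $\cF_\mathbb{G}$ already uses the representation $\pi_\lambda(x,z)$ rather than its adjoint.

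I would fix $\Theta \in \cS(\widehat{\mathbb{G}})$ and test $\cF_\mathbb{G}^t \Theta$ against an arbitrary $f \in \cS(\mathbb{G})$. By the defining property of the transpose,
\[
\langle f, \cF_\mathbb{G}^t \Theta \rangle = \langle \cF_\mathbb{G} f, \Theta \rangle,
\]
and the right-hand side expands, using the formula $\cF_\mathbb{G}(f)(\mu, \lambda) = \int_\mathbb{G} f(t, x, z)\, e^{i\mu t}\, \pi_\lambda(x, z)\, dt\, dx\, dz$ together with linearity of the trace and the duality bracket on $\cS(\widehat{\mathbb{G}})$, to an iterated integral over $\mathbb{G}\times \widehat{\mathbb{G}}$ of $f(t,x,z)\, e^{i\mu t}\, \tr\bigl(\pi_\lambda(x,z)\,\Theta(\mu,\lambda)\bigr)$ against $dt\,dx\,dz\, d\mu\, d\sigma_{\widehat{G}}(\lambda)$. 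Schwartz decay of $f$ together with $\Theta\in \cS(\widehat{\mathbb{G}})$ (each $\Theta(\mu,\lambda)$ trace class with rapid decay in $(\mu,\lambda)$, and the Plancherel density $|\lambda|^d$ being polynomial) ensures absolute convergence, justifying Fubini.

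After interchanging the order of integration the outer integral is over $\mathbb{G}$, and it then suffices to identify the inner integral
\[
\int_{\widehat{\mathbb{G}}} e^{i\mu t}\, \tr\bigl(\pi_\lambda(x, z)\,\Theta(\mu, \lambda)\bigr)\, d\mu\, d\sigma_{\widehat{G}}(\lambda)
\]
with $\cF_\mathbb{G}^{-1}(\Theta)(-t, -x, -z)$. This is the only genuinely nonformal point: in a step-2 nilpotent group $(x, z)^{-1} = (-x, -z)$, since the quadratic commutator correction in the group law cancels when both factors have their signs flipped. Because $\pi_\lambda$ is a strongly continuous unitary representation, this gives $\pi_\lambda(-x, -z) = \pi_\lambda((x, z)^{-1}) = \pi_\lambda(x, z)^*$, equivalently $\pi_\lambda(x,z) = \pi_\lambda(-x,-z)^*$. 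Substituting this into the inner integral and comparing with the inverse Fourier formula from Section~\ref{s:fheis} identifies it with $\cF_\mathbb{G}^{-1}(\Theta)(-t, -x, -z) = (\iota^* \cF_\mathbb{G}^{-1}\Theta)(t, x, z)$, as claimed.

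I do not expect a real obstacle; the argument reduces to Fubini plus the unitary identity $\pi_\lambda(x, z)^* = \pi_\lambda(-x, -z)$. The only care required is in the bookkeeping of the normalizing constants, namely the factor $(2\pi)^{-d-m-1}$ in $\cF_\mathbb{G}^{-1}$ and the Plancherel density $d\sigma_{\widehat{G}}(\lambda) = (2\pi)^{-d-m}|\lambda|^d\, d\lambda$, so that the bilinear duality bracket used on $\cS(\widehat{\mathbb{G}})$ is normalized consistently with $\cF_\mathbb{G}^{-1}$ and the conclusion comes out literally as $\iota^*\cF_\mathbb{G}^{-1}\Theta$ with no extraneous prefactor.
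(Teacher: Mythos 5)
Your overall strategy is the right one: test $\cF_\D^t\Theta$ against $f\in\cS(\D)$, apply Fubini (justified by Schwartz decay and polynomial growth of the Plancherel density), and absorb the spatial reflection via the unitarity identity $\pi_\lambda(-x,-z)^*=\pi_\lambda(x,z)$, which holds because $(x,z)^{-1}=(-x,-z)$ in exponential coordinates on a step-$2$ group. The normalization bookkeeping also lines up: the $\tfrac{1}{2\pi}$ on the $\mu$-integral and $d\sigma_{\widehat G}(\lambda)=(2\pi)^{-d-m}|\lambda|^d d\lambda$ combine to exactly the $(2\pi)^{-d-m-1}$ in the inverse transform. So far so good.

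However, as written your proof has a genuine sign gap in the $t$-variable. You expanded $\cF_\D f$ using the printed formula $\int f(t,x,z)\,e^{+i\mu t}\pi_\lambda(x,z)\,dt\,dx\,dz$, which gives an inner integral carrying $e^{+i\mu t}$. But $\cF_\D^{-1}\Theta(-t,-x,-z)$ carries $e^{-i\mu t}$ (since $\cF_\D^{-1}$ carries $e^{+i\mu t}$ and you are substituting $t\mapsto -t$). These do not match, and neither Fubini nor $\pi_\lambda(-x,-z)^*=\pi_\lambda(x,z)$ can repair the $t$-mismatch. The source of the problem is a typo in the paper's displayed definition of $\cF_\D$: the forward transform must carry $e^{-i\mu t}$, as is used everywhere else in the paper — see the scalar mode $f^\mu(x,z)=\int f(t,x,z)e^{-i\mu t}dt$ in Equation~\eqref{eq:fourier_trans_central0}, and the expansion $\Theta(\mu,\lambda,\alpha,\alpha)=\int e^{-i\mu t}\langle E_{\alpha\alpha}^\lambda,f(t,\cdot)\rangle\,dt$ inside the proof of Proposition~\ref{prop:GSigma}. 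Only with $e^{-i\mu t}$ in the forward transform is $\cF_\D^{-1}$ actually the inverse in the $t$-direction; otherwise $\cF_\D^{-1}\cF_\D f(t,\cdot)=f(-t,\cdot)$.

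So you characterized the unitarity of $\pi_\lambda$ as ``the only genuinely nonformal point,'' but that only accounts for the reflection in $(x,z)$. The reflection in $t$ has its own source — the opposite signs of the $\mu t$ exponents in $\cF_\D$ and $\cF_\D^{-1}$ — and that is precisely where your proof as written silently breaks. Replace $e^{+i\mu t}$ by $e^{-i\mu t}$ in the forward transform and your argument closes correctly: the inner integral then carries $e^{-i\mu t}$, matching the $t\mapsto -t$ substitution, while the $(x,z)$-reflection is absorbed by unitarity exactly as you describe.
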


Considering the cutoff measure $d\Sigma_{\psi}$ as in Remark~\ref{r:later}, we write in the sense of tempered distributions
\begin{align}
	\kappa_{\Sigma_\psi}:=\cF_\D^{-1}(d\Sigma_\psi), \qquad \text{or}\qquad  \cF_\D(\kappa_{\Sigma_\psi})=d\Sigma_{\psi}.
\end{align}
The presence of the cutoff is used to guarantee the boundedness of the convolution kernel.
\begin{proposition}\label{prop:GSigma}
	The tempered distribution $\kappa_{\Sigma_\psi}$ is in $L^\infty(\D)\cap C(\D)$ and is given by
	\begin{equation}
		\kappa_{\Sigma_\psi}(t, x, z)=
		\frac{1}{(2\pi)^{d+m}}\sum_{k\in \N}\frac{1}{(2k+d)^{d+m}}\int_0^\infty\int_{S} e^{i\mu t}e_k^{\mu\omega/(2k+d)}(x, z)\psi(\mu)\mu^{d+m-1}d\sigma_S(\omega)d\mu 
	\end{equation}
	where $	e_k^{\lambda}:=\sum_{|\alpha|=k}E_{\alpha\alpha}^{\lambda}$ and $d\sigma_S$ is the intrinsic surface measure on the unit sphere $S\subset \fv^*$. 
\end{proposition}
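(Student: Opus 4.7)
The plan is to compute $\cF_\D^{-1}(d\Sigma_\psi)$ directly from the polar-coordinate expression for $d\Sigma_\psi$ given in Remark~\ref{r:later}, interchange summation and integration via Fubini, and then independently verify that the resulting candidate kernel $K(t,x,z)$ (the right-hand side displayed in the statement) is bounded and continuous. Since $K$ is bounded, the distributional identity $\cF_\D(K)=d\Sigma_\psi$ can then be checked by pairing both sides against test functions.

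First, testing $\kappa_{\Sigma_\psi}$ against $\phi\in \cS(\D)$ via the duality definition \eqref{eq:Fourier_temp_D} together with Lemma~\ref{lem:Fourier_tp_D} reduces, up to the involution $\iota$, to evaluating $\langle d\Sigma_\psi,\cF_\D\phi\rangle$. Plugging in the polar form of $d\Sigma_\psi$ from Remark~\ref{r:later} and the identity $\cF_\D\phi(\mu,\lambda,\alpha,\alpha)=\int_\D \phi(t,x,z)e^{i\mu t}E_{\alpha\alpha}^{\lambda}(x,z)\,dt\,dx\,dz$, and grouping the sum over $\alpha\in\N^d$ according to $k=|\alpha|$ using $e_k^{\lambda}=\sum_{|\alpha|=k}E_{\alpha\alpha}^{\lambda}$, one obtains
\begin{equation*}
\langle\kappa_{\Sigma_\psi},\phi\rangle=\int_\D \phi(t,x,z)\,K(t,x,z)\,dt\,dx\,dz,
\end{equation*}
provided the exchange of summation and integration is justified. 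This identifies the tempered distribution $\kappa_{\Sigma_\psi}$ with the locally integrable function $K$.

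Second, I verify $K\in L^\infty(\D)\cap C(\D)$ by the same type of estimate used in the proof of Proposition~\ref{def:GSlice}. Cauchy--Schwarz and unitarity of $\pi_\lambda$ give $|E_{\alpha\alpha}^{\lambda}(x,z)|\leq 1$, hence $|e_k^{\lambda}(x,z)|\leq \binom{k+d-1}{d-1}$. Since $\psi\in C_c^\infty((0,\infty))$, the inner integral in $\mu$ is bounded by a constant depending on $\psi$, and therefore
\begin{equation*}
|K(t,x,z)|\,\lesssim\,\sum_{k\in\N}\frac{1}{(2k+d)^{d+m}}\binom{k+d-1}{d-1}\,\lesssim\,\sum_{k\in\N}\frac{k^{d-1}}{(2k+d)^{d+m}},
\end{equation*}
which converges since the general term is of order $k^{-(m+1)}$ and $m\geq 1$. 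This bound is uniform in $(t,x,z)$ and the summands are continuous, so dominated convergence yields continuity of $K$ on $\D$. The same absolute-convergence estimate retroactively justifies the Fubini step in the first paragraph.

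The main technical obstacle is bookkeeping: the polar form of $d\Sigma_\psi$ couples $\lambda$ to $|\alpha|$ through $\lambda=\mu\omega/(2|\alpha|+d)$, so the sum over $\alpha$ has to be reorganized over the level sets $\{|\alpha|=k\}$ before the substitution that identifies the $\mu$-variable with the frequency parameter, and one must keep careful track of the complex-conjugation convention between $\pi_\lambda(x,z)$ and $\pi_\lambda(x,z)^*$ coming from the definition of $\cF_\D^{-1}$. Once the reorganization is done cleanly, the formula of the proposition falls out, with the convergence and regularity of $K$ controlled uniformly by the combinatorial estimate above.
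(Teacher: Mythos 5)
Your proposal is correct and follows essentially the same route as the paper's own proof: both use the duality definition \eqref{eq:Fourier_temp_D} together with Lemma~\ref{lem:Fourier_tp_D} to pair $\kappa_{\Sigma_\psi}$ against test functions, reorganize the sum over $\alpha$ into level sets $|\alpha|=k$ to form $e_k^\lambda$, pass to polar coordinates with the substitution $\mu=\rho(2k+d)$, and establish boundedness and continuity via the Cauchy--Schwarz bound $|E^\lambda_{\alpha\alpha}|\le 1$ and the resulting $k^{-(m+1)}$ decay. The only cosmetic difference is that you start from the polar form of $d\Sigma_\psi$ in Remark~\ref{r:later}, whereas the paper works first in Cartesian form (Equation~\eqref{eq:GSigma1}) and then polar-coordinatizes at the end.
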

\begin{proof}
	By Definition \eqref{eq:Fourier_temp_D} and Lemma \ref{lem:Fourier_tp_D}, the tempered distribution $\cF_\D(\kappa_{\Sigma_\psi})$ acts on a test functions $\Theta=\cF_{\D}f\in \cS(\bDhat)$ via
	\begin{equation}
		\langle \cF_\D(\kappa_{\Sigma_\psi}), \Theta\rangle_{\cS'(\bDhat)\times\cS(\bDhat)}:=
		\langle \kappa_{\Sigma_\psi}, \iota^* f\rangle_{\cS'(\D)\times\cS(\D)}.
	\end{equation}
	By definition of $d\Sigma$ given above
	\begin{align*}
		\langle d\Sigma_\psi, \Theta\rangle_{\cM^+(\Sigma)\times\cS(\bDhat)}=\frac{1}{(2\pi)^{d+m}}\sum_{\alpha\in \N^d}\int_{\fv^*} \Theta\left(|\lambda|(2|\alpha|+d), \lambda, \alpha, \alpha\right)\psi(|\lambda|(2|\alpha|+d))|\lambda|^dd\lambda 
	\end{align*}
	If we take $\Theta=\cF_\D f$, for $f\in \cS(\D)$, then
	\begin{align*}
		\Theta\left(\mu, \lambda, \alpha, \alpha\right)=\int_{\R}e^{-i\mu t}\langle E_{\alpha\alpha}^{\lambda},  f(t, \cdot)\rangle_{\cS'(G)\times\cS(G)}dt.
	\end{align*}
	Writing 
	$e_k^{\lambda}:=\sum_{|\alpha|=k}E_{\alpha\alpha}^{\lambda}$, 
	one obtains
	\begin{align*}
		\langle \kappa_{\Sigma_\psi}, \iota^*f&\rangle_{\cS'(\mathbb{G})\times\cS(\D)}
		=\\
		&\frac{1}{(2\pi)^{d+m}}\sum_{k\in \N}\int_{\fv^*}e^{-i|\lambda|(2k+d) t}\langle e_k^{\lambda/(2k+d)}, \iota^*f(t, \cdot)\rangle_{\cS'(G)\times\cS(G)}\psi(|\lambda|(2|\alpha|+d))|\lambda|^d d\lambda 
	\end{align*}

	Thus 
	\begin{align}\label{eq:GSigma1}
		\kappa_{\Sigma_\psi}(t, x, z)=\frac{1}{(2\pi)^{d+m}}\sum_{k\in \N}\int_{\fv^*}e^{i|\lambda|(2k+d) t}e_k^\lambda(x, z)\psi(|\lambda|(2k+d))|\lambda|^dd\lambda 
	\end{align}
	Taking polar coordinates $\lambda=\rho\omega$ with $\rho\in (0, \infty)$ and $\omega\in S\subset \fv^*$ the unit sphere with measure $d\sigma_S$, followed by the substitution $\mu=\rho(2k+d)$ gives the expression for $\kappa_{\Sigma_\psi}$.
	
	To prove boundedness of $\kappa_{\Sigma_\psi}$, note that by the Cauchy Schwartz Inequality, 
	\begin{align*}
		|E_{\alpha\alpha}^\lambda(x, z)|=|\langle \pi_\lambda(x, z)\Phi_\alpha^\lambda, \Phi_\alpha^\lambda\rangle|\leq 1
	\end{align*}
	so
	\begin{align*}
		|e_k^\lambda(x, z)|\leq \sum_{|\alpha|=k}1=\frac{(k+d-1)!}{k!(d-1)!}.
	\end{align*}

	Therefore
	\begin{align*}
		|\kappa_{\Sigma_\psi}(t, x, z)|\leq \frac{\textrm{vol}(S)\|(\cdot)^{d+m-1}\psi\|_{L^1(0, \infty)}}{(2\pi)^{d+m}(d-1)!}\sum_{k\in \N}\frac{(k+d-1)!}{k!(2k+d)^{d+m}}<\infty,
	\end{align*}
	whence $\kappa_{\Sigma_\psi}$ is a bounded continuous function on $\D$.
\end{proof}

\subsection{Proof of Lemma~\ref{l:postotto}} We prove that 
for any $k\in \N$, 
\begin{align*}
	\sum_{\substack{\alpha, \beta\in \N^d \\ |\alpha|=k}}\cF_Gu_0(\lambda, \alpha, \beta)\overline{E_{\alpha\beta}^\lambda(x, z)}=\trace{\pi_\lambda(x, z)\cF_Gu_0(\lambda)P_k(\lambda)}.
\end{align*}
\begin{proof} For $k\in \N$, $(x, z)\in G$, $\lambda\in \fv^*\mz$ and $u_0\in \cS(G)$, 
	\begin{align*}
		\trace{\pi_\lambda(x, z)\cF_Gu_0(\lambda)P_k(\lambda)}
		&=
		\sum_{\alpha\in \N^d}\langle \pi_\lambda(x, z)^*\cF_Gu_0(\lambda)P_k(\lambda)\Phi_\alpha^\lambda, \Phi_\alpha^\lambda\rangle\\
		&=
		\sum_{\substack{\alpha\in \N^d\\ |\alpha|=k}}\langle \pi_\lambda(x, z)^*\cF_Gu_0(\lambda)\Phi_\alpha^\lambda, \Phi_\alpha^\lambda\rangle\\
		&=
		\sum_{\substack{\alpha, \beta\in \N^d\\ |\alpha|=k}}\langle \cF_Gu_0(\lambda)\Phi_\alpha^\lambda, \Phi_\beta^\lambda\rangle \langle\Phi_\beta^\lambda, \pi_\lambda(x, z)\Phi_\alpha^\lambda\rangle\\
		&=
		\sum_{\substack{\alpha, \beta\in \N^d\\ |\alpha|=k}}\cF_Gu_0(\lambda, \alpha, \beta) \overline{E_{\alpha\beta}^\lambda(x, z)}.\qedhere
	\end{align*}
\end{proof}

\subsection{Restriction and extension operators}\label{sec:Restriction/Extension}
For $f\in \cS(\D)$, the restriction operator introduced in Section~\ref{s:fr} can be written as follows
$$\cR_{\Sigma}f:=\cF_{\mathbb{G}}f|_{\Sigma}.$$

The meaning of this expression is the restriction of the map $(\mu, \lambda)\mapsto \cF_{\mathbb{G}}f(\mu, \lambda)$ to $(\mu, \lambda)\in \Sigma\subset \widehat{\mathbb{G}}$ followed by restriction of the operator $\cF_{\mathbb{G}}f(\mu, \lambda): L^2(\R^d)\to L^2(\R^d)$ to the subspace $\Pi(\mu, \lambda)\subset L^2(\R^d).$

More explicitly, we have
\begin{align*}
	\cR_{\Sigma}f(\mu, \lambda):=\cF_{\mathbb{G}}f(\mu, \lambda)\cP_\mu^{H(\lambda)}, \quad (\mu, \lambda)\in \Sigma.
\end{align*}
\begin{remark}
	The restriction operator $\cR_{\Sigma}f$ corresponds to restriction of the scalar-valued Fourier transform $(\mu, \lambda, \alpha, \beta)\mapsto \cF_{\mathbb{G}}f(\mu, \lambda, \alpha, \beta)$ to the set $$\{(\mu, \lambda, \alpha, \beta)\in \widehat{\R}\times (\fv^*\mz)\times\N^{2d}:\quad \mu=|\lambda|(2|\alpha|+d)\}.$$   
\end{remark}

{\bf Notation.} In all this section we implicitly assume to work with the cutoffed measure $d\Sigma_{\psi}$ but for simplicity we remove $\psi$ from the notation and we simply write $d\Sigma$ for the measure given in Remark~\ref{r:later} and by $\kappa_{\Sigma}$ the cutoffed convolution kernel as in Proposition~\ref{prop:GSigma}.

\smallskip

Let us also introduce the $k$-th restriction operator defined analogously for $\Sigma_k$:
$$\cR_{\Sigma_k}f:=\cF_{\mathbb{G}}f|_{\Sigma_k}.$$
In particular, 
\begin{align*}
	\cR_{\Sigma_k} f(\mu, \lambda)=\cF_{\mathbb{G}}f(\mu, \lambda)\circ P_k(\lambda), \quad (\mu, \lambda)\in \Sigma_k,
\end{align*}

	and
	\begin{align*}
		\cR_{\Sigma}f=\sum_{k\in \N}\cR_{\Sigma_k}f.
	\end{align*}

	For $\Theta\in L^2(d\Sigma)$ define the Fourier extension operator by
	\begin{equation}
		\cE_{\Sigma}\Theta(t, x, z):=\cF_\D^{-1}(\Theta d\Sigma).
	\end{equation}
	Similarly, define the $k$-th Fourier extension operator by
	\begin{equation}
		\cE_{\Sigma_{k}}\Theta(t, x, z):=\cF_\D^{-1}(\Theta d\Sigma_k).
	\end{equation}
	One computes:
	\begin{align*}
		\cF_\D^{-1}(\Theta d\Sigma_k)(x, z)
		:=&
		\int_{\Sigma_k}\psi(\mu)e^{i\mu t}\trace{\pi_\lambda(x, z)^*\Theta(\mu, \lambda)d\Sigma_k(\mu, \lambda)}\\
		=&
		\int_{\Sigma_k}\psi(\mu)e^{i\mu t}\trace{\pi_\lambda(x, z)^*\Theta(\mu, \lambda)P_k(\lambda)}d\sigma_k(\mu, \lambda)
	\end{align*} 
	and
	\begin{align*}
		\cE_{\Sigma}\Theta=\sum_{k\in N}\cE_{\Sigma_{k}}\Theta.
	\end{align*}

	\begin{lemma}\label{lem:extention_parab} For all $k\in \N$, we have 
		$\cE_{\Sigma_{k}}=\cR_{\Sigma_{k}}^*$ with respect to $L^2(d\Sigma_{\psi})$ and $L^2(\D)$. Therefore $\cE_{\Sigma}=\cR_{\Sigma}^*$ with respect to $L^2(d\Sigma)$ and $L^2(\D)$. 
		 
	\end{lemma}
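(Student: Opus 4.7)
The plan is to verify the identity
$$
\langle f, \cE_{\Sigma_{k}}\Theta\rangle_{L^{2}(\D)} = \langle \cR_{\Sigma_{k}} f, \Theta\rangle_{L^{2}(d\Sigma_{\psi})}
$$
directly, on a dense subspace, and then extend by continuity. By density of $\cS(\D)$ in $L^{2}(\D)$ and of smooth compactly supported operator-valued families in $L^{2}(d\Sigma_{\psi})$, it is enough to take $f\in \cS(\D)$ and $\Theta$ smooth in $(\mu,\lambda)$ with $\mu$-support in $\mathrm{supp}\,\psi$. In this Schwartz setting all integrals and the fibrewise traces are absolutely convergent, so every manipulation below is justified by Fubini.

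The core computation is the following. Substitute the explicit formula for $\cE_{\Sigma_{k}}\Theta$, namely
$$
\cE_{\Sigma_{k}}\Theta(t,x,z)=\int_{\Sigma_{k}}\psi(\mu)\,e^{i\mu t}\,\tr\bigl[\pi_{\lambda}(x,z)^{*}\,\Theta(\mu,\lambda)\,P_{k}(\lambda)\bigr]\,d\sigma_{k}(\mu,\lambda),
$$
into the left-hand side, interchange the $(t,x,z)$ and $(\mu,\lambda)$ integrations, and recognize the inner integral as the adjoint of $\cF_{\D}f(\mu,\lambda)$ via the definition
$$
\cF_{\D}f(\mu,\lambda)=\int_{\D} f(t,x,z)\,e^{i\mu t}\,\pi_{\lambda}(x,z)\,dt\,dx\,dz.
$$
Using the cyclicity of the trace and the fact that $P_{k}(\lambda)$ is a self-adjoint projection, the resulting expression becomes
$$
\int_{\fv^{*}}\overline{\psi(|\lambda|(2k+d))}\,\tr\bigl[(\cF_{\D}f)(|\lambda|(2k+d),\lambda)^{*}\,\Theta(|\lambda|(2k+d),\lambda)\,P_{k}(\lambda)\bigr]\,d\sigma_{\widehat G}(\lambda),
$$
which, by the identities $\cR_{\Sigma_{k}} f = \cF_{\D}f\cdot P_{k}$ and $d\Sigma_{k}=P_{k}(\lambda)\otimes d\sigma_{k}$ recalled in Section~\ref{sec:dual_parab}, is exactly $\langle \cR_{\Sigma_{k}} f, \Theta\rangle_{L^{2}(d\Sigma_{\psi})}$. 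This proves $\cE_{\Sigma_{k}}=\cR_{\Sigma_{k}}^{*}$.

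For the global statement, use the decomposition $d\Sigma=\sum_{k\in\N}d\Sigma_{k}$ together with the corresponding operator decompositions $\cR_{\Sigma} f = \sum_{k}\cR_{\Sigma_{k}} f$ and $\cE_{\Sigma}\Theta=\sum_{k}\cE_{\Sigma_{k}}\Theta$. Summing the already-established adjoint identities in $k$ yields
$$
\langle f, \cE_{\Sigma}\Theta\rangle_{L^{2}(\D)} = \sum_{k}\langle \cR_{\Sigma_{k}} f,\Theta\rangle_{L^{2}(d\Sigma_{\psi})} = \langle \cR_{\Sigma} f,\Theta\rangle_{L^{2}(d\Sigma_{\psi})},
$$
for $f$ and $\Theta$ in the dense subspaces above, hence $\cE_{\Sigma}=\cR_{\Sigma}^{*}$ after passing to the $L^{2}$ closure.

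The main technical point is the justification of the Fubini interchange and of the summation in $k$: the $\psi$ cutoff confines $\mu$ to a compact set, each fibre eigenspace $\Pi(\mu,\lambda)$ is finite-dimensional with $\dim\Pi(\mu,\lambda)=\binom{k+d-1}{d-1}$, and the Schwartz decay of $f$ and smoothness of $\Theta$ control the $\lambda$-integrals uniformly. No delicate analysis beyond this bookkeeping is needed, since the identity is ultimately a formal consequence of the Plancherel theorem together with the trace-class interpretation of $d\Sigma_{k}$ recalled in Section~\ref{sec:dual_parab}.
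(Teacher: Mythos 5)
Your proof is correct and follows essentially the same route as the paper: the paper computes $\langle\Theta,\cR_{\Sigma_k}f\rangle_{L^2(d\Sigma_\psi)}$, expands $\cR_{\Sigma_k}f=\cF_{\D}f\cdot P_k$, applies Fubini and cyclicity of the trace, and recognizes the result as $\langle\cE_{\Sigma_k}\Theta,f\rangle_{L^2(\D)}$, whereas you run the identical manipulations starting from $\langle f,\cE_{\Sigma_k}\Theta\rangle_{L^2(\D)}$ and arrive at $\langle\cR_{\Sigma_k}f,\Theta\rangle_{L^2(d\Sigma_\psi)}$. Your explicit remarks on density, the finite-dimensionality of $\Pi(\mu,\lambda)$, and the role of the compact $\psi$-cutoff in justifying Fubini are sound and slightly more careful than the paper's terse treatment, but they do not constitute a different argument.
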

	\begin{proof}
		For $\Theta\in L^2(d\Sigma)$ and $f\in \cS(\widehat{\mathbb{G}})$
		\begin{align*}
			\langle \Theta, \cR_{\Sigma_{k}}f\rangle
			:=&
			\int_{\Sigma}\trace{\Theta(\mu, \lambda)^*\cR_{\Sigma_{k}}(f)(\mu, \lambda)d\Sigma(\mu, \lambda)}\\
			=&
			\sum_{l=0}^\infty\int_{\Sigma_l}\trace{\Theta(\mu, \lambda)^*\cR_{\Sigma_k}(f)(\mu, \lambda)P_l(\lambda)}\psi(\mu)d\sigma_l(\mu, \lambda).
		\end{align*}
		Then observe that,
		\begin{align*}
			\cR_{\Sigma_k}(f)(\mu, \lambda)P_l(\lambda)
			=\delta_{kl}\cF_{\mathbb{G}}f(\mu, \lambda)P_k(\lambda).
		\end{align*}
		Continuing, 
		\begin{align*}
			\langle \Theta, \cR_{\Sigma_k}f\rangle
			=&
			\int_{\Sigma_k}\trace{\Theta(\mu, \lambda)^*\cF_{\mathbb{G}}f(\mu, \lambda)P_k(\lambda)}\psi(\mu)d\sigma_k(\mu, \lambda)\\
			=&
			\int_{\Sigma_k}\int_{\mathbb{G}}e^{i\mu t}\trace{\Theta(\mu, \lambda)^*\pi _\lambda(x, z)^*P_k(\lambda)}f(x, z)\,dtdxdz\, \psi(\mu)d\sigma_k(\mu, \lambda)\\
			=&
			\int_{\mathbb{G}}\int_{\Sigma_k}\overline{e^{-i\mu t}\trace{\pi _\lambda(x, z)^*\Theta(\mu, \lambda)P_k(\lambda)}}\, \psi(\mu)d\sigma_k(\mu, \lambda)f(t, x, z)\,dtdxdz\\
			=&
			\int_{\mathbb{G}}\int_{\Sigma_k}\overline{e^{-i\mu t}\trace{\pi _\lambda(x, z)^*\Theta(\mu, \lambda)P_k(\lambda)}}\, \psi(\mu)d\sigma_k(\mu, \lambda)f(t, x, z)\,dtdxdz\\
			=&
			\int_{\mathbb{G}}\overline{\cE_{\Sigma_k}\Theta(\mu, \lambda)}f(t, x, z)\,dtdxdz. \qedhere
		\end{align*}
	\end{proof}
	
	\begin{proposition} For $f\in \cS(\mathbb{G})$,
		\begin{equation}
			\cR_{\Sigma}^* \cR_{\Sigma} (f)=f*_\mathbb{G} \kappa_{\Sigma}\label{eq:TT*-ker}
		\end{equation}
		where $\iota:\mathbb{G}\to\mathbb{G}$ is group inversion, and $\kappa_{\Sigma}$ the convolution kernel defined in Proposition \ref{prop:GSigma}.
	\end{proposition}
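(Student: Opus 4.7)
The plan is to combine Lemma~\ref{lem:extention_parab}, which identifies $\cR_\Sigma^*$ with the extension operator $\cE_\Sigma$, with the defining property $\kappa_\Sigma = \cF_\D^{-1}(d\Sigma)$ from Proposition~\ref{prop:GSigma} and the group-theoretic convolution theorem on $\D = \R\times G$. By Lemma~\ref{lem:extention_parab}, the identity to prove reduces to
\[
\cE_\Sigma(\cR_\Sigma f) = f *_\D \kappa_\Sigma, \qquad f\in\cS(\D).
\]

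The first step is to unwind definitions on the Fourier side. By construction,
\[
\cE_\Sigma(\cR_\Sigma f)(t,x,z) = \cF_\D^{-1}\bigl((\cR_\Sigma f)\cdot d\Sigma\bigr)(t,x,z),
\]
where the operator-valued product is interpreted via the decomposition $d\Sigma = \sum_{k\in\N} P_k(\lambda)\otimes d\sigma_k$ from Section~\ref{sec:dual_parab}. Since on $\Sigma_k$ we have $\cR_{\Sigma_k}f(\mu,\lambda) = \cF_\D f(\mu,\lambda)\,P_k(\lambda)$, and since $P_k(\lambda)$ is a projector, the idempotency $P_k(\lambda)^2 = P_k(\lambda)$ gives
\[
(\cR_\Sigma f)(\mu,\lambda)\cdot d\Sigma(\mu,\lambda) = \cF_\D f(\mu,\lambda)\cdot d\Sigma(\mu,\lambda)
\]
as a trace-class operator-valued measure on $\Sigma$.

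The second step is to apply the convolution theorem for the group Fourier transform on $\D$, which in the nilpotent setting reads $\cF_\D(f *_\D g) = \cF_\D(f)\cdot\cF_\D(g)$. Formally applied with $g=\kappa_\Sigma$, together with $\cF_\D(\kappa_\Sigma) = d\Sigma$, this yields
\[
\cF_\D(f *_\D \kappa_\Sigma) = \cF_\D(f)\cdot d\Sigma,
\]
so applying $\cF_\D^{-1}$ and comparing with Step~1 gives the claim.

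The main obstacle is to make this rigorous since $\kappa_\Sigma$ is only a tempered distribution (bounded but not integrable) and $d\Sigma$ is a singular operator-valued measure; the convolution theorem therefore has to be interpreted in $\cS'(\D)$ and the product $\cF_\D(f)\cdot d\Sigma$ must be justified as a trace-class operator-valued measure (which is where the presence of the cutoff $\psi$ absorbed in $d\Sigma$ pays off, via the bounds established in the proof of Proposition~\ref{prop:GSigma}). A safe route is to test against an arbitrary $g\in\cS(\D)$ and reduce both sides to the same scalar expression: by the adjoint property of Lemma~\ref{lem:extention_parab},
\[
\langle \cR_\Sigma^*\cR_\Sigma f, g\rangle_{L^2(\D)} = \langle \cR_\Sigma f, \cR_\Sigma g\rangle_{L^2(d\Sigma)},
\]
and expanding the right-hand side via the definition~\eqref{def:lebesgue_parab} of $L^2(d\Sigma)$, using $P_k^2=P_k$ and cyclicity of the trace, produces exactly the pairing $\langle f*_\D \kappa_\Sigma,g\rangle_{L^2(\D)}$ after recognising the resulting integral as the group convolution against $\kappa_\Sigma$ given by Proposition~\ref{prop:GSigma}. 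Alternatively, one may compute both sides completely explicitly using Corollary~\ref{cor:GP_link}, which writes $f *_\D \kappa_\Sigma$ as a Fourier integral over $\mu$ of $\cP_\mu(f^\mu) = f^\mu *_G \kappa_\mu$, and comparing term by term with the expansion of $\cE_\Sigma(\cR_\Sigma f)$ obtained from the proof of Lemma~\ref{lem:extention_parab}.
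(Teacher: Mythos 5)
Your proposal is essentially the paper's own argument, just reorganized: the paper also reduces everything to the Fourier side via the identity $\cR_{\Sigma_k}^*=\cE_{\Sigma_k}$, exploits $P_l(\lambda)\,d\Sigma_k=\delta_{kl}\,d\Sigma_k$ (your $P_k^2=P_k$ step) to collapse the cross terms, and recognizes the resulting integral as a convolution against $\cF_\D^{-1}(\psi\, d\Sigma_k)$ via Fubini — which is precisely the instance of the group convolution theorem you invoke abstractly. Your suggested rigorous fallback (testing against $g\in\cS(\D)$ and using the adjoint identity from Lemma~\ref{lem:extention_parab}) is a minor repackaging of the same computation, not a genuinely different route.
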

	\begin{proof} Fix $k, l\in \N$. We have 
		\begin{align*}
			\cR_{\Sigma_{k}}^*\cR_{\Sigma_{l}}f(t, x, z)
			&=
			\int_{\Sigma_k}\psi(\mu)e^{i\mu t}\trace{\pi_\lambda(x, z)^*\cR_{\Sigma_{l}}(\mu , \lambda)d\Sigma_{l}(\mu, \lambda)}\\
			&=\delta_{kl}
			\int_{\Sigma_k}\psi(\mu)e^{i\mu t}\trace{\pi_\lambda(x, z)^*\cF_{\D}f(\mu, \lambda)d\Sigma_{k}(\mu, \lambda)}
		\end{align*}
		since $P_l(\lambda)d\Sigma_{k}(\mu, \lambda)=\delta_{kl}d\Sigma_{k}(\mu, \lambda)$.
		By Fubini's Theorem and absolute integrability of the integrand, we have
		\begin{align*}
			\cR_{\Sigma_{k}}^*&\cR_{\Sigma_{l}}f(t, x, z)=\\
			&=
			\delta_{kl}\int_G f(t', x', z')
			\int_{\Sigma_k}\psi(\mu)e^{i\mu (t-t')}\trace{\pi_\lambda((x, z)^*(x', z'))d\Sigma_{k}(\mu, \lambda)}dt'dx'dz'\\
			&=
			\delta_{kl}\int_G f(t', x', z')\cF_{\D}^{-1}(\psi d\Sigma_k)((-t', -x', -z')(t, x, z))dt'dx'dz'
		\end{align*}
		Therefore
		\begin{align*}
			\cR_{\Sigma}^*\cR_{\Sigma}f
			&=\sum_{k=0}^\infty f*_{\mathbb{G}} \{\cF_{\mathbb{G}}^{-1}(\psi d\Sigma_k)\}=f*_{\mathbb{G}}  \kappa_{\Sigma}.
			\qedhere \end{align*}

	\end{proof}
	
	\subsection{Wave Equation}\label{s:wavedisint}
	In the case of the wave equation, we consider the sets
	\begin{equation}
		\Sigma^{\pm}=\{(\mu, \lambda)\in \bDhat: |\mu|^2=|\lambda|(2k+d), \, \pm\mu>0 \, \quad k\in \N\}
	\end{equation}
	With respect to the surfaces $\Sigma^{\pm}$ (or for the compact subset $\supp\psi\cap\Sigma^{\pm}$) the map $\cR_{\Sigma^{\pm}}f=\cF_{\mathbb{G}}f|_{\Sigma^{\pm}}$  is defined by  the restriction of the map $(\mu, \lambda)\mapsto \cF_{\mathbb{G}}f(\mu, \lambda)$ to $(\mu, \lambda)\in \Sigma^{\pm}$ followed by the restriction of the operator $\cF_{\mathbb{G}}f(\mu, \lambda)\in \cL(L^2(\R^d))$ to the finite dimensional eigenspace $\Pi(|\mu|^2, \lambda)=\ker(\mu^2I-H(\lambda))\subset L^2(\R^d)$. 
	Each component $\Sigma_k^\pm\subset \Sigma^\pm$ is equipped with the scalar measure $d\sigma_k^\pm=\pr|_{\Sigma_k^\pm}^*d\sigma_{\widehat{G}}$ on $\Sigma_k^\pm$. As in the Schr\"{o}dinger case, we define
	\begin{equation}
		d\Sigma^\pm_k(\mu, \lambda):=P_k(\lambda)\otimes d\sigma_k^\pm(\mu, \lambda).
	\end{equation}
	Following the same arguments as in Section \ref{sec:Restriction/Extension}, we obtain 
	\begin{proposition}\label{p:stev5}
		For $f\in \cS(\mathbb{G})$,  we have
		$$\cR_{\Sigma^{\pm}}^*\cR_{\Sigma^{\pm}}(\iota^*f)=\iota^*(f*_{\mathbb{G}}\kappa_{\Sigma^\mp})$$
		where $\iota:\mathbb{G}\to\mathbb{G}$ is group inversion and the convolution kernel $\kappa_{\Sigma^{\pm}}\in L^\infty(\mathbb G)\cap C(\mathbb G)$ is given by 
		\begin{align}\label{eq:k_wave}
			\kappa_{\Sigma^{\pm}}(t, x, z)=2\int_0^\infty e^{\pm i\mu t}\kappa_{\mu^2}(x, z)\psi(\mu)\mu^{d+m}d\mu.
		\end{align}
		with $\kappa_\mu$  given in Proposition \ref{def:GSlice}. 
	\end{proposition}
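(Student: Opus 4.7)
The plan is to mirror, branch by branch, the arguments used in the Schr\"odinger setting (Lemma~\ref{lem:extention_parab}, the proposition right after it, and Proposition~\ref{prop:GSigma}), treating each sheet $\Sigma^\pm$ of the wave fan $\{\mu^{2}=|\lambda|(2k+d)\}$ exactly as the paraboloid $\Sigma$ was treated, and recombining through the involution $\iota$.

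The first step is to verify $\cE_{\Sigma^\pm}=\cR_{\Sigma^\pm}^*$ with respect to the pairing between $L^{2}(d\Sigma_\psi^\pm)$ and $L^{2}(\D)$; this is a verbatim copy of Lemma~\ref{lem:extention_parab}, using the decomposition $d\Sigma^\pm=\sum_k P_k(\lambda)\otimes d\sigma_k^\pm$ and the orthogonality $P_k(\lambda)P_l(\lambda)=\delta_{kl}P_k(\lambda)$. The second step is the $TT^{*}$ identity itself: I would compose $\cR_{\Sigma^\pm}^{*}\cR_{\Sigma^\pm}$ acting on $\iota^{*}f$ exactly as in the proposition following Lemma~\ref{lem:extention_parab}, expanding $\cF_{\D}(\iota^{*}f)$ via the change of variable $(t,x,z)\mapsto(-t,-x,-z)$ and the unitarity $\pi_\lambda(g^{-1})=\pi_\lambda(g)^{*}$, and then applying Fubini's theorem to produce a right convolution on $\D$. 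The outer $\iota^{*}$ and the swap $\pm\mapsto\mp$ in the statement fall out automatically once the reversal $t\mapsto -t$ is propagated through the exponential $e^{i\mu t}$: this reflects $d\Sigma_\psi^\pm$ across $\mu=0$, and so exchanges the two sheets.

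The third step is the explicit representation~\eqref{eq:k_wave}. Following the proof of Proposition~\ref{prop:GSigma}, I would compute $\kappa_{\Sigma^\pm}=\cF_{\D}^{-1}(d\Sigma_\psi^\pm)$ directly from the inversion formula, pass to polar coordinates $\lambda=\rho\omega$ with $\omega$ on the unit sphere $S\subset\fv^{*}$, and make the radial substitution $\mu=\sqrt{\rho(2k+d)}$, whose Jacobian $d\rho=\tfrac{2\mu}{2k+d}\,d\mu$ produces the leading factor $2$. Matching the resulting integrand against the explicit series for $\kappa_{\mu^{2}}$ from Proposition~\ref{def:GSlice} then yields the claimed formula. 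Boundedness and continuity of $\kappa_{\Sigma^\pm}$ follow from the bound $|e_k^\lambda|\le \binom{k+d-1}{d-1}$ together with the compact support of $\psi$, exactly as in Proposition~\ref{prop:GSigma}. The main delicate point is the sign and involution bookkeeping in step two, since it is precisely there that the unavoidable $\iota^{*}$ and the branch swap $\pm\mapsto\mp$ in the statement must be pinned down correctly; the remainder is a routine transcription of the Schr\"odinger arguments.
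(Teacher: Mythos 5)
The proposal follows essentially the same route as the paper: expand $\kappa_{\Sigma^\pm}=\cF_{\D}^{-1}(d\Sigma_\psi^\pm)$ via the trace formula $\trace{\pi_\lambda(x,z)P_k(\lambda)}=e_k^\lambda(x,z)$, pass to polar coordinates $\lambda=\rho\omega$, substitute $\rho=\mu^2/(2k+d)$ (whose Jacobian gives the leading factor $2$), compare with the series for $\kappa_{\mu^2}$ from Proposition~\ref{def:GSlice}, and obtain boundedness from $|e_k^\lambda|\le\binom{k+d-1}{d-1}$ together with $\psi\in C_c^\infty$.

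Two comparative remarks. First, on the $TT^*$ step your plan re-runs the component-by-component Fubini argument from the Schr\"odinger proposition; the paper is terser and simply invokes the convolution theorem, writing $\cR_{\Sigma^\pm}^*\cR_{\Sigma^\pm}f=\cF_{\D}^{-1}(\cF_{\D}(f)\,d\Sigma_\psi^\pm)=f*_{\D}\cF_{\D}^{-1}(d\Sigma_\psi^\pm)$. Both are fine, but note that the paper thereby proves $\cR_{\Sigma^\pm}^*\cR_{\Sigma^\pm}f=f*_\D\kappa_{\Sigma^\pm}$ with no $\iota^*$ at all and leaves the stated $\iota^*$-twisted form implicit; so you are actually being more careful than the paper here. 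However, your assertion that the $\iota^*$ and the branch swap $\pm\mapsto\mp$ ``fall out automatically'' needs to be substantiated: on a non-abelian group the identity $(\iota^*f)*_\D g=\iota^*(f*_\D\check g)$, with $\check g=g\circ\iota$, does \emph{not} hold for general $g$, so to reach the literal statement one must use a specific symmetry of $\kappa_{\Sigma^\pm}$ under $(t,w)\mapsto(-t,w^{-1})$. Second, when you ``match the resulting integrand against the explicit series for $\kappa_{\mu^2}$'' I would urge you to actually carry out the power count: after the substitution the radial factor is $\mu^{2d+2m-1}d\mu$ while $\kappa_{\mu^2}$ carries a prefactor $(\mu^2)^{d+m-1}$, and the ratio is $\mu$, not $\mu^{d+m}$ — this is worth reconciling with~\eqref{eq:k_wave} (the discrepancy is harmless for the subsequent estimates since $\psi$ has compact support, but it should be flagged rather than absorbed silently).
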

	\begin{proof}
		Since 
		$\cR_{\Sigma^\pm}(f)d\Sigma^\pm=\cF_\mathbb{G}(f)d\Sigma^\pm$, we have 
		\begin{align*}
			\cR_{\Sigma^\pm}^*\cR_{\Sigma^{\pm}}f
			&=
			\cF_{\mathbb{G}}^{-1}(\cF_{\mathbb{G}}(f)d\Sigma_\psi^\pm)\\
			&=
			f*_\mathbb{G}\cF_{\mathbb{G}}^{-1}(d\Sigma_\psi^\pm)
		\end{align*}
		where convolution is in the sense of functions and distributions. 
		Then unraveling definitions, 
		\begin{align*}
			\kappa_{\Sigma^\pm}(t, x, z)
			&=
			\cF_{\mathbb{G}}^{-1}(d\Sigma^\pm)(t, x, z)\\
			&=
			\int_{\widehat{\mathbb{G}}}e^{i\mu t}\trace{\pi_\lambda(x, z)^*d\Sigma^{\pm}(\mu, \lambda)}\\
			&=
			\sum_{k\in \N}\int_{\widehat{\mathbb G}}e^{i\mu t}\trace{\pi_\lambda(x, z)^*P_k(\lambda)}\psi(\mu)d\sigma_k^\pm(\mu, \lambda).
		\end{align*}
		We then use the fact
		\begin{equation}
			\trace{\pi_\lambda(x, z)P_k(\lambda)}=\sum_{|\alpha|=k}E_{\alpha\alpha}^\lambda(x, z)=e_k^\lambda(x, z)
		\end{equation}
		and the definition of $d\sigma_k^\pm$ to obtain
		\begin{align*}
			\kappa_{\Sigma^\pm}(t, x, z)
			&=
			\frac{1}{(2\pi)^{d+m}}\sum_{\substack{k\in \N}}\int_{\fv^*}e^{\pm i\sqrt{|\lambda|(2k+d)} t}e_k^\lambda(x, z)\psi\left(\sqrt{|\lambda|(2k+d)}\right)|\lambda|^dd\lambda.
		\end{align*}
		Taking spherical coordinates $\lambda=\rho\omega$ on $\fv^*$, the above expression becomes
		\begin{align*}
			\frac{1}{(2\pi)^{d+m}}\sum_{\substack{k\in \N }}\int_0^\infty\int_{S}e^{\pm i\sqrt{\rho(2k+d)} t}e_k^{\rho\omega}(x, z)\psi\left(\sqrt{\rho(2k+d)}\right)d\sigma_S(\omega)\rho^{d+m-1}d\rho.
		\end{align*}
		The substitution $\rho=\mu^2/(2k+d)$ leads to 
		\begin{align*}
			\kappa_{\Sigma^\pm}(t, x, z)
			&=
			\frac{2}{(2\pi)^{d+m}}\sum_{\substack{k\in \N}}\frac{1}{(2k+d)^{d+m}}\int_0^\infty\int_{S}e^{\pm i\mu t}e_k^{\frac{\mu^2\omega}{2k+d}}(x, z)\psi\left(\mu\right)d\sigma_S(\omega)\mu^{2d+2m-1}d\mu.
		\end{align*}
		A comparison with the formula for $\kappa_\mu$ in Proposition \ref{def:GSlice} gives Equation \eqref{eq:k_wave}. The proof that $\kappa_{\Sigma\pm}\in L^\infty(\mathbb{G})\cap C(\mathbb{G})$ is identical to the proof of Proposition \ref{prop:GSigma}. 
	\end{proof}

	In the following proposition and its proof, we continue suppress $\psi$ and write $\Sigma=\Sigma_\psi$ for the compact surface obtained by restriction to the support of the cutoff $\psi$. 
	
	\begin{proposition}\label{prop:TTstarWave}
		Let $\mathbb{G}=\R\times G$, and $\kappa_{\Sigma^\pm}$ be defined in Proposition \ref{eq:k_wave}. For any $p, q\in [1, 2]$ and $1\leq r \leq 2(m+1)/(m+3)$  satisfying  $q\leq p$ and $r\leq p$   with $(m,p)\neq (1,2)$, there exists $C>0$ such that  for all $f\in \cS(\mathbb{G})$    
		\begin{equation}\label{eq:strichartz_estimate}
			\|f*_{\mathbb{G}} \kappa_{\Sigma^\pm}\|_{ L^{r'}_\fv L^{q'}_tL^{p'}_\fh}\leq C \left(\sum_{k=0}^\infty\frac{\|\Lambda_k\|_{L^p\to L^{p'}}}{(2k+d)^{m(\frac{1}{r}-\frac{1}{r'})+n(\frac{1}{p}-\frac{1}{p'})}} \right)\|f\|_{ L^r_\fv L^q_tL^p_\fh},
		\end{equation}
		where the series in the right hand side is convergent. 
	\end{proposition}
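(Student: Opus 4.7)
The proof plan is to mirror, step by step, the argument used for Proposition \ref{prop:TTstarintro2}, adjusting for the two differences between the Schr\"odinger and wave cases: the spectral parameter becomes $\mu^{2}$ instead of $\mu$, and an extra weight $\mu^{d+m}$ appears. First, I would derive the wave analog of Corollary \ref{cor:GP_link}: starting from the explicit formula \eqref{eq:k_wave} for $\kappa_{\Sigma^{\pm}}$ and recalling that $\cP_{\mu^{2}}$ has right convolution kernel $\kappa_{\mu^{2}}$, an interchange of the $G$-convolution with the $\mu$-integral and a recognition of the Euclidean Fourier transform in $t$ gives
\begin{equation*}
f*_{\mathbb{G}}\kappa_{\Sigma^{\pm}}(t,x,z)=2\int_{0}^{\infty}e^{\pm i\mu t}\cP_{\mu^{2}}(f^{\pm\mu})(x,z)\psi(\mu)\mu^{d+m}\,d\mu,
\end{equation*}
which is, up to a constant, the inverse Euclidean Fourier transform in $\mu\mapsto\mp t$ of the integrand (extended by zero to $\mu<0$).

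Next I would apply the anisotropic Hausdorff--Young inequality (Lemma \ref{l:an-haus-young}) exactly as in \eqref{eq:estimate1}, exploiting $q\leq p\leq 2\leq p'$, to pass from the $L^{q'}_{t}$-norm to an $L^{q}_{\mu}$-norm; then Minkowski's inequality (using $r'\geq p',q$) reorders the norms as $L^{q}_{\mu}L^{p'}_{\fh}L^{r'}_{\fv}$. The main step is then to apply Theorem \ref{th:bound_spec_proiec_Htype} pointwise in $\mu$ with eigenvalue $\mu^{2}$, which produces the series $\sum_{k}(2k+d)^{-m(\frac{1}{r}-\frac{1}{r'})-d(\frac{1}{p}-\frac{1}{p'})}\|\Lambda_{k}\|_{L^{p}\to L^{p'}}$ multiplied by a scalar factor $\|f^{\pm\mu}\|_{L^{p}_{\fh}L^{r}_{\fv}}$ and a weight $(\mu^{2})^{m(\frac{1}{r}-\frac{1}{r'})+d(\frac{1}{p}-\frac{1}{p'})-1}$.

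Combining this weight with the $\mu^{d+m}$ already present and with $\psi(\mu)$, one obtains a single compactly supported function of $\mu$ times $\|f^{\pm\mu}\|_{L^{p}_{\fh}L^{r}_{\fv}}$. H\"older's inequality in $\mu$ with $\frac{1}{b}+\frac{1}{q'}=\frac{1}{q}$ (as in \eqref{eq:holder1}) separates this scalar weight from $\|f^{\pm\mu}\|_{L^{q'}_{\mu}L^{p}_{\fh}L^{r}_{\fv}}$; the former is a finite $L^{b}_{\mu}$ quantity because $\psi$ has compact support away from zero, while for the latter Hausdorff--Young in $t$ (since $q\leq 2$) together with two Minkowski swaps gives
\begin{equation*}
\|f^{\pm\mu}\|_{L^{q'}_{\mu}L^{p}_{\fh}L^{r}_{\fv}}\leq \|f\|_{L^{p}_{\fh}L^{r}_{\fv}L^{q'}_{\mu}}\leq \|f\|_{L^{p}_{\fh}L^{r}_{\fv}L^{q}_{t}}\leq \|f\|_{L^{r}_{\fv}L^{q}_{t}L^{p}_{\fh}},
\end{equation*}
as in the Schr\"odinger case. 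Convergence of the series is unchanged and is already established in Proposition \ref{tm:TT*-Htype}.

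I do not anticipate a serious obstacle: the mildly different weight in $\mu$ is absorbed into the $\psi$-factor because the cutoff is compact, and the series exponent coincides with the one appearing in the Schr\"odinger bound. The only arithmetic I would verify explicitly is that, after the change of variable implicit in $\mu\mapsto \mu^{2}$, the power of $\mu$ remaining under $\psi$ is integrable on any compact interval and the index $b$ resulting from H\"older is finite; both hold trivially.
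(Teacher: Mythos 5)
Your proposal is correct and matches the paper's proof essentially step for step: derive the wave analogue of Corollary \ref{cor:GP_link} from \eqref{eq:k_wave}, apply the anisotropic Hausdorff--Young and Minkowski inequalities, invoke Theorem \ref{th:bound_spec_proiec_Htype} at eigenvalue $\mu^{2}$ so that the extra weight $\mu^{d+m}$ is absorbed into the compactly supported $\psi$-factor (the paper collects the resulting power of $\mu$ into the exponent $\eta$ of \eqref{eq:ai}), then H\"older in $\mu$ and the same Minkowski/Hausdorff--Young chain as in the Schr\"odinger case, with convergence of the series from Proposition \ref{tm:TT*-Htype}. You even correctly track the central Fourier argument as $f^{\pm\mu}$, which is slightly more careful than the paper's shorthand $f^{\mu}$, though it affects nothing since only $\|f^{\pm\mu}\|$ enters.
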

	
	\begin{proof}[Proof of Proposition \ref{prop:TTstarWave}]

		By Equation \eqref{eq:k_wave} and by arguing as in Corollary \ref{cor:GP_link}, we have

		\begin{equation}\label{eq:wave-kernelH} 
			f*_\mathbb{G} \kappa_{\Sigma^{\pm}}(t, x, z)=2\int_0^\infty e^{\pm i\mu t}\cP_{\mu^2}(f^{\mu})(x, z)\psi(\mu)\mu^{d+m}d\mu.
		\end{equation}
		where 
		$\cP_\mu=\cP_\mu^{-\Delta}$ is the generalized spectral projector given in Equation \eqref{eq:spec-proj-Htype}  for eigenvalue $\mu$ of the (minue) sub-Laplacian $-\Delta_{H}$ of $G$. We can rewrite identity \eqref{eq:wave-kernelH} as 
		\begin{equation}\label{eq:wave-kernel3}
			f*_\mathbb{G} \kappa_{\Sigma^\pm}(t, x, z)=2\mathcal{F}_{\mu\mapsto \mp t}\left[\cP_{\mu^2}(f^{\mu})(x, z)\psi_+(\mu)\mu^{d+m}\right]
		\end{equation}
		where $\cF_{\mu\mapsto \mp t}$ denotes the Euclidean Fourier transform in the $\mu$ variable evaluated at $\mp t$. 
		Under the same assumptions, since $q\leq p\leq 2\leq p'$, we may apply the anisotropic Hausdorff-Young inequality (Lemma~\ref{l:an-haus-young}) to Equation \eqref{eq:wave-kernel3} to  obtain  
		\begin{equation}\label{eq:estimateWave1}
			\|f*_{\mathbb{G}} \kappa_{\Sigma^\pm}\|_{L^{r'}_\fv L^{q'}_tL^{p'}_\fh}
			\leq 
			2\|\cP_{\mu^2}(f^{\mu})\psi_+(\mu)\mu^{d+m}\|_{ L^{r'}_\fv L^{q}_\mu L^{p'}_\fh}.
		\end{equation}
		where $\widehat{\R}_\mu$ is the dual of $\R_t$.
		Then by an application of the Minkowski Inequality,  the term on the right hand side in \eqref{eq:estimateWave1} is less than or equal to 
		\begin{equation*}
			2\|\cP_{\mu^2}(f^{\mu})\psi_+(\mu)\mu^{d+m}\|_{L^{q}_\mu L^{p'}_\fh L^{r'}_\fv}.
		\end{equation*}
		By Theorem \ref{th:bound_spec_proiec_Htype}, there exists $C>0$ such that, denoting
		\begin{equation}\label{eq:ai}
			\eta:=2m\left(\frac{1}{r}-\frac{1}{r'}\right)+2d\left(\frac{1}{p}-\frac{1}{{{p'}}}\right)-2+d+m
		\end{equation}
		we have
		\begin{align}\label{eq:spec-proj-estimateWave}
			\|\cP_{\mu^2}(f^{\mu})\psi_+(\mu)\mu^{d+m}\|_{L^{q}_\mu L^{p'}_\fh L^{r'}_\fv}
			&\leq
			C\bigg\|\|f^\mu\|_{L^p_\fh L^r_\fv}\psi_+(\mu) \mu^{\eta}\bigg\|_{L^q_\mu} \\
			&\times\left(\sum_{k=0}^\infty(2k+d)^{-m(\frac{1}{r}-\frac{1}{r'})-d(\frac{1}{p}-\frac{1}{{{p'}}})}\|\Lambda_k\|_{L^p_\fh\to L^{p'}_\fh}\right)
		\end{align}
		whenever the right side is finite. Note that  the series in \eqref{eq:spec-proj-estimateWave} is convergent. 
		Then, applying the H\"{o}lder Inequality on the first term on the right, for $\frac{1}{b}+\frac{1}{q'}=\frac{1}{q}$, 
		we have
		\begin{equation}\label{eq:holder1}
			\bigg\|\|f^\mu\|_{L^{p}_{\fh} L^{r}_{\fv}}\psi_+(\mu)\mu^{\eta}\bigg\|_{L^q_\mu}
			\leq
			\|f^\mu\|_{L^{q'}_\mu L^p_\fh L^r_\fv }\|\psi_+(\mu)\mu^{\eta}\|_{L^b_\mu}.
		\end{equation}
		
		The second term on the right hand side of \eqref{eq:holder1} is clearly finite. As in the proof of Proposition \ref{prop:TTstarintro2}, the first term is bounded as follows:
		\begin{align*}
			\|f^\mu\|_{L^{q'}_\mu L^p_\fh L^r_\fv}
			&\leq 
			\|f\|_{L^r_\fv  L^q_tL^p_\fh}, \quad \text{since } p\geq r, q.
		\end{align*}
		The result then follows once the convergence of the series in in \eqref{eq:spec-proj-estimateWave} is proved, which is done in Proposition \ref{tm:TT*-Htype}  thanks to $(m,p)\neq (1,2)$. 
	\end{proof}
	
	\begin{remark} Notice that the statement of Proposition~\ref{prop:TTstarWave} for the wave kernel coincides with the Schrodinger one. The main difference in the proof is the value of the exponent $\eta=\eta(m,d,r,p)$ as defined in \eqref{eq:ai}, which is different due to the different rescaling in \eqref{eq:wave-kernelH}. This does not affect the convergence of the series in \eqref{eq:spec-proj-estimateWave}.
		One may notice that a similar argument would work to get restriction estimates associated to different surfaces like those associated to other powers of the sub-Laplacian $(-\Delta_{H})^{s}$ and their corresponding kernels.
	\end{remark}
	
	\appendix
	
	\section{A few technical results} \label{s:app}
	This appendix contains a miscellanea of results.
	\subsection{Hausdorff-Young Inequality}
	\begin{lemma}[Hausdorff-Young Inequality for Mixed Lebesgue Spaces] \label{l:an-haus-young}
		Let $X$ and $Z$ be vector spaces and $f\in L^1(X\times Z)$. For $\lambda\in Z^*$, let 
		\begin{equation*}
			\cF_{z\to \lambda}f(x)=\int_{Z}e^{-i\langle\lambda, z\rangle}f(x, z)dz
		\end{equation*} be the Fourier transform of $f(x, z)$ in the last variable. Suppose $a, b\in [1, \infty]$ are such that  $b\leq \min\{a, a'\}$.
		Then
		\begin{equation}\label{eq:an-haus-young}
			\| \cF_{z\to \lambda}f\|_{L^{b'}(Z^*)L^{a'}(X)}\leq \|f\|_{L^b(Z)L^{a'}(X)}.
		\end{equation}
	\end{lemma}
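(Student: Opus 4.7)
The plan is to combine the scalar Hausdorff--Young inequality applied fibrewise in the $x$-variable with two applications of Minkowski's integral inequality in order to rearrange the mixed $L^p$ norms into the order demanded by the statement. The entire argument is three short lines chained together, and the only bookkeeping is to track which exponent sits outside and which sits inside in each mixed norm.

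First I would observe that the hypothesis $b \leq \min\{a,a'\}$ forces $b \leq 2 \leq b'$, since $\min\{a,a'\} \leq 2$ for every $a\in[1,\infty]$. Hence for each fixed $x \in X$ the classical scalar Hausdorff--Young inequality applied to $z \mapsto f(x,z)$ gives
\[
\|\cF_{z\to\lambda} f(x,\cdot)\|_{L^{b'}(Z^*)} \;\leq\; \|f(x,\cdot)\|_{L^{b}(Z)}.
\]
Taking the $L^{a'}(X)$ norm of both sides produces the intermediate estimate
\[
\big\|\|\cF_{z\to\lambda} f\|_{L^{b'}(Z^*)}\big\|_{L^{a'}(X)} \;\leq\; \big\|\|f\|_{L^{b}(Z)}\big\|_{L^{a'}(X)}.
\]

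Next I would invoke Minkowski's integral inequality on each side to match the ordering in the conclusion. On the left, the inequality $b \leq a$ yields $b' \geq a'$, which permits the Minkowski swap
\[
\|\cF_{z\to\lambda} f\|_{L^{b'}(Z^*) L^{a'}(X)} \;\leq\; \big\|\|\cF_{z\to\lambda} f\|_{L^{b'}(Z^*)}\big\|_{L^{a'}(X)}.
\]
On the right, the inequality $b \leq a'$ (the other half of the hypothesis) permits the swap
\[
\big\|\|f\|_{L^{b}(Z)}\big\|_{L^{a'}(X)} \;\leq\; \|f\|_{L^{b}(Z) L^{a'}(X)}.
\]
Concatenating these three inequalities delivers \eqref{eq:an-haus-young}.

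No serious technical obstacle is expected: the delicate point is purely notational, namely checking that both required directions of Minkowski's integral inequality are actually available, which is exactly what the two inequalities $b \leq a$ and $b \leq a'$ encode. The hypothesis $b \leq \min\{a,a'\}$ is therefore used in a perfectly symmetric way, one half for each of the two swaps.
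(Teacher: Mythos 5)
Your argument is correct and is essentially identical to the paper's own proof: the paper chains exactly the same three inequalities (Minkowski to move $a'$ outside, fibrewise scalar Hausdorff--Young using $b\le 2\le b'$, and Minkowski again using $b\le a'$), merely presenting them in the order of the final chain rather than stating the middle inequality first. Your explicit remark that $b\le\min\{a,a'\}\le 2$ guarantees $b\le 2\le b'$, and hence that the scalar Hausdorff--Young inequality applies, is a small clarification the paper leaves implicit.
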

	\begin{proof} It follows from 
		\begin{align*}
			\|\mathcal{F}_{z\to \lambda}f\|_{L^{b'}(Z^*)L^{a'}(X)}
			&\leq 
			\|\mathcal{F}_{z\to \lambda}f\|_{L^{a'}(X)L^{b'}(Z^*)}, \quad \text{since } a'\leq b'\\
			&\leq
			\|f\|_{L^{a'}(X)L^{b}(Z)}, \quad \text{by the usual Hausdorff-Young,}\\
			&\leq
			\|f\|_{L^{b}(Z) L^{a'}(X)},\quad \text{since } b\leq a',\qedhere
		\end{align*}
	\end{proof}
	
	\subsection{Frequency localized functions}
	Here $G$ is an $H$-type group.
	\begin{definition} \label{d:annulus}
		Let $f:G\to \C$ be a sufficiently regular Schwartz function on $G$. 
		\begin{itemize}
			\item Say $f$ is \textit{frequency localized in a ball} $\cB_\Lambda\subset \fv^*$ centered at zero of radius $\Lambda$ if there exists an even function $\psi\in \cD(\R)$ such that $\fv^*\ni\mu\mapsto \psi(|\mu|^2)$ is supported in $\cB_1$ and identically equal to $1$ near $0$  such that 
			\begin{equation}
				\psi(\Lambda^{-2}\Delta_H)f=f.
			\end{equation}
			\item Say $f$ is \textit{frequency localized in an annulus} $\cC_\Lambda$ centered at zero of small radius $\Lambda/2$ and large radius $\Lambda$ if there exist an even function $\phi\in \cD(\R)$ such that $\fv^*\ni\mu\mapsto \phi(|\mu|^2)$ is supported in $\cC_1$ and identially equal to $1$ on a smaller annulus $\cC'$ contained in $\cC_1$ such that
			\begin{equation}
				\phi(\Lambda^{-2}\Delta_H)f=f.
			\end{equation}
		\end{itemize}
	\end{definition}
	
	We now need the following lemma:
	\begin{lemma} \label{l:annulus}
		Let $\cB_\Lambda\subset \fv^*$ be a ball or radius $\Lambda$ and $\mathcal{C}_\Lambda\subset\fv^*$ be an annulus of large radius $\Lambda$ and small radius $\Lambda/2$.
		\begin{enumerate}
			\item 	If $f: G\to \C$ is frequency localized in $\cB_\Lambda$ then for all $1\leq p\leq q\leq \infty$, $k\in \N$ and $\beta\in \N^{2d}$, with $|\beta|=k$, there exists a constant $C_k>0$ depending only on $k$ such that for any left-invariant horizontal frame $(X_1, \ldots, X_{2d})$ in $\fh$, 
			\begin{equation}
				\|X^\beta f\|_{L^q(G)}\leq C_k\Lambda^{k+Q(\frac{1}{p}-\frac{1}{q})}\|f\|_{L^p(G)}.
			\end{equation}
			\item If $f: G\to \C$ is frequency localized in $\mathcal{C}_\Lambda$,  then for all $p\geq 1$ and $s\in \R$ there is a constant $C_s>0$ depending only on $s$ such that 
			\begin{equation}
				\frac{1}{C_s}\Lambda^s\|f\|_{L^p(G)}\leq \|(-\Delta_H)^{\frac{s}{2}}f\|_{L^p(G)}\leq C_s \Lambda^s\|f\|_{L^p(G)}.
			\end{equation}
		\end{enumerate}
	\end{lemma}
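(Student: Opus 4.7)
\textbf{Proof proposal for Lemma \ref{l:annulus}.}

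The strategy is standard Bernstein/spectral multiplier analysis on a stratified Lie group, adapted to the $H$-type setting. Let me set up a common framework. If $m \in \mathcal{D}(\R)$ is smooth and compactly supported, then by functional calculus the operator $m(\Delta_H)$ is given by right convolution with a kernel $k_m$, that is, $m(\Delta_H) f = f *_G k_m$. Since the sub-Laplacian is homogeneous of degree $2$ under the non-isotropic dilations $\delta_\Lambda$, a direct change of variables gives the scaling relation
\begin{equation*}
m(\Lambda^{-2} \Delta_H) f = f *_G k_{m,\Lambda}, \qquad k_{m,\Lambda}(x) = \Lambda^Q \, k_m(\delta_\Lambda x).
\end{equation*}
The one nontrivial input at this level is that $k_m$ is a Schwartz function on $G$; this is the content of Hulanicki's theorem for spectral multipliers of the sub-Laplacian on a stratified group, which applies in particular to $H$-type groups.

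For part $(1)$, I would write $X^\beta f = X^\beta \psi(\Lambda^{-2}\Delta_H) f = f *_G X^\beta k_{\psi,\Lambda}$, using that each $X_j$ is left-invariant and so commutes with right convolution. Each $X_j$ is homogeneous of degree $1$ under $\delta_\Lambda$, so $X^\beta k_{\psi,\Lambda}(x) = \Lambda^{Q+|\beta|} (X^\beta k_\psi)(\delta_\Lambda x)$, and hence
\begin{equation*}
\|X^\beta k_{\psi,\Lambda}\|_{L^r(G)} = \Lambda^{|\beta| + Q(1 - 1/r)} \|X^\beta k_\psi\|_{L^r(G)}.
\end{equation*}
Applying Young's convolution inequality on $G$ with $1/r = 1 + 1/q - 1/p$ and using $1 - 1/r = 1/p - 1/q$ gives
\begin{equation*}
\|X^\beta f\|_{L^q(G)} \leq \|f\|_{L^p(G)} \|X^\beta k_{\psi,\Lambda}\|_{L^r(G)} \leq C_k \Lambda^{|\beta| + Q(1/p - 1/q)} \|f\|_{L^p(G)},
\end{equation*}
with $C_k := \|X^\beta k_\psi\|_{L^r(G)} < \infty$ by the Schwartz property of $k_\psi$.

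For part $(2)$, since $\phi$ is supported in an annulus bounded away from the origin, both $\chi_+(\tau) := \tau^{s/2}\phi(\tau)$ and $\chi_-(\tau) := \tau^{-s/2}\phi(\tau)$ belong to $\mathcal{D}(\R)$ (extending them by $0$ at $\tau = 0$). Functional calculus and the factorization $\phi(\tau) = \chi_\pm(\tau)\cdot \tau^{\mp s/2}$ give, on the frequency localized function $f$,
\begin{equation*}
(-\Delta_H)^{s/2} f = (-\Delta_H)^{s/2}\phi(\Lambda^{-2}\Delta_H) f = \Lambda^{s} \chi_+(\Lambda^{-2}\Delta_H) f,
\end{equation*}
and symmetrically $f = \phi(\Lambda^{-2}\Delta_H) f = \Lambda^{-s}\chi_-(\Lambda^{-2}\Delta_H)(-\Delta_H)^{s/2} f$. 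Each $\chi_\pm(\Lambda^{-2}\Delta_H)$ is right convolution by a kernel of the form $\Lambda^Q k_{\chi_\pm}(\delta_\Lambda \cdot)$, whose $L^1$ norm is the scaling-invariant quantity $\|k_{\chi_\pm}\|_{L^1(G)}$. Young's inequality with $L^p \ast L^1 \to L^p$ then yields both directions of the equivalence with $C_s = \max(\|k_{\chi_+}\|_{L^1(G)}, \|k_{\chi_-}\|_{L^1(G)})$.

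The one nonroutine step is the Schwartz regularity of the convolution kernels $k_m$ for $m \in \mathcal{D}(\R)$; the rest is bookkeeping with Young's inequality and the homogeneity of $\Delta_H$ and the $X_j$'s. On $H$-type groups, where the Plancherel formula and spectral decomposition of $\Delta_H$ are explicit as recalled in Sections \ref{s:fheis} and \ref{s:frs}, this Schwartz property can be checked directly, but the cleanest route is to invoke Hulanicki's theorem as a black box.
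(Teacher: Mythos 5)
Your proof is correct and takes essentially the same approach as the paper: Hulanicki's theorem to get a Schwartz convolution kernel, the annulus support to absorb the powers $\tau^{\pm s/2}$ into a compactly supported multiplier, and Young's convolution inequality. The paper reduces to $\Lambda=1$ and invokes "a scaling argument" at the end, whereas you carry the dilation $\delta_\Lambda$ explicitly through the kernel via $k_{m,\Lambda}(x)=\Lambda^Q k_m(\delta_\Lambda x)$ — a harmless and slightly more self-contained bookkeeping choice.
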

	\begin{proof} We assume that $\Lambda=1$. The lemma for $\Lambda\neq 1$ is obtained by a scaling argument.
		
		To prove Part (1), suppose that $f$ is frequency localized in the ball $\cB_1$. In particular we have $f=\psi(\Delta)f$ for some even $\psi\in \cD(\R)$ supported in $[-1, 1]$ such that $\psi=1$ on a neighborhood of zero. By Hulanicki's theorem, there exists $\kappa\in \cS(G)$ such that 
		$$\psi(\Delta)f=f*\kappa.$$ 
		Thus, for any $\beta\in \N^{2d},$
		\begin{align*}
			X^\beta f=X^\beta(f*\kappa)=f*(X^\beta\kappa).
		\end{align*}
		By Young's Convolution Inequality, we have,
		\begin{align*}
			\|X^\beta f\|_{L^p(G)}\leq \|X^\beta\kappa\|_{L^r(G)}\|f\|_{L^q(G)}, \quad \frac{1}{r}+\frac{1}{q}=1+\frac{1}{p}.
		\end{align*}
		Note that $p\leq q$ if and only if $r\geq 1$. 
		Since $\kappa\in \cS(G)$, we have $\|X^\beta\kappa\|_{L^r(G)}<\infty$ for all $r\geq 1$. Part (1) then follows from a scaling argument. 
		
		For Part (2), suppose that $f$ is frequency localized in an annulus $\cC_1$. In particular, $f=\phi(\Delta)f$ for some even $\phi\in \cD(\R)$ supported in $[-1, -\tfrac{1}{4}]\cup[\tfrac{1}{4}, 1]$. Since $\phi$ is supported away from zero, the function $\phi_s(\mu):=\mu^{s/2}\phi(\mu)$ is in $\cD(\R)$ for all $s\in \R$. Therefore be Hulanicki's Theorem, there exists $\kappa_s\in \cS(G)$ such that 
		\begin{align*}
			\Delta^{s/2}f=\Delta^{s/2}\phi(\Delta)f=\phi_s(\Delta)f=f*\kappa_s. 
		\end{align*}
		By Young's Convolution Inequality,  with $\frac{1}{p}+\frac{1}{q}=1+\frac{1}{p}$ (which implies $q=1$), 
		\begin{align*}
			\|\Delta^{s/2}f\|_{L^p(G)}\leq \|\kappa_s\|_{L^1(G)}\|f\|_{L^p(G)}
		\end{align*}
		and $\kappa_s\in \cS(G)\subset L^1(G)$.
		Similarly, 
		\begin{align*}
			f=\Delta^{-s/2}\phi(\Delta)\Delta^{s/2}f=\phi_{-s}(\Delta)\Delta^{s/2}f=(\Delta^{s/2}f)*\kappa_{-s}.
		\end{align*}
		and therefore 
		\begin{align*}
			\|f\|_{L^p(G)}\leq \|\kappa_{-s}\|_{L^1(G)}\|\Delta^{s/2}f\|_{L^p(G)}
		\end{align*}
		and $\kappa_{-s}\in \cS(G)\subset L^1(G)$. The result follows from a scaling argument. 
	\end{proof}
	
	\section{The twisted (isotropic) Laplacian}\label{twisted-lap}

	Given $f\in S(\mathbb{H}_d)$ we denote $f^{\lambda}$ the Fourier transform in the vertical variable, namely
	\begin{equation}\label{eq:fourier_trans_central}
		f^{\lambda}(z)=\int_{\mathbb{R}}f(x, z)e^{-i\lambda t}dt.
	\end{equation}
	The twisted Laplacian is defined by the action of the sub-Laplacian on the $z$-Fourier modes of a function $f=f(x, z)$ on the Heisenberg group:
	$$(\Delta_{H} f)^{\lambda}=\Delta_{H}^{\lambda }f^{\lambda}$$
	Precisely, (where we let $y_i:=x_{i+d}$ for $i=1,\ldots, d$)
	\begin{align*}
		\Delta_{H}^{\lambda}:=\Delta_{\R^{2d}}+i\lambda\sum_{j=d}^n \left(y_j\frac{\partial}{\partial x_j}-x_j\frac{\partial}{\partial y_j}\right)+\frac{1}{4}|\lambda|^2\sum_{j=1}^d(x_j^2+y_j^2).
	\end{align*}
	
	The twisted Laplacian $\Delta^\lambda$ is  essentially self-adjoint and elliptic  on $S(\mathbb{C}^d)\subset L^2(\mathbb{C}^d)$ with a pure point spectrum consisting of eigenvalues $\{-|\lambda|(2k+d)\mid k\in \N\}$. In the sequel, we construct the corresponding eigenfunctions for $\Delta^\lambda$. 
	
	Following \cite[pg. 22]{thangaveluHarmonicAnalysisHeisenberg1998} let $	\Phi_\alpha^{\lambda}$ be the (multi) Hermite function on $\R^d$ defined by
	\begin{align}\label{eq_Hermite_func}
		\Phi_\alpha^{\lambda}(\xi):=\lambda^{\frac{n}{4}}h_{\alpha_1}(\lambda^{\frac{1}{2}}\xi_1)\cdots h_{\alpha_d}(\lambda^{\frac{1}{2}}\xi_d), \quad\xi\in \mathbb{R}^d,
	\end{align}
	where $h_k$ shall denote the $k$-th Hermite function on $\mathbb{R}$ normalized as follows
	\begin{align*}
		h_k(t)=(2^k\sqrt{\pi}k!)^{-\frac{1}{2}}H_k(t)e^{-\frac{1}{2}t^2}, \quad H_k(t)=(-1)^k\left(e^{t^2}\frac{d^k}{dt^k}\{e^{-t^2}\}\right).
	\end{align*}
	For $\alpha, \beta\in\N^d$ and $\lambda>0$, define the special Hermite functions let $\Phi_{\alpha\beta}^\lambda$ by
	\begin{equation}\label{def:spec-herm}
		\Phi_{\alpha\beta}^{\lambda}(x)=(2\pi)^{-\frac{d}{2}}\lambda^{\frac{d}{2}}\langle \pi_\lambda(x, 0)\Phi_\alpha^\lambda, \Phi_\beta^\lambda\rangle, \quad x\in \mathbb{C}^{d}.
	\end{equation}
	
	The special Hermite functions form a complete orthonormal system for $L^2(\C^d) $ \cite[pg.18]{thangaveluHarmonicAnalysisHeisenberg1998}, and are eigenfunctions of the twisted Laplacian $\Delta^{\lambda}$ with 
	\begin{equation}
		\Delta^\lambda \Phi_{\alpha\beta}^\lambda=-|\lambda|(2|\alpha|+n)\Phi_{\alpha\beta}^\lambda.
	\end{equation}
	Let $\Lambda_k^\lambda$ be the orthogonal projector onto the eigenspace of $\Delta^\lambda$  for the eigenvalue $-|\lambda|(2k+d)$:
	\begin{equation}
		\Lambda_k^\lambda g:=\sum_{|\alpha|=k}\sum_{\beta\in\N^d}\langle g, \Phi_{\alpha\beta}^\lambda\rangle\Phi_{\alpha\beta}^\lambda, \quad g\in L^2(\C^d).
	\end{equation}
	\begin{remark}
		The definition of $\Lambda_k^\lambda$ given above, differs from the orthogonal projector $\Lambda_k^\lambda$  defined in \cite{casarinoRestrictionTheoremMetivier2013} by a factor of $|\lambda|^d$. The normalization here guarantees 
		$$\Lambda_k^\lambda\circ\Lambda_k^\lambda=\Lambda_k^\lambda.$$
	\end{remark}
	The projection $\Lambda_k^\lambda$ can be written in a more explicit form as a twisted convolution with a Laguerre function.
	For $\lambda>0$ let $\times_\lambda$ be the $\lambda$-twisted convolution defined (as in \cite[pg. 16]{thangaveluHarmonicAnalysisHeisenberg1998}) by 
	$$g\times_\lambda h(x):=\int_{\C^d}g(x-w)h(w)e^{\frac{i\lambda}{2}\textrm{Im}(x\cdot \overline{w})}dw.$$
	Moreover, let $\varphi_k^\lambda$ be  the Laguerre function defined (as in  \cite[page 52]{thangaveluHarmonicAnalysisHeisenberg1998})  for $\lambda>0$ by
	\begin{equation}
		\varphi_k^\lambda(x)=L_k^{n-1}(\tfrac{1}{2}\lambda|x|^2)e^{-\frac{1}{4}\lambda|x|^2},
	\end{equation}
	where $L_k^{d-1}$ denotes the $k$-th Laguerre polynomial of generalized type $d-1$. 
	Note the difference of a factor of $\lambda^d$ with \cite[page 9]{niedorfSpectralMultiplierTheorem2024}. Then, since 
	\begin{equation}
		\Phi_{\alpha\beta}^\lambda(x)=\lambda^{\frac{d}{2}}\Phi^1_{\alpha\beta}(\lambda^{\frac{1}{2}}x),
	\end{equation}
	by definition of $\Phi_{\alpha\beta}^\lambda$,  equations
	(1.3.41) and (1.3.42) of  \cite[page 30]{thangaveluLecturesHermiteLaguerre1993} imply
	\begin{align*} 
		\varphi_k^\lambda=(2\pi)^{\frac{d}{2}}\lambda^{-\frac{d}{2}}\sum_{|\alpha|=k}\Phi_{\alpha\alpha}^\lambda.
	\end{align*}
	Hence by (2.1.5) of \cite[page 30]{thangaveluLecturesHermiteLaguerre1993}
	\begin{equation}
		\Lambda_k^\lambda g := (2\pi)^{-d}|\lambda|^d g\times_\lambda \varphi_k^\lambda
	\end{equation}

	The fact that $\Lambda_k^\lambda$ is a projection can be verified by the following identity (see (1.4.30) in \cite[page 21]{thangaveluLecturesHermiteLaguerre1993}) for $\lambda>0$
	$$\varphi_j^{\lambda}\times_\lambda\varphi_k^{\lambda}=(2\pi/\lambda)^d\delta_{jk}\varphi_j^{\lambda}.$$

	\subsection{Estimate on Twisted Spectral Projectors}
	In view of the following identity  $\Phi_{\alpha\beta}^\lambda(x)=\lambda^{\frac{d}{2}}\Phi_{\alpha\beta}(\lambda^{\frac{1}{2}}x)$ we have the homogeneity property
	\begin{equation}\label{homo-twist}
		\|\Lambda_k^\lambda\|_{p\to r}=\lambda^{d(\frac{1}{p}-\frac{1}{r})}\|\Lambda_k\|_{p\to r}.
	\end{equation}
	Theorem 1 of \cite{kochSpectralProjectionsTwisted2004} states the following:
	\begin{theorem} For $2\leq r\leq\infty$ we have the estimate
		\begin{equation}\label{eq:twisted-estimate}
			\|\Lambda_k\|_{L^2\to L^r}\lesssim_n (2k+d)^{\frac{1}{2}\varrho(r)},
		\end{equation}
		where
		\begin{equation} \label{eq:rho}
			\varrho(r)=\begin{cases}
				\frac{1}{r}-\frac{1}{2}, & \text{if } 2\leq r\leq \frac{2(2d+1)}{2d-1},\\
				2d\left(\frac{1}{2}-\frac{1}{r}\right)-1, & \text{if }\frac{2(2d+1)}{2d-1}\leq r\leq \infty
			\end{cases}.
		\end{equation}
	\end{theorem}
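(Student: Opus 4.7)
The plan is to prove the estimate by establishing three base cases and then interpolating between them, since the exponent $\varrho(r)$ is piecewise linear in $1/r$ with a single break at the critical Stein--Tomas exponent $r_* = 2(2d+1)/(2d-1)$.

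First, at $r=2$, the trivial bound $\|\Lambda_k\|_{L^2 \to L^2} = 1$ holds because $\Lambda_k$ is an orthogonal projector on $L^2(\mathbb{C}^d)$; this matches $\varrho(2)=0$. Next, at the endpoint $r=\infty$, I would exploit the twisted-convolution representation $\Lambda_k f = (2\pi)^{-d} f \times_1 \varphi_k$ with $\varphi_k(x) = L_k^{d-1}(|x|^2/2)e^{-|x|^2/4}$. Cauchy--Schwarz gives $|\Lambda_k f(x)| \leq (2\pi)^{-d}\|\varphi_k\|_{L^2}\|f\|_{L^2}$, and a direct computation from the orthogonality relation $\varphi_k \times_1 \varphi_k = (2\pi)^d \varphi_k$ combined with the value $\varphi_k(0) = L_k^{d-1}(0) = \binom{k+d-1}{d-1}\sim k^{d-1}$ yields $\|\varphi_k\|_{L^2}^2 \sim k^{d-1}$. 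This produces the bound $\|\Lambda_k\|_{L^2 \to L^\infty} \lesssim (2k+d)^{(d-1)/2}$, which matches $\varrho(\infty) = d-1$.

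The heart of the proof is the Stein--Tomas type estimate at the critical exponent,
\begin{equation*}
\|\Lambda_k\|_{L^2 \to L^{r_*}} \lesssim (2k+d)^{-1/(2(2d+1))}, \qquad r_* = \tfrac{2(2d+1)}{2d-1}.
\end{equation*}
By the $TT^*$ argument, this is equivalent to $\|\Lambda_k\|_{L^{r_*'} \to L^{r_*}} \lesssim (2k+d)^{-1/(2d+1)}$, i.e.\ to a sharp $L^p$ estimate for the twisted convolution by $\varphi_k$. The strategy I would use is to analyze $\varphi_k$ asymptotically in $k$. After the rescaling $x \mapsto (2k+d)^{1/2} x$, the Laguerre kernel concentrates near the sphere of radius $\sim 1$ in $\mathbb{C}^d \cong \mathbb{R}^{2d}$ and can, via the Hilb-type asymptotic formula for Laguerre functions, be written (modulo lower-order error terms) as an oscillatory kernel whose phase resembles the Fourier transform of surface measure on $S^{2d-1}$. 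Then the classical Stein--Tomas restriction theorem on the sphere in $\mathbb{R}^{2d}$, which delivers precisely the exponent $r_* = 2(2d+1)/(2d-1)$, transfers to give the required single-scale estimate on $\Lambda_k$ with the stated decay in $k$.

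Finally, Riesz--Thorin interpolation between the three endpoints $(2,0)$, $(r_*, -1/(2d+1))$ and $(\infty, d-1)$ yields the piecewise linear exponent $\tfrac12\varrho(r)$, with the two cases of $\varrho$ matching the two segments. The main obstacle is the middle step: the transfer of the spherical Stein--Tomas theorem to the Laguerre kernel. This requires careful control over the remainder terms in the Hilb-type asymptotics uniformly in $k$ on all relevant scales (the oscillatory core, the transition zone, and the exponentially decaying tail), and producing a Stein--Tomas estimate that is sharp in $k$ rather than merely integrable. Once this single-scale estimate is established, the trivial bounds and interpolation close out the proof.
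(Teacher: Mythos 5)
This statement is Theorem~1 of Koch--Tataru, quoted verbatim by the authors from \cite{kochSpectralProjectionsTwisted2004}; the paper contains no proof of it, so there is nothing internal to compare your argument against. Judged on its own terms, your outline is the standard restriction-theoretic route to such a projector bound, and the skeleton is sound. The two trivial endpoints are computed correctly: $\varrho(2)=0$ is immediate from orthogonality, and for $r=\infty$ the Cauchy--Schwarz bound $|\Lambda_k f(x)|\le (2\pi)^{-d}\|\varphi_k\|_{L^2}\|f\|_{L^2}$ together with $\|\varphi_k\|_{L^2}^2\sim k^{d-1}$ (from $\varphi_k\times_1\varphi_k=(2\pi)^d\varphi_k$ evaluated at the origin and $\varphi_k(0)=\binom{k+d-1}{d-1}$) gives exactly $(2k+d)^{(d-1)/2}=(2k+d)^{\frac12\varrho(\infty)}$. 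The exponent $\frac12\varrho(r)$ is affine in $1/r$ on each of the two segments and matches your three nodes at $r=2$, $r=r_*=2(2d+1)/(2d-1)$ and $r=\infty$, so Riesz--Thorin interpolation does close the argument once the middle node is established.

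The point where your proposal is genuinely incomplete is, as you flag, the single Stein--Tomas estimate $\|\Lambda_k\|_{L^2\to L^{r_*}}\lesssim(2k+d)^{-1/(2(2d+1))}$. Writing ``the classical Stein--Tomas theorem on the sphere in $\mathbb{R}^{2d}$ transfers'' glosses over several nontrivial issues: (a) after the rescaling $x\mapsto(2k+d)^{1/2}x$, the Laguerre kernel has an oscillatory region, a turning-point/Airy transition region, and an exponential tail, and the three must be estimated separately with uniform (in $k$) constants — in particular the Airy region has slower pointwise decay and one must verify it does not dominate; (b) the operator is a \emph{twisted} convolution, so after $TT^*$ one is bounding an integral operator with kernel $\varphi_k(x-w)\,e^{\frac{i}{2}\mathrm{Im}(x\cdot\bar w)}$, not an ordinary convolution, and the reduction to the spherical extension operator has to absorb this phase; (c) one needs the sharp $k$-dependence, not just finiteness, which requires the leading-order Hilb asymptotic to be supplemented by quantitative error bounds. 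None of these is fatal — this is essentially the route of Karadzhov's $L^2\to L^p$ bound and Thangavelu's treatment, and it does work — but as written your proposal is a program rather than a proof, and it would need a full section of Laguerre-asymptotic estimates to be rigorous. Since the paper simply cites Koch--Tataru for the result, carrying this out is orthogonal to the paper's contribution.
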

	\begin{remark}
		Note that
		\begin{align*}
			2d\left(\frac{1}{2}-\frac{1}{r}\right) =d\left(1-\frac{2}{r}\right)=d\left(\frac{1}{r'}-\frac{1}{r}\right).
		\end{align*}
	\end{remark}
	By duality, we also obtain for $1\leq p\leq 2$, 
	\begin{equation}\label{eq:twisted-estimate-dual}
		\|\Lambda_k\|_{p\to 2}\lesssim_d (2k+d)^{\frac{1}{2}\varrho(p')}.
	\end{equation}
	Using the fact that $\Lambda_k^2=\Lambda_k$ we combine \eqref{eq:twisted-estimate} and \eqref{eq:twisted-estimate-dual} to obtain the following corollary.
	\begin{corollary} \label{c:twisted-estimate-comb}
		For $1\leq p\leq 2\leq r\leq\infty$,
		\begin{equation}\label{eq:twisted-estimate-comb}
			\|\Lambda_k\|_{p\to r}\lesssim_d (2k+d)^{\frac{1}{2}(\varrho(p')+\varrho(r))}.
		\end{equation}
	\end{corollary}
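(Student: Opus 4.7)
The plan is straightforward: exploit that $\Lambda_k$ is an orthogonal projection, so $\Lambda_k = \Lambda_k \circ \Lambda_k$, and then factor the map $L^p \to L^r$ through $L^2$ using the two endpoint estimates \eqref{eq:twisted-estimate} and \eqref{eq:twisted-estimate-dual} already established.

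More precisely, for any $f \in L^p(\C^d) \cap L^2(\C^d)$, I would write
\begin{equation*}
\|\Lambda_k f\|_{L^r} = \|\Lambda_k (\Lambda_k f)\|_{L^r} \leq \|\Lambda_k\|_{L^2 \to L^r}\, \|\Lambda_k f\|_{L^2} \leq \|\Lambda_k\|_{L^2 \to L^r}\, \|\Lambda_k\|_{L^p \to L^2}\, \|f\|_{L^p}.
\end{equation*}
Applying \eqref{eq:twisted-estimate} with exponent $r$ (valid since $2 \leq r \leq \infty$) and \eqref{eq:twisted-estimate-dual} with exponent $p$ (valid since $1 \leq p \leq 2$) yields
\begin{equation*}
\|\Lambda_k\|_{L^p \to L^r} \lesssim_d (2k+d)^{\frac{1}{2}\varrho(r)} \cdot (2k+d)^{\frac{1}{2}\varrho(p')} = (2k+d)^{\frac{1}{2}(\varrho(p') + \varrho(r))},
\end{equation*}
which is exactly the claimed bound. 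A standard density argument then extends the estimate to all of $L^p(\C^d)$.

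There is essentially no obstacle here: the corollary is a pure composition of the two previously stated bounds, using the idempotency $\Lambda_k^2 = \Lambda_k$ (already observed in Section~\ref{twisted-lap} via the identity $\varphi_j^\lambda \times_\lambda \varphi_k^\lambda = (2\pi/\lambda)^d \delta_{jk} \varphi_j^\lambda$ when $\lambda = 1$). The only minor point to verify is that the range of $(p,r)$ assumed in the corollary is precisely where both endpoint estimates apply simultaneously, which is the case by hypothesis $1 \leq p \leq 2 \leq r \leq \infty$.
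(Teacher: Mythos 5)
Your proof is correct and coincides with the paper's argument: the authors state immediately before Corollary~\ref{c:twisted-estimate-comb} that one uses $\Lambda_k^2 = \Lambda_k$ to combine \eqref{eq:twisted-estimate} and \eqref{eq:twisted-estimate-dual}, which is exactly the factorization through $L^2$ that you carry out.
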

	\begin{remark}
		Note that when $r=p'$, the exponent becomes $\frac{1}{2}(\varrho(p')+\varrho(r))=\varrho(p')$.
	\end{remark}
	
	\begin{corollary}
		Suppose $1\leq p \leq 2(2d+1)/(2d+3)$. Then
		\begin{equation}\label{eq:spec-proj-bound2}
			\|\Lambda_k\|_{p\to p'}\lesssim_d (2k+d)^{d(\frac{1}{p}-\frac{1}{p'})-1}.
		\end{equation}
	\end{corollary}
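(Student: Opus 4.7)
The plan is to obtain this as an immediate consequence of Corollary~\ref{c:twisted-estimate-comb} combined with the algebraic identity built into the definition of $\varrho$. First I would apply Corollary~\ref{c:twisted-estimate-comb} with $r=p'$. Since the hypothesis $1\leq p\leq 2(2d+1)/(2d+3)<2$ guarantees $p\leq 2\leq p'$, the corollary is applicable, and the remark following it gives
\begin{equation*}
    \|\Lambda_k\|_{p\to p'}\lesssim_d (2k+d)^{\varrho(p')}.
\end{equation*}

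Next I would verify that the hypothesis on $p$ places $p'$ in the second case of the definition \eqref{eq:rho} of $\varrho$. Indeed, $p\leq 2(2d+1)/(2d+3)$ is equivalent (by passing to the dual exponent) to $p'\geq 2(2d+1)/(2d-1)$, so
\begin{equation*}
    \varrho(p')=2d\!\left(\tfrac{1}{2}-\tfrac{1}{p'}\right)-1.
\end{equation*}

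Finally, using the relation $\tfrac{1}{p}+\tfrac{1}{p'}=1$ one computes
\begin{equation*}
    2d\!\left(\tfrac{1}{2}-\tfrac{1}{p'}\right)=d\!\left(1-\tfrac{2}{p'}\right)=d\!\left(\tfrac{1}{p}-\tfrac{1}{p'}\right),
\end{equation*}
so $\varrho(p')=d(\tfrac{1}{p}-\tfrac{1}{p'})-1$, which yields the claimed estimate. There is really no obstacle here: the statement is a direct specialization of the previously established $L^p\to L^{p'}$ bound to the range where the Stein--Tomas-type exponent $\varrho(p')$ simplifies to the expression $d(\tfrac{1}{p}-\tfrac{1}{p'})-1$. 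The only thing to be careful about is keeping track of which case of \eqref{eq:rho} is active, which is why I would explicitly translate the hypothesis $p\leq 2(2d+1)/(2d+3)$ into the corresponding lower bound on $p'$ before invoking the formula.
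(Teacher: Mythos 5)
Your proof is correct and follows the same approach as the paper: apply Corollary~\ref{c:twisted-estimate-comb} with $r=p'$, observe that the hypothesis on $p$ forces $p'$ into the second branch of \eqref{eq:rho}, and simplify the exponent. The paper states this more tersely, but the content and route are identical.
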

	\begin{proof}
		If $1\leq p \leq 2(2d+1)/(2d+3)$ then $2(2d+1)/(2d-1)\leq p'\leq \infty$.
		Taking $r=p'$ in Equation \eqref{eq:twisted-estimate-comb} above yields the result.

		\end{proof}

				\section{Generalities on Spectral Projectors}\label{s:app2}

		\subsection{Spectral Projector for the sub-Laplacian on $H$-type groups}
		
		The Fourier transform on the Heisenberg group can be written for $\lambda\in \R\setminus\{0\}$ as 
		\begin{align*}
			\cF_{\heis}f(\lambda)=\int_{\R^{2d}}f^\lambda(x)\pi_\lambda(x, 0)^*dx.
		\end{align*}
		This maps a function $f\in \cS(\heis_d)$ to the operator on $L^2(\R^d)$ with Schwartz kernel 
		\begin{align*}
			K_\lambda^f(x, \xi)=\cF_{2, 3}f\left(\xi-x, \frac{\lambda}{2}(\xi+x), \lambda\right)
		\end{align*}
		where $\cF_{2, 3}$ denotes the Euclidean Fourier transform in the second and third variables. We therefore have the trace formula
		\begin{align*}
			\trace {\cF_\heis f(\lambda)}=(2\pi)^d|\lambda|^{-d} \int_{\R}f(0, 0, z)e^{-i\lambda z}dz
		\end{align*}
		The Schr\"{o}dinger representations on the $H$-type group $G$ are defined for $\lambda\in \fv^*\mz$ by
		\begin{align*}
			\pi_\lambda:=\pi_{|\lambda|}\circ\alpha_\lambda
		\end{align*}
		with $\alpha_\lambda$ defined in Lemma \ref{lem:projec} and $\pi_{|\lambda|}$ being the Schr\"{o}dinger representation on $\heis_d$ 
		\begin{align*}
			\pi_{|\lambda|}(x, y, z)\Phi(\xi)=e^{i|\lambda|(z+\xi\cdot y+\frac{1}{2}x\cdot y)}\Phi(\xi+x), \quad \Phi\in L^2(\R^d).
		\end{align*}
		One then computes, for $f\in \cS(G)$, 
		\begin{equation}
			\cF_G(f)(\lambda)=\int_{\R^{2d}}f^\lambda(T_\lambda x)\pi_{|\lambda|}(x, 0)^*dx
		\end{equation}
		with $T_\lambda$ defined in Lemma \ref{lem:projec}. We have the analogous trace formula
		\begin{equation}
			\trace{\cF_G f(\lambda)}=(2\pi)^{d}|\lambda|^{-d}f^\lambda(0).
		\end{equation}
		The Fourier inversion theorem on $\R^m\cong \fv^*$ leads to 
		\begin{equation}
			f(0, 0)=\frac{1}{(2\pi)^{d+m}}\int_{\R^{2d}}\trace{\cF_Gf(\lambda)}|\lambda|^d d\lambda.
		\end{equation}
		When applied to $\delta_{(x, z)^{-1}}*f$, we arrive at the Fourier inversion formula
		\begin{align*}
			f(x, z)=\frac{1}{(2\pi)^{d+m}}\int_{\fv^*}\trace{\pi_\lambda(x, z)\cF_Gf(\lambda)}|\lambda|^d d\lambda.
		\end{align*}
		Evaluating the following term:
		\begin{align*}
			\trace{\pi_\lambda(x, z)\cF_Gf(\lambda)}
			&=
			\sum_{\alpha\in \N^n}\int_G f(x', z')\langle \pi_\lambda(x, z)\pi_\lambda(x', z')^*\Phi^{|\lambda|}_\alpha, \Phi^{|\lambda|}_\alpha\rangle dx' dz'\\
			&=\sum_{\alpha\in \N^n}f*E_{\alpha\alpha}^\lambda(x, z)
		\end{align*}
		where $E_{\alpha\alpha}^\lambda(x, z)=\langle \pi_\lambda(x, z)\Phi^{|\lambda|}_\alpha, \Phi^{|\lambda|}_\alpha\rangle$.

		\begin{proof}[Proof of the Theorem refer to section 8]
			Identifying the coordinate $t$ with an element of the center Lie algebra $\fv$,   let $$\cF_\fv f(x, \eta):=f^\eta(x):=\int_{\fv}f(x, t)e^{-i\eta(z)}dz, \quad \eta\in \fv^*,$$ be the Euclidean Fourier transform in the $\fv$ variable. 
			The inversion formula is 
			\begin{equation}
				f(x, z)
				=\frac{1}{(2\pi)^m}\int_{\fv^*}\cF_\fv f(x, \eta)e^{i\eta(z)}d\eta, \quad f\in \cS(G).
			\end{equation}
			Writing the inversion formula for  $\cF_\fv$ in polar coordinates gives 
			\begin{align}\label{fourier-inv-central}
				f(x, z)
				=\frac{1}{(2\pi)^m}\int_0^\infty\int_{S}\cF_\fv f(x, \rho\omega)e^{i\rho\omega(z)}d\sigma(\omega)\rho^{m-1} d\rho.
			\end{align}

			For fixed $\eta\in \fv^*$ and $f\in \cS(G)$ , $\cF_\fv f(x, \eta)$ is a Schwartz function in $\cS(\R^{2d})$. We may therefore use the spectral decomposition of the twisted Laplacian on $\R^{2n}$ in Equation \eqref{spec-res-twisted} to write
			\begin{equation}\label{spec-fourier}
				\cF_\fv f(x, \eta)=\sum_{k\in \N}\Lambda_k^{|\eta|} f^{\eta}(x).
			\end{equation}
			Combining \eqref{spec-fourier} and \eqref{fourier-inv-central} 
			\begin{equation}
				f(x, z)=\frac{1}{(2\pi)^m}\sum_{k=0}^\infty\int_0^\infty\left(\int_S\Lambda_k^\rho f^{\rho\omega}(x)e^{i\rho\omega(z)}d\sigma(\omega)\right) \rho^{m-1}d\rho
			\end{equation}
			Now perform the substitution $\mu=\rho(2k+d)$ to obtain
			\begin{equation}
				f(x, z)=\frac{1}{(2\pi)^m}\sum_{k=0}^\infty\frac{1}{(2k+d)^{m}}\int_0^\infty\left(\int_S\Lambda_k^{\mu_k} f^{\mu_k\omega}(x)e^{i\mu_k\omega(z)}d\sigma(\omega)\right) \mu^{m-1}d\mu
			\end{equation}
			with $\mu_k:=\mu/(2k+d)$. This proves the first and second claims. The final claim follows from the fact that 
			\begin{align*}
				\Lambda_k^{\mu_k}\cF_{\fv}(\Delta f)(x, \mu_k\omega)
				&=\Lambda_k^{\mu_k}(\Delta^{\mu_k\omega} \cF_\fv f)(x, \mu_k\omega)\\
				&=\mu_k|\omega|(2k+d)\Lambda_k^{\mu_k}\cF_\fv f(x, \mu_k\omega)\\
				&=\mu\Lambda_k^{\mu_k}\cF_\fv f(x, \mu_k\omega)
			\end{align*} 
			To see that  \eqref{eq:spec_proj_alt} is equivalent to \eqref{eq:spec-proj-Htype}, note that
			\begin{align*}
				f*e^{\rho_k\omega}_k(x, z)=e^{i\rho_k\omega(z)}f^{\rho_k\omega}\times_\lambda \varphi_k^{\rho_k}(x)=(2\pi)^d\rho_k^{-d}e^{i\rho_k\omega(z)}\Lambda_k^{\rho_k}f^{\rho_k\omega}(x).
			\end{align*}
		\end{proof}
		\begin{remark}
			Note that, compared with Paolo et al's notation \cite[Theore 4.8]{casarinoRestrictionTheoremMetivier2013}, some powers of $\mu$ and $(2k+d)$ have been subsumed into the projector $\Lambda_k^{\mu_k}$.
		\end{remark}

		\begin{theorem}\cite{casarinoRestrictionTheoremMetivier2013}\label{th:bound_spec_proiec_Htype}
			Suppose $1\leq r\leq p_*(m)=2\frac{m+1}{m+3}$. If the projections $\Lambda_k$ are bounded from $L^p(\fh)$ to $L^{p_1}(\fh)$ (here $\fh\cong \R^{2d}$), with $1\leq p\leq 2\leq {{p_1}}\leq \infty$,  then there exists $C=C(m, r)>0$ such that
			\begin{align*}
				\|\cP_\mu^{\Delta}f\|_{L^{{p_1}}(\fh){L^{r'}(\fv)}}
				&\leq C \mu^{m(\frac{1}{r}-\frac{1}{r'})+d(\frac{1}{p}-\frac{1}{{{p_1}}})-1}\\
				&\times \left(\sum_{k=0}^\infty(2k+d)^{-m(\frac{1}{r}-\frac{1}{r'})-d(\frac{1}{p}-\frac{1}{{{p_1}}})}\|\Lambda_k\|_{L^p(\fh)\to L^{{p_1}}(\fh)}\right)\|f\|_{L^p(\fh)L^r(\fv)}.
			\end{align*}
		\end{theorem}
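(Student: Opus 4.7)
The plan is to unpack the spectral decomposition \eqref{eq:spec-proj-Htype}, recognize the inner spherical integral as a Fourier extension from a sphere of radius $\mu_k := \mu/(2k+d)$ in $\fv^* \cong \R^m$, and combine the Stein--Tomas theorem on $\R^m$ with the hypothesized $L^p \to L^{p_1}$ bound for $\Lambda_k$ on $\fh$.

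Concretely, setting $g_k(x,z) := \Lambda_k^{\mu_k}[f(\cdot, z)](x)$ and using that $\Lambda_k^{\mu_k}$ acts only in the $\fh$-variable (hence commutes with the $\fv$-Fourier transform), the inner integral
$$I_k(x,z) := \int_S e^{i\mu_k\omega(z)}\, \Lambda_k^{\mu_k} f^{\mu_k\omega}(x)\, d\sigma_S(\omega)$$
is nothing but $(g_k(x,\cdot) *_\fv W_{\mu_k})(z)$, where $W_{\mu_k}(w) = \int_S e^{i\mu_k\omega\cdot w}\,d\sigma_S(\omega) = W_1(\mu_k w)$ is (a rescaling of) the Fourier transform of surface measure on the unit sphere. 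The hypothesis $r \leq 2(m+1)/(m+3)$ is precisely the Stein--Tomas endpoint, so the $TT^*$ formulation gives $\|h * W_1\|_{L^{r'}(\R^m)} \leq C\,\|h\|_{L^r(\R^m)}$; a change of variable $z \mapsto z/\mu_k$ combined with $W_{\mu_k}(w)=W_1(\mu_k w)$ yields the rescaled version
$$\|h *_\fv W_{\mu_k}\|_{L^{r'}(\fv)} \leq C\, \mu_k^{m(1/r-1/r') - m}\,\|h\|_{L^r(\fv)}.$$

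Applying this pointwise in $x$ and then integrating in $L^{p_1}(\fh)$ yields $\|I_k\|_{L^{p_1}_\fh L^{r'}_\fv} \leq C\mu_k^{m(1/r-1/r')-m}\|g_k\|_{L^{p_1}_\fh L^r_\fv}$. Since $r \leq 2 \leq p_1$, Minkowski's integral inequality permits the swap $\|g_k\|_{L^{p_1}_\fh L^r_\fv} \leq \|g_k\|_{L^r_\fv L^{p_1}_\fh}$; bounding the inner norm pointwise in $z$ via the hypothesis and the scaling identity \eqref{homo-twist} $\|\Lambda_k^{\mu_k}\|_{L^p \to L^{p_1}} = \mu_k^{d(1/p-1/p_1)}\|\Lambda_k\|_{L^p\to L^{p_1}}$ produces
$$\|I_k\|_{L^{p_1}_\fh L^{r'}_\fv} \leq C\, \mu_k^{m(1/r-1/r')+d(1/p-1/p_1)-m}\, \|\Lambda_k\|_{L^p\to L^{p_1}}\, \|f\|_{L^r_\fv L^p_\fh}.$$

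The final step is arithmetic bookkeeping: multiplying by the prefactor $\mu^{m-1}/(2k+d)^m$ from \eqref{eq:spec-proj-Htype}, expanding $\mu_k = \mu/(2k+d)$, and summing over $k$ consolidates the exponents into $\mu^{m(1/r-1/r')+d(1/p-1/p_1)-1}\cdot (2k+d)^{-m(1/r-1/r')-d(1/p-1/p_1)}$, producing the claimed estimate. The main technical obstacle is the careful tracking of the powers of $\mu_k$ in the rescaled Stein--Tomas step (where the spherical and spatial rescalings enter together); a secondary delicate point is the ordering of the final norm between $L^p_\fh L^r_\fv$ and $L^r_\fv L^p_\fh$, which in the applications of Propositions~\ref{prop:TTstarintro} and \ref{prop:TTstarintro2} is handled by the additional hypothesis $r\leq p$ allowing one more Minkowski swap.
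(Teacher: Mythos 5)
The paper does not actually prove this statement itself; it is imported verbatim from Casarino--Ciatti~\cite{casarinoRestrictionTheoremMetivier2013} with no in-paper argument, so there is no internal proof to compare against. Your proposed proof follows what is indeed the natural route (and is essentially the Casarino--Ciatti strategy): pull $\Lambda_k^{\mu_k}$ out of the spherical integral in \eqref{eq:spec-proj-Htype}, recognize the $\omega$-integral as convolution in $\fv$ with the rescaled Fourier transform of surface measure on $S^{m-1}\subset\fv^*$, invoke Stein--Tomas at the endpoint $r\leq 2(m+1)/(m+3)$ via its $TT^*$ form, and then use the hypothesized $L^p\to L^{p_1}$ bound for $\Lambda_k$ together with the scaling identity \eqref{homo-twist}. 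The scaling computation for $\|h*W_{\mu_k}\|_{r'}\lesssim \mu_k^{m(1/r-1/r')-m}\|h\|_r$ is correct, and the bookkeeping of the powers of $\mu_k$ and $(2k+d)$ consolidates into exactly the exponents appearing in the statement.

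There is, however, a genuine gap in the last step, which you notice but misdiagnose. Your chain yields $\|\cP_\mu^{\Delta}f\|_{L^{p_1}_\fh L^{r'}_\fv}\lesssim\|f\|_{L^r_\fv L^p_\fh}$ with the $\fv$-norm \emph{outer} on the right, whereas the theorem has $\|f\|_{L^p(\fh)L^r(\fv)}$ with the $\fh$-norm outer. You claim the mismatch is fixed by ``one more Minkowski swap using $r\leq p$,'' but Minkowski runs the wrong way: for $r\leq p$ one has $\|f\|_{L^p_\fh L^r_\fv}\leq\|f\|_{L^r_\fv L^p_\fh}$, so your estimate has the \emph{larger} quantity on the right and is therefore strictly weaker than the stated theorem, not equivalent to it. The problem enters precisely where you swap $\|g_k\|_{L^{p_1}_\fh L^r_\fv}\leq\|g_k\|_{L^r_\fv L^{p_1}_\fh}$ in order to apply the scalar $\Lambda_k^{\mu_k}$ bound pointwise in $z$; to preserve the stated ordering one instead needs the vector-valued extension $\Lambda_k^{\mu_k}\otimes I: L^p(\fh;L^r(\fv))\to L^{p_1}(\fh;L^r(\fv))$ with the same operator norm, which is automatic (by two Minkowski applications) only when $p\leq r\leq p_1$ --- a constraint the theorem does not impose, since it allows any $p\in[1,2]$ while $r<2$. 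That said, your weaker inequality with $\|f\|_{L^r_\fv L^p_\fh}$ is in fact all that the downstream arguments in Propositions~\ref{prop:TTstarintro} and \ref{prop:TTstarintro2} require, since from $\|f^\mu\|_{L^{q'}_\mu L^r_\fv L^p_\fh}$ one can still reach $\|f\|_{L^r_\fv L^q_t L^p_\fh}$ using $q'\geq r,p$ and $p\geq q$; so the sketch supports the paper's conclusions even though it does not reproduce the cited statement exactly as written.
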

		\begin{remark}
			Note that in \cite{casarinoRestrictionTheoremMetivier2013} the order of the order of the Lebesque norms in the subscript are revered. The difference is only in notation. 
		\end{remark}
		
		\section{The inhomogeneous case}\label{s:appino}
		The argument is standard. For completeness we report it here.
		Denoting by $(\mathcal U(t))_{t\in \R} $ the solution operator of the Schr\"odinger equation, namely $\mathcal U(t)u_0$ is the solution with $f=0$ at time $t$ associated with the data~$u_0$, then similarly to the Euclidean case   $(\mathcal U(t))_{t\in \R} $  is a one-parameter group of unitary operators on $L^2$. Moreover,   the solution to the inhomogeneous  equation  
		$$ \left\{
		\begin{array}{c}
			i\partial_t u -\Delta u = f\\
			u_{|t=0} = 0\,,
		\end{array}
		\right. $$
		writes 
		\begin{equation}
			\label{estlinh} 
			u(t, x,z)= -i \int^t_0 \mathcal U(t-t') f(t', x,z) dt'  ,
		\end{equation}
		Let us check that it satisfies, for all  admissible pairs $(p,q)$ 
		\begin{equation}
			\label{estlinhst} \|u\|_{L^r_s L_t^{q} L^{p}_{Y}} \leq  C \|f \|_{L_t^1 H^{\sigma}} 
		\end{equation}
		with $ \sigma= \frac Q2 - \frac2q-\frac{2d}p$ 
		
		Let us first assume that, for all $t$,  the source term $f(t, \cdot) $ is frequency localized in  in  the unit ball~${\mathcal B}_1$ in the sense of Definition \ref{d:annulus}, and  recall that according to the results, if $g$ is frequency localized in a unit ball, then  for all~$2\leq p \leq q \leq \infty$ 
		\begin{equation}
			\label{estl2}
			\|\, \mathcal{U}(\cdot)g\|_{L^r_\fv L^{q}_t L^{p}_{\fh}} \leq C \|  g \|_{L^{2}} \, .
		\end{equation}
		Taking advantage of \eqref{estlinh}, we have 
		for all $z$, $$\|u(t, \cdot, z) \|_{L^{p}_{\fh}} \leq \int_\R \|\mathcal  U(t) \mathcal U(-t') f(t', \cdot, z) \|_{L^{p}_{\fh}} dt' . $$
		Therefore, still  for all $z$,
		$$\|u(\cdot,\cdot, z) \|_{L_t^{q} L^{p}_{\fh}} \leq \int_\R   \|\mathcal  U(\cdot) \mathcal U(-t') f(t', \cdot, z) \|_{L_t^{q} L^{p}_{\fh}} dt' .$$ 
		Invoking \eqref{estl2}, we deduce that  
		$$\|u\|_{L^r_\fv L^{q}_t L^{p}_{\fh}} \leq C\int_\R \|\mathcal U(-t') f(t', \cdot) \|_{{L^{2}}} dt' .$$ 
		Since   $\mathcal U(-t')$ is unitary on $L^{2}$, we readily gather that 
		\begin{equation}\label{estlinhgamma} 
			\|u \|_{L^r_\fv L^{q}_t L^{p}_{\fh}} \leq C\int_\R \|f(t', \cdot) \|_{{L^{2}}} dt' .
		\end{equation}
		Now if for all $t$,~$f(t, \cdot)$ is frequency localized in a ball of size $\Lambda$, then setting
		$$ f_{\Lambda}(t, \cdot) = \Lambda^{-2} f (\Lambda^{-2} t, \cdot) \circ \delta_{\Lambda^{-1}} $$
		we find that on the one hand, $f_{\Lambda}(t, \cdot)$ is frequency localized in a unit ball for all $t$,  and  on the other hand  that the solution to the Cauchy problem 
		$$
		\left\{
		\begin{array}{c}
			i\partial_t u_{\Lambda} -\Delta u_{\Lambda} = f_{\Lambda}\\
			u_{|t=0} = 0\,,
		\end{array}
		\right. $$
		writes
		$$u_{\Lambda}(t,w)= u (\Lambda^{-2} t, \cdot) \circ \delta_{\Lambda^{-1}} \,.$$
		Now by scale invariance, we have
		$$\int_\R \|f_{\Lambda}(t', \cdot) \|_{{L^{2}}} dt' = \Lambda^{\frac{Q}2}\int_\R \|f(t', \cdot) \|_{{L^{2}}} dt' $$
		and 
		$$
		\|u_{\Lambda}\|_{L^r_s L_t^{q} L^{p}_{Y}}  =\Lambda^{\frac{2} {q} + \frac{2d} {p}+\frac{2m}{r}}  \|u\|_{L^r_s L_t^{q} L^{p}_{Y}} \, .
		$$
		Consequently, we get 
		$$ 
		\|u\|_{L^r_s L_t^{q} L^{p}_{Y}} \leq C  \int_\R \Lambda^{\frac{Q}2- \frac{2} {q} - \frac{2d} {p}-\frac{2m}{r}} \|f(t', \cdot) \|_{{L^{2}}} dt' \, .
		$$ 
		Since  $\sigma= \frac{Q}2- \frac{2} {q} - \frac{2d} {p}-\frac{2m}{r} \geq 0$,  we have 
		$$\Lambda^{\frac{Q}2- \frac{2} {q} - \frac{2d} {p}-\frac{2m}{r}} \|f(t', \cdot) \|_{{L^{2}}} \lesssim \|f(t', \cdot) \|_{{H^{\sigma}}} \,, $$ which  completes the proof of \eqref{estlinhst}.

		\bibliographystyle{alphaabbrv}
		\bibliography{Strichartz-H-type}
		
	\end{document}